\author{Mathieu Molitor
\thanks{Present address: \it{Fakult\"{a}t f\"{u}r Mathematik, Ruhr-Universit\"{a}t Bochum, Germany}}\\
\it \small{Department of Mathematics, Keio University}\\
\it \small{3-14-1, Hiyoshi, Kohoku-ku, 223-8522, Yokohama, Japan}\\ 
\small{\it{e-mail:}}\,\,\url{pergame.mathieu@gmail.com}
}
\title{Exponential families, K\"{a}hler geometry and quantum mechanics}
\date{}
\begin{document}
\newtheorem{lemma}{Lemma}[section]
\newtheorem{definition}[lemma]{Definition}
\newtheorem{proposition}[lemma]{Proposition}
\newtheorem{corollary}[lemma]{Corollary}
\newtheorem{theorem}[lemma]{Theorem}
\newtheorem{remark}[lemma]{Remark}
\newtheorem{example}[lemma]{Example}
\bibliographystyle{alpha}

\maketitle 

\begin{abstract}
	Exponential families are a particular class of statistical manifolds which are particularly important 
	in statistical inference, and which appear very frequently in statistics. For example, the set of normal distributions,
	with mean $\mu$ and deviation $\sigma\,,$ form a 2-dimensional exponential family. 

	In this paper, we show that the tangent bundle of an exponential family is naturally a K\"{a}hler manifold. 
	This simple but crucial observation leads to the formalism of quantum mechanics in its geometrical 
	form, i.e. based on the K\"{a}hler structure of the complex projective space, but generalizes also to more general 
	K\"{a}hler manifolds, providing a natural geometric framework for the description of quantum systems. 

	Many questions related to this ``statistical K\"{a}hler geometry" are discussed, and a close connection 
	with representation theory is observed. 

	Examples of physical relevance are treated in details. 
	For example, it is shown that the spin of a particle can be entirely understood by means of 
	the usual binomial distribution. 

	This paper centers on the mathematical foundations of quantum mechanics, and on the question 
	of its potential generalization through its geometrical formulation. 
\end{abstract}


\section{Introduction -- summary}
	
	In the 70's, it has been observed by Chernoff and 
	Marsden \cite{Chernoff-Marsden} that the Schr\"{o}dinger 
	equation $i\hbar \frac{d\psi}{dt}=H\psi$ is Hamiltonian with respect to the 
	symplectic form coming from the imaginary part of the Hermitian scalar product of the Hilbert space 
	$\mathcal{H}$ of possible quantum states. Since then, this Hamiltonian view on quantum mechanics 
	has been developed independently by several 
	authors \cite{Cirelli-Hamiltonian,Cirelli-Quantum,Heslot-Une,Heslot-Quantum,Kibble} and has led to 
	a complete geometrization of the quantum formalism, entirely based on the K\"{a}hler 
	properties of the complex projective space $\mathbb{P}(\mathcal{H})\,.$ This reformulation, which is 
	very elegant and complete, is now usually 
	referred to as the \textit{geometrical formulation}\footnote{Not to be 
	confused with the \textit{geometric quantization} of Kostant and Souriau \cite{Kostant,Souriau}. In the 
	geometrical formulation, the Hilbert space is considered as given, not as the result of a quantization scheme.} 
	of quantum mechanics \cite{Ashtekar,Cirelli-Quantum}.

	The geometrical formulation was mainly motivated by the desire to generalize quantum mechanics, 
	especially in view of quantum gravity. The basic idea is that, by geometrizing the quantum formalism, one frees it 
	from its burdensome linearity and put it on a geometrical ground akin to Einstein's theory of gravitation. Geometry is, 
	in this regard, particularly ``flexible", and seems an appropriate setting for generalizations. 
	For example, while it is not clear how to generalize Hilbert spaces, 
	generalizations of the complex projective space is straightforward: instead of $\mathbb{P}(\mathcal{H})\,,$ 
	take an arbitrary K\"{a}hler manifold. Such possibilities have been discussed in 
	\cite{Gibbons-Typical,Hughston,Kawamura,Kibble} and applications 
	towards quantum gravity have been proposed in \cite{Tze-Background,Tze-General}. 
	
	These proposals, however, are limited in their original scope by a severe limitation: 
	the need to allow for a \textit{probabilistic interpretation}. The latter, contrary to the 
	quantum state space and dynamics, is extremely difficult to generalize in a purely geometrical context and is 
	usually not even discussed. The reason is that in the geometrical formulation, all formulas related to probabilities 
	rely on an expression of the form $\cos^{2}\big(d(\,,\,)\big)\,,$ where $d(\,,\,)$ is the geodesic 
	distance on $\mathbb{P}(\mathcal{H})\,,$ expression which is clearly specific to $\mathbb{P}(\mathcal{H})$ 
	and which, consequently, cannot be generalized directly to arbitrary K\"{a}hler manifolds. 

	These difficulties --related to the probabilistic interpretation-- address the following question: 
	\textit{what is the link between the K\"{a}hler structure of 
	$\mathbb{P}(\mathbb{C}^{n})$ and probabilities?} This question, which is central in the present work, 
	was already formulated in our previous paper \cite{Molitor-quantique} where an interesting, though 
	puzzling connection with \textit{information geometry} has been observed.

	Let us recall, in this regard, that information geometry is a branch of statistics characterized by its use of differential 
	geometrical techniques \cite{Amari-Nagaoka,Murray-Rice}. Its basic objects of study are \textit{statistical manifolds}, i.e. 
	manifolds whose points can be identified with probability density functions over some fixed measured space. 
	For example, Gaussian distributions over $\mathbb{R}$ form a 2-dimensional statistical manifold parameterized by the mean 
	$\mu$ and deviation $\sigma\,.$ In general --and this is what information geometry is about-- a statistical manifold $S$ 
	possesses a rich geometry that encodes many of its statistical properties; it has a Riemannian metric 
	$h_{F}\,,$ called \textit{Fisher metric}, and a pair of dual affine connections $\nabla^{(e)}\,,\nabla^{(m)}\,,$ 
	respectively called \textit{exponential connection} and 
	\textit{mixture connection}, which can be used, for example, to give lower bounds in estimation problems (compare 
	e.g. the Cram\'{e}r-Rao inequality). Together, the triplet $(h_{F},\nabla^{(e)},\nabla^{(m)})$ forms what is 
	called a \textit{dualistic structure}, and it is probably the most important structure in information geometry. 

	Very little attention has been paid to dualistic structures outside the statistical community, but we can mention, 
	in connection with \cite{Molitor-quantique}, the work of Dombrowski\footnote{See also \cite{Nomizu-Simon}.}. 
	In a paper which already goes back to the 
	60's \cite{Dombrowski}, Dombrowski shows that if a manifold $M$ is endowed with a dualistic structure 
	($M$ needs not be a statistical manifold here), then
	its tangent bundle $TM$ becomes naturally, via a simple geometric construction, an almost Hermitian 
	manifold\footnote{In \cite{Dombrowski}, Dombrowski is not explicitly using the language of dualistic structures, and 
	his main concern is on the analytical properties of the almost complex structure that he constructs on the tangent 
	bundle $TM$ of a Riemannian manifold $(M,g)$ endowed with a connection $\nabla\,.$}.
	A direct consequence of Dombrowski's construction, which seems to have been 
	unnoticed in the existing literature, is that the tangent bundle of a statistical manifold is 
	canonically an almost Hermitian manifold. 

	This observation, although mathematically very simple, is one of the most important of \cite{Molitor-quantique}. 
	It tells us that \textit{statistics abounds with almost Hermitian manifolds}. 
	To illustrate this, let us consider what is probably the 
	most simple example that one may think of. Take a finite set 
	$\Omega:=\{x_{1},...,x_{n}\}$ and consider the space 
	$\mathcal{P}_{n}^{\times}$ of nowhere vanishing\footnote{The condition $p>0$ (instead of $p\geq 0$) is purely technical and 
	ensures that $\mathcal{P}_{n}^{\times}$ has no boundary nor corners.} 
	probabilities $p\,:\,\Omega\rightarrow \mathbb{R}\,,$ $p>0\,,$ 
	$\sum_{k=1}^{n}\,p(x_{k})=1\,.$ This is a $(n{-}1)$-dimensional statistical manifold, therefore its tangent bundle 
	$T\mathcal{P}_{n}^{\times}$ is an almost Hermitian manifold. Now the main observation in
	\cite{Molitor-quantique} may be formulated as follows:
	\textit{the canonical almost Hermitian structure of $T\mathcal{P}_{n}^{\times}$ is locally isomorphic to 
	the K\"{a}hler structure of $\mathbb{P}(\mathbb{C}^{n})\,.$}

	This result is intriguing. On one hand, it establishes a link 
	between the geometrical formulation of quantum mechanics 
	and information geometry, and suggests a possible information-theoretical origin of the quantum formalism.
	But on the other hand, the statistical relevance of Dombrowski's construction
	is not at all clear, and since $\mathcal{P}_{n}^{\times}$ is the only example in \cite{Molitor-quantique} for 
	which explicit computations are performed, their are a priori no reasons for other statistical manifolds to yield interesting 
	geometrical results of physical importance.

	As such, the results in \cite{Molitor-quantique} are potentially fruitful, 
	but they raise many questions that need further investigations.  \\
	
	In the present paper, we developed some 
	of the ideas of \cite{Molitor-quantique} and 
	present mathematical results --at the crossroad of information geometry, K\"{a}hler 
	geometry, functional analysis and, to some extent, representation theory-- 
	which reinforce the idea that the quantum formalism 
	has a statistical and information-theoretical origin. In particular, we want to describe a mechanism by which 
	K\"{a}hler geometry emerges from information geometry, and to explain, by revisiting the geometrical formulation, 
	how the latter, in its various aspects, can be understood within this larger information-theoretical setting. 

	A central role, in this development, is played by the so-called \textit{exponential families}. 
	Exponential families are a particular class of probability distributions which plays a key role 
	in various ramifications of statistics, and especially in the 
	context of statistical inference (see for example \cite{Amari-Nagaoka}). Their 
	importance stems from the fact that among all possible parameterized statistical models, 
	they are the only ones having \textit{efficient 
	estimators}, meaning roughly that it is possible, given sample data, to estimate 
	the unknown parameters of the model 
	in the ``best possible way". 
	Examples of exponential families are found among the most common probability distributions: 
	\textit{Bernoulli, beta, binomial, chi-square, Dirichlet, exponential, gamma, geometric, multinomial, 
	normal, Poisson,} to name but just a few. 

	For us, the important property of an exponential family $\mathcal{E}$ is that its canonical almost Hermitian 
	structure (on $T\mathcal{E}$) is always a K\"{a}hler structure (Corollary \ref{corollary encore que dire?}), 
	allowing us to define, via some refinements of Dombrowski's arguments, what we shall call 
	the \textit{K\"{a}hlerification} of an exponential family, denoted $\mathcal{E}^{\mathbb{C}}$ 
	(see \S\ref{section il y en a pas deux}, Definition \ref{definition kahlerification}). By construction, a 
	K\"{a}hlerification $\mathcal{E}^{\mathbb{C}}$ is 
	a K\"{a}hler manifold, and it always comes with a Riemannian submersion $\pi_{\mathcal{E}}\,:\,\mathcal{E}^{\mathbb{C}}\rightarrow
	\mathcal{E}\,.$ As an important example, the K\"{a}hlerification of 
	$\mathcal{P}_{n}^{\times}$ yields $\mathbb{P}(\mathbb{C}^{n})^{\times},$ i.e. 
	$(\mathcal{P}_{n}^{\times})^{\mathbb{C}}\cong \mathbb{P}(\mathbb{C}^{n})^{\times}\,,$
	where $\mathbb{P}(\mathbb{C}^{n})^{\times}:=\big\{[z_{1},...,z_{n}]\in \mathbb{P}(\mathbb{C}^{n})\,\vert\,
	z_{k}\neq 0\,\,\,\textup{for all}\,\,k=1,...,n\big\}$ (we use homogeneous coordinates, 
	see \S\ref{section the motivating example} and Proposition \ref{proposition le bon group indeed}). 
	Other classical K\"{a}hler manifolds can be realized as the 
	K\"{a}hlerification of appropriate exponential families, like $\mathbb{C}^{n}$ and the Poincar\'{e} upper half 
	plane $\mathbb{H}\,.$

	K\"{a}hlerifications (and their completions) generalize the usual quantum state space $\mathbb{P}(\mathbb{C}^{n})\,.$ 
	But what about the observables? In the geometrical formulation, observables are \textit{K\"{a}hler functions}, 
	i.e. functions $f\,:\,\mathbb{P}(\mathbb{C}^{n})\rightarrow \mathbb{R}$ whose associated
	Hamiltonian vector fields $X_{f}$ are Killing vector fields. 

	In \S\ref{sss kahler functions}, 
	we investigate the properties of K\"{a}hler functions in the context of K\"{a}hlerification and obtain
	a relation between the statistical structure of $\mathcal{E}$ and a class of K\"{a}hler functions on 
	$\mathcal{E}^{\mathbb{C}}\,,$ as follows. If $(\Omega,dx)$ denotes the measured space on which 
	$\mathcal{E}$ is defined and if $X\,:\,\Omega\rightarrow \mathbb{R}$ belongs to a certain class of random variables which depends on 
	the exponential structure of $\mathcal{E}$ (see \eqref{eee definition les observables...}), 
	then for any holomorphic isometry $\Phi\,:\,\mathcal{E}^{\mathbb{C}}\rightarrow \mathcal{E}^{\mathbb{C}}\,,$ 
	the function 
	\begin{eqnarray}\label{equation originelle}
		\mathcal{E}^{\mathbb{C}}\rightarrow \mathbb{R}\,,\,\,\,\,\,
		z\mapsto\int_{\Omega}\,X(x)\big[(\pi_{\mathcal{E}}\circ \Phi)(z)\big](x)dx
	\end{eqnarray} 
	is a K\"{a}hler function (Corollary \ref{ccc les fonctions de la formes...}).

	This result is a ``geometric analogue" of the usual spectral decomposition theorem for 
	Hermitian matrices. For, when $\mathcal{E}=\mathcal{P}_{n}^{\times}\,,$ then the 
	space of K\"{a}hler functions $\mathscr{K}(\mathbb{P}(\mathbb{C}^{n}))$ on $\mathbb{P}(\mathbb{C}^{n})$ (the latter 
	viewed as the natural ``completion" of $(\mathcal{P}_{n}^{\times})^{\mathbb{C}}$)\,, is isomorphic in the Lie algebra 
	sense to the space of $n\times n$ skew Hermitian matrices $\mathfrak{u}(n)\,,$ i.e., $\mathscr{K}(\mathbb{P}(\mathbb{C}^{n}))
	\cong \mathfrak{u}(n)\,,$ and the decomposition in \eqref{equation originelle} is in this case a rephrasing of 
	the diagonalisability of a Hermitian matrix (see Lemma \ref{lemma comomentum map}).
		
	In \S\ref{section quantum mechanics}, while revisiting the geometrical formulation of quantum mechanics, we
	use this correspondence with spectral theory to \textit{propose} a definition 
	for the spectrum of a K\"{a}hler function $f\,:\,\mathcal{E}^{\mathbb{C}}\rightarrow\mathbb{R}$ 
	of the form given in \eqref{equation originelle}. 
	Our definition reads as follows:
	$\textup{spec}(f):=\textup{Im}(X)\,,$ where $\textup{Im}(X)$ denotes the image of the random variable 
	$X\,:\,\Omega\rightarrow\mathbb{R}\,.$ As the decomposition in \eqref{equation originelle} is usually not unique, 
	our definition is only consistent when invariance properties are met. We also define, for a K\"{a}hler function $f$ 
	as in \eqref{equation originelle} and a point $z\in \mathcal{E}^{\mathbb{C}}\,,$ what might be interpreted, 
	in a physical jargon, as the probability that the observable $f$ yields, upon measurement, 
	the eigenvalue $\lambda$ while the system is in the state 
	$z\,:$ $P_{f,z}(\lambda):=\int_{X^{-1}(\lambda)}\,\big[(\pi_{\mathcal{E}}\circ \Phi)(z)\big](x)dx\,.$   
	
	When $\mathcal{E}=\mathcal{P}_{n}^{\times}\,,$ then $\textup{spec}(f)$ and $P_{f,z}$ are well defined 
	for all K\"{a}hler functions $f$ on $\mathbb{P}(\mathbb{C}^{n})$ (we extend our definitions via density arguments)
	and together, they yield the usual probabilistic interpretation of the geometrical formulation. 
	In particular, $P_{f,z}$ depends on the expression $\cos^{2}\big(d(\,,\,)\big)\,,$ where $d(\,,\,)$
	is the geodesic distance on $\mathbb{P}(\mathbb{C}^{n})\,.$

	When $\mathcal{E}=\mathcal{B}(n,q)$ is the space of binomial distributions with parameter $q\in\, ]0,1[$ 
	defined over $\Omega:=\{0,...,n\}\,,$ then we have the following results (see \S\ref{sss binomial}). The K\"{a}hlerification of 
	$\mathcal{B}(n,q)$ is, up to completion, 
	the 2-dimensional sphere of radius $n$ (regarded as a submanifold of $\mathbb{R}^{3}$). The space of 
	K\"{a}hler functions on the sphere is generated by 
	the functions $1,x,y,z$ and is isomorphic, in the Lie algebra sense, 
	to $\mathfrak{u}(2)\,.$ For the spectral theory, if $(u,v,w)\in \mathbb{R}^{3}$ is a vector 
	whose Euclidean norm is $n/2\,,$ 
	then the spectrum (in our sense) of the function $f(x,y,z)=ux+vy+wz$ is 
	exactly $\{-j,-j+1,...,j-1,j\}\,,$ where $j=n/2\,,$ and 
	the probability $P_{f,(x,yz)}$ is given, for $k\in \{0,...,n\}\,,$ by
	\begin{eqnarray}\label{eee le spin et oui!}
		P_{f,(x,y,z)}(-j+k)=\binom{n}{k}\,\Big(\cos^{2}\big(\theta/2\big)\Big)^{k}
		\Big(\sin^{2}\big(\theta/2\big)\Big)^{n-k}\,,
	\end{eqnarray}
	where $\binom{n}{k}=\frac{n!}{(n-k)!k!}$ and where $\theta$ is an angle 
	satisfying $\frac{ux+vy+wz}{\|(u,v,w)\|}=\cos(\theta)\,.$
	
	We recognize --and this is one of the main observations of this paper-- a formula which describes 
	the spin of a particle passing through two consecutive Stern-Gerlach devices. Recall that a Stern-Gerlach device produces 
	a magnetic field oriented in a chosen direction, and that if a beam of particles (for example silver atoms) is send through it, 
	then, due to spin effect, it will split into a finite number of deflected parts. 
	Equation \eqref{eee le spin et oui!}, in this respect, gives 
	the probability that a particle entering the second Stern-Gerlach device with maximum spin\footnote{By ``maximum spin" we mean 
	that the eigenvalue of the usual spin operator of the particle along the direction of the magnetic field of the first Stern-Gerlach 
	device, is, among the possible values 
	$-j,-j+1,...,j-1,j\,,$ exactly $j\,.$ Here $j\in \{0,1/2,1,3/2,...\}$ is the spin of the particle.\label{fff footnote}} 
	is deflected into the $(-j+k)$-th outgoing beam, $\theta$ being the angle between the two magnetic fields produced by the
	Stern-Gerlach devices. 
	
	What is remarkable with this result is that it only depends on the statistical structure of 
	the binomial distribution $\mathcal{B}(n,q)\,,$ providing support to the idea 
	that the quantum formalism owes part of its mathematical structure to statistical concepts. Also, it 
	shows that the probabilistic interpretation of the geometrical formulation, through $\textup{spec}(f)$ and $P_{f,z}\,,$ 
	can be extended to more general situations than the one originally considered with the complex projective space, 
	situations which are physically relevant.

	We have to emphasis, however, that not \textit{all} the possibilities of 
	the Stern-Gerlach experiment are exhausted with \eqref{eee le spin et oui!}, which may be interpreted, at first,
	as a limitation of the statistical approach. But actually it is not. The remaining probabilities, as it turns out, can 
	be obtained fairly easily by means of the ``universal" inclusion\footnote{By ``universal", we simply mean that 
	for any statistical manifold $S$ defined over a finite set $\{x_{1},...,x_{n}\}\,,$ there is a canonical inclusion 
	$S\subseteq\mathcal{P}_{n}^{\times}\,.$ The space $\mathcal{P}_{n}^{\times}$ thus appears as a 
	``universal container".} 
	$\mathcal{B}(n,q)\subseteq\mathcal{P}_{n+1}^{\times}\,,$ as follows. By ``K\"{a}hlerifiying" this inclusion,
	one gets an embedding $S^{2}\hookrightarrow \mathbb{P}(\mathbb{C}^{n+1})$ which makes it possible to extend every 
	K\"{a}hler function $f$ on $S^{2}$ to a unique K\"{a}hler function $\widehat{f}$ on 
	$\mathbb{P}(\mathbb{C}^{n+1})\,,$ the latter function having the advantage to carry more 
	informations than the original one. In fact, we show that the map $f\mapsto \widehat{f}$ is a 
	homomorphism of Lie algebras which is, via the appropriate identifications, an 
	irreducible unitary representation of $\mathfrak{u}(2)$ (see Proposition \ref{ppp dans un café au lait} and 
	lemmas \ref{lll existence de QQQQ} and \ref{lll tiny toons}). 
	This allows us to extract the remaining 
	probabilities (recall that the spin is usually 
	described by the unitary representations of $\mathfrak{su}(2)$). 
	Mathematically, this brings an interesting link between a purely geometrical problem -- extending K\"{a}hler functions-- 
	and representation theory. 
	
	Collecting our results, we conclude that \textit{the spin 
	of a particle can be entirely understood by means of the binomial distribution $\mathcal{B}(n,q)\,.$}

	In \S\ref{section other examples}, we briefly consider the space 
	$\mathcal{N}(\mu,1)$ of Gaussian distributions of mean $\mu$ and fixed deviation $\sigma=1$ over 
	$\Omega=\mathbb{R}\,,$ give its K\"{a}hlerification and describe its associated ``spectral theory". 
	As we observe, this exponential family is closely related to the quantum harmonic oscillator, a fact 
	which can only be fully understood by the introduction of an infinite dimensional analogue of $\mathcal{P}_{n}^{\times}\,.$ 
	On this, however, we say very little due to space limitation and refer the reader to \cite{Molitor-hydro}.\\

	To summarize, we carried out, following ideas of \cite{Molitor-quantique}, 
	an analysis of the mathematical foundations of quantum mechanics, 
	using a geometric and information-theoretical approach, which points towards the following conclusion:
	\textit{the quantum formalism is grounded on the K\"{a}hler geometry which naturally emerges from statistics.}
	Examples like the spin support this claim, and the various 
	mechanisms involved have been described; we defined the K\"{a}hlerification of an exponential family 
	and sketched the very basis of what may be considered as a ``statistical spectral" theory for K\"{a}hler functions.
	In doing so, we observed an intriguing link between the problem of extending K\"{a}hler functions and representation theory 
	which seems to connect our approach to the standard way physicists work.

	The author is fully aware that the techniques and definitions introduced in this paper 
	are still in an infant stage, and that they should probably be modified in the light of further progress. 
	Nevertheless, it is likely that the relationship between statistics and 
	K\"{a}hler geometry will grow in importance, and we hope that it may help to get a better understanding 
	of the mathematical foundations of quantum mechanics. 
	Deepening this comprehension might well led 
	to a viable generalization of quantum mechanics, or at least to a new comprehension of some of its conceptually 
	puzzling aspects, especially those related to the measurement problem.

\section{Information geometry}\label{section information geometry}
	
	In this section, we review the basic concepts of information geometry 
	needed throughout this paper. Our (very short) presentation follows the currently 
	reference book \cite{Amari-Nagaoka} whose emphasis is on the Fisher metric and 
	$\alpha$-connections of a given statistical model (see also \cite{Murray-Rice}).\\
	
	A \textit{statistical manifold} (or \textit{statistical model}), is a couple $(S,j)$ where 
	$S$ is a manifold and where $j$ is an injective map from $S$ to the space of 
	all probability density functions $p$ defined on a fixed measured space $(\Omega,dx)
	\footnote{Depending on the symbole we use for the variable living in $\Omega\,,$ for example ``$x$", ``$k$", etc., we shall 
	use the notation $``dx"$, $``dk",$ etc., for the measure on $\Omega\,.$}\,:$
	\begin{eqnarray}
		j\,:\,S\hookrightarrow\Big\{p\,:\,\Omega\rightarrow\mathbb{R}\,\big\vert\,
		p\,\,\textup{is measurable,}\,\,\,p\geq 0\,\,\,\textup{and}\,\,\,\int_{\Omega}\,
		p(x)\,dx=1\Big\}\,.
	\end{eqnarray}

	In the case of a discrete space $\Omega\,,$ it will be implicitly assumed that 
	$dx$ is the counting measure, i.e. 
	$dx(A)=\textup{card}(A)\,,$ where $\textup{card}(A)$ denotes the cardinality
	of a given subset $A\subset \Omega\,.$ In this situation, integration of 
	a function $X\,:\,\Omega\rightarrow \mathbb{R}$ with respect to the probability $p\,dx$ ($p$ being 
	a probability density function), is simply given by : 
	\begin{eqnarray}
		\int_{\Omega}\,X(x)\, p(x)\,dx=\sum_{x\in \Omega}\,X(x)\,p(x)\,.
	\end{eqnarray}

	As a matter of notation, if $(\xi\,:\,U\subseteq S \rightarrow \mathbb{R}^{n})$ is a chart 
	of a statistical manifold $S$ with local coordinates $\xi=(\xi_{1},...,\xi_{n})\,,$ 
	then we shall indistinctly write $p(x;\xi)$ or $p_{\xi}(x)$ for the probability density 
	function determined by $\xi$ and in the variable $x\in \Omega\,.$\\

	Now, given a ``reasonable" statistical manifold $S\,,$ it is possible to define a metric $h_{F}$ 
	and a family of connections $\nabla^{(\alpha)}$ on $S$ ($\alpha\in\mathbb{R}$) in the following 
	way:  for a chart $\xi=(\xi_{1},...,\xi_{n})$ 
	of $S\,,$ define  
	\begin{description}
	\item[$\bullet$] $(h_{F})_{\xi}\big(\partial_{i},\partial_{j}):=
		E_{p_{\xi}}(\partial_{i}\textup{ln}\,(p_{\xi})\cdot
		\partial_{j}\textup{ln}\,(p_{\xi})\big)\,,$
	\item[$\bullet$] $\Gamma_{ij,k}^{(\alpha)}(\xi):=E_{p_{\xi}}\Big[\Big(\partial_{i}\partial_{j}
		\textup{ln}\,(p_{\xi})+\dfrac{1-\alpha}{2}\partial_{i}\textup{ln}\,(p_{\xi})\cdot\partial_{j}
		\textup{ln}\,(p_{\xi})\Big)\,\partial_{k}\textup{ln}\,(p_{\xi})\Big]\,,$
	\end{description}
	where $E_{p_{\xi}}$ denotes the mean, or expectation, with respect to the probability 
	$p_{\xi}\,dx\,,$ and where $\partial_{i}$ is a shorthand for $\partial/\partial_{\xi_{i}}\,.$\\
	It can be shown that if the above expressions are defined and smooth for every chart of 
	$S$ (this is not always the case), then $h_{F}$ is a well defined metric on $S$ called the 
	\textit{Fisher metric}, and that the $\Gamma_{ij,k}^{(\alpha)}$'s are the Christoffel symbols 
	of a connection $\nabla^{(\alpha)}$ called the $\alpha$-\textit{connection}. 
	Among the $\alpha$-connections, the $(\pm1)$-connections are particularly important; 
	the 1-connection is usually referred to as the \textit{exponential connection}, also 
	denoted $\nabla^{(e)}\,,$ while the $(-1)$-connection is referred to as 
	the \textit{mixture connection}, denoted $\nabla^{(m)}\,.$\\
	In this paper, we will only consider statistical manifolds $S$ for which the Fisher 
	metric and $\alpha$-connections are well defined. \\
	
	One particularity of the $(\pm\alpha)$-connections is that they are \textit{dual} of each other 
	with respect to the Fisher metric $h_{F}\,,$ or equivalently, that they form a \textit{dualistic 
	structure} on $S\,.$ The general definition of a dualistic structure on an arbitrary  
	manifold $M$ is as follows: a dualistic 
	structure on $M$ is a triple $(h,\nabla,\nabla^{*})$ 
	where $h$ is a Riemannian metric on $M$ and where $\nabla$ and $\nabla^{*}$ are connections 
	satisfying 
	\begin{eqnarray}\label{equation relation journée en or}
	X\big(h(Y,Z)\big)=h_{}\big(\nabla^{}_{X}Y,Z\big)+h_{}\big(Y,\nabla^{*}_{X}Z\big)\,,
	\end{eqnarray}
	for all vector fields $X,Y,Z$ on $M\,.$ The connection $\nabla^{*}$
	is called the \textit{dual connection}, or \textit{conjugate connection}, of the connection 
	$\nabla$ (and vice versa)\footnote{Given a connection $\nabla$ on a Riemannian 
	manifold $(M,h)\,,$ there exists a unique connection $\nabla^{*}$ on $M$ such that 
	\eqref{equation relation journée en or} holds; it is thus justified to call $\nabla^{*}$
	the dual connection of $\nabla\,.$}. 
	
	An example of dualistic structure is, as we already said, given by the triple 
	$(h_{F},\nabla^{(\alpha)},\nabla^{(-\alpha)})$ that one can always consider for a fixed 
	$\alpha\in \mathbb{R}$ on a statistical manifold $S$ (provided of course that the Fisher 
	metric and $(\pm\alpha)$-connections exist).\\
	
	An important class of dualistic structures is that of \textit{dually flat structures}. 
	A dually flat structure on a manifold $M$ is a dualistic structure 
	$(h,\nabla,\nabla^{*})$ for which both connections are flat, meaning that their 
	torsions and curvature tensors vanish. As conventions are not uniform in the 
	literature, let us agree that the torsion $T$ and the curvature tensor $R$ of a connection 
	$\nabla$ on $M$ are defined as
	\begin{eqnarray}
	T(X,Y)&:=&\nabla_{X}Y-\nabla_{Y}X-[X,Y]\,,\nonumber\\
	R(X,Y)Z&:=&\nabla_{X}\nabla_{Y}Z-\nabla_{Y}\nabla_{X}Z-\nabla_{[X,Y]}Z\,,
	\end{eqnarray}
	where $X,Y,Z$ are vector fields on $M\,.$
	
	Given a dualistic structure $(h,\nabla,\nabla^{*})$ on a manifold $M\,,$ there exists a simple relation 
	between the curvature tensor $R$ of $\nabla$ and the curvature tensor $R^{*}$ 
	of $\nabla^{*}$ which is 
	given by the following formula: 
	\begin{eqnarray}
		h\big(R(X,Y)Z,W\big)=-h\big(R^{*}(X,Y)W,Z\big)\,,
	\end{eqnarray}
	where $X,Y,Z,W$ are vector fields on $S\,.$ From this relation, it is clear that if 
	$\nabla$ and $\nabla^{*}$ are both torsion-free, then
	$(h,\nabla,\nabla^{*})$ is dually flat if and only if $R$ or $R^{*}$ vanishes identically (in which 
	case both curvature tensors vanish). In particular, since $\alpha$-connections are always torsion-free, 
	$(h_{F},\nabla^{(\alpha)},\nabla^{(-\alpha)})$ 
	is dually flat if and only if $R^{(\alpha)}$ or $R^{(-\alpha)}$ 
	vanishes identically (here $R^{(\alpha)}$ 
	denotes the curvature tensor of the $\alpha$-connection).

\section{Exponential families}\label{section expo families}
%
\begin{definition}\label{definition exp}
	An exponential family $\mathcal{E}$ on a measured space $(\Omega,dx)$ is a set of probability 
	density functions $p(x;\theta)$ of the form 
	\begin{eqnarray}\label{equation definiton exp}
		p(x;\theta)=\textup{exp}\,\bigg\{C(x)+\sum_{i=1}^{n}\,\theta_{i}F_{i}(x)-\psi(\theta)\bigg\}\,,
	\end{eqnarray}
	where $C,F_{1},...,F_{n}$ are measurable functions on $\Omega\,,$ $\theta=(\theta_{1},...,\theta_{n})$ 
	is a vector varying in an open subset 
	$\Theta$ of $\mathbb{R}^{n}$ and where 
	$\psi$ is a function defined on $\Theta\,.$
\end{definition}
	In the above definition, it is understood that if $\Omega$ is discrete, then 
	$dx$ should be the counting measure. It is also assumed that the family 
	$\{1,F_{1},...,F_{n}\}$ is linearly independent, so that the map $p(x,\theta)\mapsto\theta\in\Theta$ 
	becomes a bijection, hence defining a global chart of $\mathcal{E}\,.$ 
	The parameters $\theta_{1},...,\theta_{n}$ are called the 
	\textit{natural} or \textit{canonical} \textit{parameters} of the exponential family $\mathcal{E}\,.$\\

	Besides the natural parameters $\theta_{1},...,\theta_{n}\,,$ an exponential family $\mathcal{E}$ 
	possesses another particularly important parametrization which is given by the 
	\textit{expectation} or \textit{dual parameters} $\eta_{1},...,\eta_{n}\,:$ 
	\begin{eqnarray} 
		\eta_{i}(p_{\theta}):=E_{p_{\theta}}(F_{i})=\int_{\Omega}\,F_{i}(x)\,p_{\theta}(x)\,dx\,.	
	\end{eqnarray} 
	It is not difficult, assuming $\psi$ to be smooth, to show that 
	$\eta_{i}(p_{\theta})=\partial_{\theta_{i}}\psi\,.$ The map 
	$\eta=(\eta_{1},...,\eta_{n})$ is thus a global chart of $\mathcal{E}$ provided that
	$(\partial_{\theta_{1}}\psi,...,\partial_{\theta_{n}}\psi)\,:
	\,\Theta\rightarrow \mathbb{R}^{n}$ is a diffeomorphism onto its image, 
	condition that we will always assume.\\

	The natural and expectation parameters are important in that they form 
	affine coordinate systems\footnote{Let us recall that an affine coordinate system on a 
	manifold $M$ with a flat connection $\nabla\,,$ or simply 
	a $\nabla$-affine chart, is a coordinate system in which 
	all the Christoffel symbols associated to $\nabla$ vanish.} 
	with respect to $\nabla^{(e)}$ and $\nabla^{(m)}\,:$ 
	
\begin{proposition}{\cite{Amari-Nagaoka}}\label{proposition proprietes fam exp}
	Let $\mathcal{E}$ be an exponential family such as in 
	\eqref{equation definiton exp}. Then $(\mathcal{E},h_{F},\nabla^{(e)},\nabla^{(m)})$
	is dually flat and $\theta=(\theta_{1},...,\theta_{n})$ is an affine coordinate system 
	with respect to $\nabla^{(e)}$ while $\eta=(\eta_{1},...,\eta_{n})$ 
	is an affine coordinate system with respect to $\nabla^{(m)}\,.$ 
	Moreover, the following relation holds :
	\begin{eqnarray}\label{equation etrange, et curieuse}
		h_{F}(\partial_{\theta_{i}},\partial_{\eta_{j}})=\delta_{ij}\,,
	\end{eqnarray}
	where $\delta_{ij}$ denotes the Kronecker symbol. 
\end{proposition}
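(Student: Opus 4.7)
The plan is to work entirely in the coordinate chart $\theta$ provided by the natural parameters, compute everything explicitly from the log-density $\ell(x;\theta):=\ln p(x;\theta)=C(x)+\sum_i\theta_iF_i(x)-\psi(\theta)$, and then transfer statements to $\eta$ by a Legendre-duality argument. The key elementary observation is that
\[
\partial_{\theta_i}\ell=F_i(x)-\partial_{\theta_i}\psi,\qquad \partial_{\theta_i}\partial_{\theta_j}\ell=-\partial_{\theta_i}\partial_{\theta_j}\psi,
\]
so the second derivatives of $\ell$ do not depend on $x$. Also, differentiating $\int p_\theta\,dx=1$ gives $E_{p_\theta}(\partial_{\theta_i}\ell)=0$ and, once more, $E_{p_\theta}(F_i)=\partial_{\theta_i}\psi$, which confirms $\eta_i=\partial_{\theta_i}\psi$.

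First I would show that $\theta$ is a $\nabla^{(e)}$-affine chart. Plugging into the formula recalled in Section \ref{section information geometry} with $\alpha=1$,
\[
\Gamma^{(e)}_{ij,k}(\theta)=E_{p_\theta}\!\Big[\partial_{\theta_i}\partial_{\theta_j}\ell\cdot\partial_{\theta_k}\ell\Big]
=-\partial_{\theta_i}\partial_{\theta_j}\psi\cdot E_{p_\theta}(\partial_{\theta_k}\ell)=0,
\]
since the factor $\partial_{\theta_i}\partial_{\theta_j}\psi$ is independent of $x$ and the score has zero mean. This shows $\nabla^{(e)}$ has trivial Christoffel symbols in $\theta$, hence is flat and admits $\theta$ as an affine chart. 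By the same computation with $\alpha=-1$, which gives the Hessian identity
\[
(h_F)_{ij}(\theta)=E_{p_\theta}(\partial_{\theta_i}\ell\cdot\partial_{\theta_j}\ell)=-E_{p_\theta}(\partial_{\theta_i}\partial_{\theta_j}\ell)=\partial_{\theta_i}\partial_{\theta_j}\psi,
\]
the Fisher metric is the Hessian of $\psi$.

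Next I would handle the duality identity \eqref{equation etrange, et curieuse}. Since $\eta_i=\partial_{\theta_i}\psi$, the Jacobian of the change of charts is $\partial\eta_i/\partial\theta_j=(h_F)_{ij}(\theta)$, so
\[
\partial_{\theta_i}=\sum_k(h_F)_{ik}(\theta)\,\partial_{\eta_k}.
\]
Denoting by $g^{ij}(\eta)$ the components of $h_F$ in the $\eta$-chart, a standard consequence of the Legendre transform is that the matrix $(g^{ij}(\eta))$ is inverse to $((h_F)_{ij}(\theta))$ (equivalently, $h_F$ in $\eta$-coordinates is the Hessian of the Legendre dual $\varphi(\eta):=\sum_i\theta_i\eta_i-\psi(\theta)$). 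Substituting yields
\[
h_F(\partial_{\theta_i},\partial_{\eta_j})=\sum_k(h_F)_{ik}(\theta)\,g^{kj}(\eta)=\delta_{ij}.
\]

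Finally I would deduce that $\eta$ is $\nabla^{(m)}$-affine by a general duality argument: differentiating the constant function $h_F(\partial_{\theta_i},\partial_{\eta_j})=\delta_{ij}$ along any vector field $X$ and using the dualistic relation \eqref{equation relation journée en or} gives
\[
0=h_F\!\big(\nabla^{(e)}_X\partial_{\theta_i},\partial_{\eta_j}\big)+h_F\!\big(\partial_{\theta_i},\nabla^{(m)}_X\partial_{\eta_j}\big)=h_F\!\big(\partial_{\theta_i},\nabla^{(m)}_X\partial_{\eta_j}\big),
\]
because $\nabla^{(e)}\partial_{\theta_i}=0$ by Step~1. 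As $(\partial_{\theta_i})$ is a basis and $h_F$ is non-degenerate, $\nabla^{(m)}_X\partial_{\eta_j}=0$ for every $X$, so $\eta$ is indeed $\nabla^{(m)}$-affine and $\nabla^{(m)}$ is flat. Dual flatness of $(h_F,\nabla^{(e)},\nabla^{(m)})$ then follows. The only part that requires some care is the Legendre inversion step, since one must invoke the standing assumption that $(\partial_{\theta_1}\psi,\dots,\partial_{\theta_n}\psi)$ is a diffeomorphism onto its image to guarantee that the Hessian of $\psi$ is non-degenerate and thus invertible at every point; apart from that, the argument is a straightforward chain of computations.
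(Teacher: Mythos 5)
Your proof is correct and is essentially the standard argument from Amari--Nagaoka, which the paper simply cites without reproducing a proof: vanishing of $\Gamma^{(e)}_{ij,k}$ in the $\theta$-chart because $\partial_{\theta_i}\partial_{\theta_j}\ln p_\theta$ is deterministic and the score has zero mean, the identification of $h_F$ with $\mathrm{Hess}(\psi)$, the Legendre/Jacobian computation giving \eqref{equation etrange, et curieuse}, and the duality relation \eqref{equation relation journée en or} to transfer flatness to $\nabla^{(m)}$. One cosmetic remark: the identity $E_{p_\theta}(\partial_{\theta_i}\ell\cdot\partial_{\theta_j}\ell)=-E_{p_\theta}(\partial_{\theta_i}\partial_{\theta_j}\ell)$ does not come from ``the same computation with $\alpha=-1$'' but from differentiating $E_{p_\theta}(\partial_{\theta_i}\ell)=0$ once more in $\theta_j$; the identity itself is correct, so this does not affect the validity of the argument.
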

	
	Let us now give some examples of exponential families, mostly taken from \cite{Amari-Nagaoka}. 
\begin{example}[Normal Distribution]\label{example normal distribution}
	Normal distributions 
	\begin{eqnarray}		
		p(x;\mu,\sigma)=\dfrac{1}{\sqrt{2\pi}\sigma}\textup{exp}
		\Big\{-\dfrac{(x-\mu)^{2}}{2\sigma^{2}}\Big\}\,\,\,\,\,\,(x\in\mathbb{R})\,,
	\end{eqnarray}
	form a 2-dimensional statistical manifold parameterized 
	by $(\mu,\sigma)\in \mathbb{R}\times\mathbb{R}_{+}^{*}$ (here $\mathbb{R}_{+}^{*}:=\{x\in \mathbb{R}\,\vert\,x>0\}$),
	subsequently denoted $\mathcal{N}(\mu,\sigma^{2})\,.$ This family is easily 
	seen to be an exponential one, for one may write
	\begin{eqnarray}\label{equation reecriture normal}
		p(x;\mu,\sigma)=\textup{exp}\Big\{\dfrac{\mu}{\sigma^{2}}x-\dfrac{1}{2\sigma^{2}}x^{2}
		-\dfrac{\mu^{2}}{2\sigma^{2}}-\textup{ln}(\sqrt{2\pi}\sigma)\Big\}\,,
	\end{eqnarray}
	and define 
	\begin{eqnarray}
		\theta_{1}=\dfrac{\mu}{\sigma^{2}},
			\,\,\,\theta_{2}=-\dfrac{1}{2\sigma^{2}}\,,\,\,\,
			C(x)=0\,,\,\,\,F_{1}(x)=x\,,\,\,\,
			F_{2}(x)=x^{2}\,,\,\,\,\psi(\theta)=
			-\dfrac{(\theta_{1})^{2}}{4\theta_{2}}+\dfrac{1}{2}\textup{ln}\,
			\Big(-\dfrac{\pi}{\theta_{2}}\Big)\,.
	\end{eqnarray}
\end{example}
\begin{example}[finite $\Omega$]\label{example finite omega}
	For a finite set $\Omega=\{x_{1},...,x_{n}\}\,,$ define 
	\begin{eqnarray}
		\mathcal{P}_{n}^{\times}:=\Big\{p\,:\,\Omega\rightarrow \mathbb{R}\,\big\vert\,p(x)>0\,\,
		\textup{for all}\,\,x\in \Omega\,\,\textup{and}\,\,\sum_{k=1}^{n}\,p(x_{k})=1\Big\}\,.
	\end{eqnarray}
	The space $\mathcal{P}_{n}^{\times}$ is clearly a statistical manifold of dimension $n-1\,,$ 
	and it can be turned into an exponential family by means of the following parameterization: 
	\begin{eqnarray}
		p(x;\theta)=\textup{exp}\bigg\{\sum_{i=1}^{n-1}\,\theta_{i}F_{i}(x)-\psi(\theta)\bigg\}\,,
	\end{eqnarray} 
	where $x\in \Omega\,,$ $\theta=(\theta_{1},...,\theta_{n-1})\in \mathbb{R}^{n-1}\,,$ $F_{i}(x_{j})
	=\delta_{ij}$ and where $\psi(\theta)=-\textup{ln}\big(1+\sum_{i=1}^{n-1}\,\textup{exp}(\theta_{i})
	\big)\,.$
\end{example} 

\section{Dombrowski's construction}\label{section dom}
	In this section, we explain, following Dombrowski's paper \cite{Dombrowski}, 
	how the tangent bundle of a given dually flat manifold can be turned into a 
	K\"{a}hler manifold by a simple geometric construction. This implies in particular 
	that the tangent bundle $T\mathcal{E}$ of an exponential family 
	is naturally a K\"{a}hler manifold. 
	
	Most of the results of this section are due to Dombrowski, except for Lemma 
	\ref{lemma T=0 equi} and subsequent corollaries which are natural extensions 
	of \cite{Dombrowski}. \\

	Recall that if $M$ is a manifold endowed with an affine connection $\nabla\,,$ then
	Dombrowski splitting Theorem holds (see \cite{Dombrowski,Lang}) :
	\begin{eqnarray}
		T(TM)\cong TM\oplus TM\oplus TM\,,
	\end{eqnarray}
	this splitting being viewed as an isomorphism of vector bundles over $M\,,$ and the 
	isomorphism, say $\Phi\,,$
	being
	\begin{eqnarray}\label{equation Dombrowski}
	T_{u_{x}}TM\ni A_{u_{x}}\overset{\Phi}{\longmapsto} 
	\big(u_{x},\pi_{*_{u_{x}}}A_{u_{x}},K A_{u_{x}}\big)\,,
	\end{eqnarray}
	where $\pi\,:\,TM\rightarrow M$ is the canonical projection and where 
	$K\,:\,T(TM)\rightarrow TM$ is the canonical connector associated to the connection 
	$\nabla$ (see \cite{Lang}). 

	Having $A_{u_{x}}=\Phi^{-1}\big((u_{x},v_{x},w_{x})\big)\in T_{u_{x}}TM\,,$ we shall write, for 
	simplicity, $A_{u_{x}}=(u_{x},v_{x},w_{x})$ instead of $\Phi^{-1}\big((u_{x},v_{x},w_{x})\big)\,,$
	i.e., we will drop $\Phi\,.$ The second component 
	$v_{x}$ is usually referred to as the horizontal component of $A_{u_{x}}$ (with respect 
	to the connection $\nabla$) and $w_{x}$ the vertical component.  \\

	With the above notation, and provided that $M$ is endowed with a Riemannian metric $h\,,$ 
	it is a simple matter to define on 
	$TM$ an almost Hermitian structure. 
	Indeed, we define a metric $g\,,$ a 2-form $\omega$
	and an almost complex structure $J$ by setting
	\begin{eqnarray}\label{equation definition G, omega, etc.}
		g_{u_{x}}\big(\big(u_{x},v_{x},w_{x}\big),
			\big({u}_{x},\overline{v}_{x},
			\overline{w}_{x}\big)\big)&:=&
			h_{x}\big(v_{x},\overline{v}_{x}\big)+
			h_{x}\big(w_{x},\overline{w}_{x}\big)\,,\nonumber\\
		\omega_{u_{x}}\big(\big(u_{x},v_{x},w_{x}\big),
			\big({u}_{x},\overline{v}_{x},
			\overline{w}_{x}\big)\big)&:=&h_{x}\big(v_{x},\overline{w}_{x}\big)-
			h_{x}\big(w_{x},\overline{v}_{x}\big)\,,\nonumber\\
		J_{u_{x}}\big(\big(u_{x},v_{x},w_{x}\big)\big)&:=&
		\big(u_{x},-w_{x},v_{x}\big)\,,
	\end{eqnarray}
	where $u_{x},v_{x},w_{x},\overline{v}_{x},\overline{w}_{x}
	\in T_{x}M\,.$\\
	Clearly, $J^{2}=-\textup{Id}$  and $g(J\,.\,,J\,.\,)=g(\,.\,,\,.\,)\,,$ which means that 
	$(TM,g,J)$ is an almost Hermitian manifold, and one readily sees that 
	$g,J$ and $\omega$ are compatible, i.e., that $
	\omega=g\big(J\,.\,,\,.\,\big)\,;$ the $2$-form $\omega$ is thus the fundamental 2-form of 
	the almost Hermitian manifold $(TM,g,J)\,.$ This is Dombrowski's construction.

	Observe that the map $\pi\,:\,(TM,g)\rightarrow (M,h)$ is a Riemannian submersion. \\

	In \cite{Dombrowski}, Dombrowski shows the following:
\begin{proposition}[\cite{Dombrowski}]\label{proposition integrable egale flat}
	Let $\nabla$ be an affine connection defined on a manifold $M\,,$ and let $J$ be 
	the almost complex structure associated to $\nabla$ as in \eqref{equation definition G, omega, etc.}. 
	Then, 
	\begin{eqnarray}
		J\,\,\,\textup{is integrable}\,\,\,\,\,\Leftrightarrow\,\,\,\,\,\nabla\,\,\,\textup{is flat}\,.
	\end{eqnarray}
\end{proposition}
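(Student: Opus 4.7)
The plan is to apply the Newlander--Nirenberg theorem: $J$ is integrable if and only if its Nijenhuis tensor
\[
N_J(V,W) := [JV, JW] - J[JV, W] - J[V, JW] - [V, W]
\]
vanishes identically on $TM$. Since $N_J$ is a tensor, it is enough to check it on a local frame, and Dombrowski's splitting \eqref{equation Dombrowski} supplies a canonical one: at every point $u_x \in TM$ each tangent vector decomposes uniquely into horizontal and vertical parts, so I would evaluate $N_J$ on horizontal lifts $X^h$ and vertical lifts $X^v$ of local vector fields $X$ on $M$, characterized in the splitting by $X^h(u_x) = (u_x, X(x), 0)$ and $X^v(u_x) = (u_x, 0, X(x))$. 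From \eqref{equation definition G, omega, etc.} one reads off the action $JX^h = X^v$ and $JX^v = -X^h$.

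The first step is to establish the Lie bracket identities
\[
[X^v, Y^v] = 0, \qquad [X^h, Y^v] = (\nabla_X Y)^v, \qquad [X^h, Y^h]_{u_x} = [X,Y]^h_{u_x} - \bigl(R(X,Y)u_x\bigr)^v,
\]
where $R$ is the curvature tensor of $\nabla$. In induced local coordinates $(x^i, v^j)$ on $TM$ the lifts take the explicit form $X^h = X^i \partial_i - X^i \Gamma^k_{ij} v^j \partial/\partial v^k$ and $X^v = X^i \partial/\partial v^i$, and all three identities then reduce to direct, if tedious, bracket computations. The essential observation is that the curvature emerges precisely as the vertical correction to the horizontal--horizontal bracket, with no torsion term.

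Substituting these identities into the definition of $N_J$ and using the torsion identity $\nabla_X Y - \nabla_Y X - [X,Y] = T(X,Y)$ to reorganize the terms, I expect to obtain
\[
N_J(X^h, Y^h) = T(X,Y)^h + \bigl(R(X,Y)u_x\bigr)^v, \quad N_J(X^h, Y^v) = -T(X,Y)^v + \bigl(R(X,Y)u_x\bigr)^h,
\]
and $N_J(X^v, Y^v) = -T(X,Y)^h - (R(X,Y)u_x)^v$. Because horizontal and vertical components are linearly independent at each point and $X, Y, u_x$ are arbitrary, the identical vanishing of $N_J$ is equivalent to the simultaneous vanishing of $T$ and $R$, which is exactly the notion of flatness adopted in \S\ref{section information geometry}. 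The main obstacle is the horizontal--horizontal bracket identity, where one must carefully extract the curvature tensor from the coordinate expression and keep sign conventions consistent with those of \eqref{equation definition G, omega, etc.}; the remainder of the argument is algebraic bookkeeping.
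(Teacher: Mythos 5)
Your proof is correct: the bracket identities $[X^v,Y^v]=0$, $[X^h,Y^v]=(\nabla_XY)^v$, $[X^h,Y^h]=[X,Y]^h-(R(X,Y)u_x)^v$ and the resulting expressions for $N_J$ on horizontal and vertical lifts all check out with the paper's sign conventions, and since the paper's notion of flatness is the vanishing of both $T$ and $R$, the simultaneous vanishing of the horizontal and vertical components of $N_J$ gives exactly the stated equivalence via Newlander--Nirenberg. The paper does not reprove this proposition but defers to Dombrowski, whose original argument is precisely this Nijenhuis-tensor computation on the horizontal/vertical splitting, so your route coincides with the source.
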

	The tangent bundle $TM$ of a manifold $M$ endowed with a flat connection $\nabla$ is thus 
	naturally a complex manifold. If in addition $M$ is equipped with a Riemannian metric 
	$h\,,$ then $TM$ becomes a complex Hermitian manifold for the Hermitian structure $(g,J,\omega)$ 
	considered above. 

	For the $2$-form $\omega$ defined in \eqref{equation definition G, omega, etc.}, 
	we have the following result:
\begin{lemma}\label{lemma T=0 equi}
	Let $(M,h)$ be a Riemannian manifold endowed with a flat connection $\nabla\,,$ and let 
	$\omega$ be the $2$-form defined as in \eqref{equation definition G, omega, etc.}\,. Then,
	\begin{eqnarray}
			d\omega=0\,\,\,\,\,\Leftrightarrow\,\,\,\,\,T^{*}=0\,,
	\end{eqnarray}
	where $T^{*}$ denotes the torsion of the dual connection $\nabla^{*}\,.$ 
\end{lemma}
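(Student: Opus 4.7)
The plan is to compute $d\omega$ by evaluating it on triples of vector fields on $TM$ of the form obtained by horizontal and vertical lifts from $M$. Since horizontal and vertical lifts together span $T(TM)$ at every point, and since $d\omega$ is tensorial, checking $d\omega = 0$ reduces to checking it on such triples. I would use the Cartan formula
\begin{eqnarray*}
d\omega(A,B,C) &=& A\,\omega(B,C) - B\,\omega(A,C) + C\,\omega(A,B)\\
 & & {} - \omega([A,B],C) + \omega([A,C],B) - \omega([B,C],A).
\end{eqnarray*}

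The preliminary ingredients I would record first are: the values of $\omega$ on lifts, namely $\omega(X^H,Y^H)=\omega(X^V,Y^V)=0$ and $\omega(X^H,Y^V) = h(X,Y)\circ\pi$ (immediate from the definition \eqref{equation definition G, omega, etc.} together with $X^H_{u_x}=(u_x,X_x,0)$ and $X^V_{u_x}=(u_x,0,X_x)$); the Lie bracket formulas on $TM$, in particular $[X^V,Y^V]=0$, $[X^H,Y^V] = (\nabla_X Y)^V$, and $[X^H,Y^H] = [X,Y]^H$ where the last identity uses flatness of $\nabla$ (so the curvature term vanishes); and the fact that vertical lifts annihilate pulled-back functions, i.e. $Y^V(f\circ\pi)=0$. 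I would also single out the duality identity $X\,h(Y,Z) = h(\nabla_X Y,Z) + h(Y,\nabla^*_X Z)$ from \eqref{equation relation journée en or}, which is the only place $\nabla^*$ enters.

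With these in hand, three of the four types of triples fall out immediately. For $(X^H,Y^H,Z^H)$, all the $\omega$-terms vanish by the first formula and all brackets remain horizontal (here is where flatness is used), so $d\omega=0$. For $(X^V,Y^V,Z^V)$, all brackets vanish and $\omega$ kills every pair, so again $d\omega=0$. For $(X^H,Y^V,Z^V)$, the only potentially nonzero $\omega$-term is a pulled-back function, which is killed by the vertical derivative, while the brackets produce either vertical lifts or zero, and $\omega$ vanishes on vertical-vertical, so $d\omega=0$ once more. The decisive case is $(X^H,Y^H,Z^V)$. Expanding it, using the duality identity to rewrite $X\,h(Y,Z)$ and $Y\,h(X,Z)$, and collecting, I expect the result to take the form
\begin{eqnarray*}
d\omega(X^H,Y^H,Z^V)_{u_x} &=& h_x\bigl(T(X,Y) - A_X Y + A_Y X,\ Z\bigr),
\end{eqnarray*}
where $A := \nabla - \nabla^*$ and $T$ is the torsion of $\nabla$.

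The final step is a short algebraic identification. Writing $A_X Y - A_Y X = (\nabla_X Y-\nabla_Y X)-(\nabla^*_X Y-\nabla^*_Y X)= T(X,Y)-T^*(X,Y)$, the bracket terms cancel and one is left with
\begin{eqnarray*}
d\omega(X^H,Y^H,Z^V)_{u_x} = h_x\bigl(T^*(X,Y),Z\bigr).
\end{eqnarray*}
By non-degeneracy of $h$ and the fact that $X,Y,Z$ are arbitrary, this single expression controls whether $d\omega$ vanishes, yielding the claimed equivalence. The main obstacle I anticipate is bookkeeping in the mixed case $(X^H,Y^H,Z^V)$: one must carefully distribute the Cartan-formula terms, keep track of which terms are pulled back from $M$ (and hence killed by $Z^V$), and apply the duality relation exactly twice to convert the $\nabla$-derivatives of $h$ into the desired $A_X$ and $A_Y$ contributions. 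Once this is done cleanly, the identification with $T^*$ is purely formal.
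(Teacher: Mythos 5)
Your proof is correct, but it takes a genuinely different route from the paper's. The paper exploits flatness of $\nabla$ at the outset by working in a $\nabla$-affine chart $(x_{1},\dots,x_{n})$ and the induced chart $(x,y)$ on $TM$: there $\omega=\sum_{i,j}h_{ij}(x)\,dx_{i}\wedge dy_{j}$, so $d\omega$ reduces by inspection to the single family of components $\partial_{x_{i}}h_{jk}-\partial_{x_{j}}h_{ik}$, which the duality relation \eqref{equation relation journée en or} (with all $\nabla$-Christoffel symbols vanishing in that chart) identifies with $h\big(T^{*}(\partial_{x_{i}},\partial_{x_{j}}),\partial_{x_{k}}\big)$. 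You instead run a chart-free computation via the Cartan formula on horizontal and vertical lifts, using flatness only through the bracket identity $[X^{H},Y^{H}]=[X,Y]^{H}$. The two computations are essentially parallel --- your decisive case $(X^{H},Y^{H},Z^{V})$ is the global counterpart of the paper's $(\partial_{x_{i}},\partial_{x_{j}},\partial_{y_{k}})$ components --- but yours is coordinate-free and makes visible exactly where each hypothesis enters, while the paper's is shorter because the affine chart kills most terms before any bracket has to be computed. One small remark on your mixed case: applying the duality identity literally as $Xh(Y,Z)=h(\nabla_{X}Y,Z)+h(Y,\nabla^{*}_{X}Z)$ lands you on $h(T(X,Y),Z)-h(Y,A_{X}Z)+h(X,A_{Y}Z)$ rather than on $h(T(X,Y)-A_{X}Y+A_{Y}X,Z)$; passing between the two uses the self-adjointness of $A_{X}$ with respect to $h$, which itself follows from \eqref{equation relation journée en or} applied with the roles of $\nabla$ and $\nabla^{*}$ exchanged. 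It is slightly cleaner to invoke the duality relation in the equivalent form $Xh(Y,Z)=h(\nabla^{*}_{X}Y,Z)+h(Y,\nabla_{X}Z)$, after which $h(T^{*}(X,Y),Z)$ drops out directly with no detour through $A$.
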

\begin{proof}
	Let us consider a $\nabla$-affine chart 
	$(\varphi\,:\,U\subseteq M \rightarrow \mathbb{R}^{n})$ 
	with local coordinates $\varphi=(x_{1},...,x_{n})\,.$ In this chart, all Christoffel symbols 
	$\Gamma^{k}_{ij}$
	associated to $\nabla$ vanish. Let us also consider the chart 
	$(\overline{\varphi}\,:\,\overline{U}\subseteq TM\rightarrow \mathbb{R}^{n}\times
	\mathbb{R}^{n})$ with local coordinates $\overline{\varphi}=(x_{1},...,x_{n},
	{y}_{1},...,{y}_{n})$ canonically associated to $(U,\varphi)\,,$ i.e., 
	$\overline{U}:=\pi^{-1}(U)$ ($\pi\,:\,TM\rightarrow M$ 
	being the canonical projection) and where $
		\overline{\varphi}\big(\sum_{i=1}^{n}\,a_{i}\partial_{x_{i}}\vert_{x}\big)
		:=(\varphi(x),a_{1},...,a_{n})\,.$
	In this chart, it is not hard to see that 
	\begin{eqnarray}\label{equation je suis constipé!}
		\Omega(x,y)=
		\left(\begin{array}{cc}
			0&(h(x))_{ij}\\
			-(h(x))_{ij}&0
		\end{array}\right)\,,
	\end{eqnarray}
	where $(h(x))_{ij}:=h(\partial_{x_{i}},\partial_{x_{j}})$ and where
	$x=(x_{1},...,x_{n})$ and $y=({y}_{1},...,{y}_{n})\,.$

	Now, $\Omega$ is closed if and only if for all $i,j,k=1,...,n\,,$
	\begin{eqnarray}
		(d\Omega)(\partial_{x_{i}},\partial_{{x}_{j}},\partial_{x_{k}})&=&0\,,\,\,\,\,\,
		(d\Omega)(\partial_{x_{i}},\partial_{{x}_{j}},\partial_{{y}_{k}})=0\,,\label{equation pouette}\\
		(d\Omega)(\partial_{x_{i}},\partial_{{{y}}_{j}},\partial_{{y}_{k}})&=&0\,,\,\,\,\,\,
		(d\Omega)(\partial_{{y}_{i}},\partial_{{{y}}_{j}},\partial_{{y}_{k}})=0\,,
		\label{equation pouette2}
	\end{eqnarray}
	and it is easy, using \eqref{equation je suis constipé!}, to see that the only 
	possibly non-vanishing terms in the equations \eqref{equation pouette} and \eqref{equation pouette2} are 
	$(d\Omega)(\partial_{x_{i}},\partial_{{{y}}_{j}},\partial_{{y}_{k}})=\partial_{x_{i}}h_{jk}
	-\partial_{x_{j}}h_{ik}\,.$ Moreover, it is a simple calculation to show that 
	\begin{eqnarray}
		\partial_{x_{i}}h_{jk}-\partial_{x_{j}}h_{ik}
		=h\big(T^{*}(\partial_{x_{i}},\partial_{x_{j}}),\partial_{x_{k}}\big)\,.
	\end{eqnarray}
	Hence, $d\Omega=0$ if and only if  $T^{*}=0\,.$ The lemma follows. 
\end{proof}
	Recall that an almost Hermitian structure $(g,J,\omega)$ on a given manifold 
	is K\"{a}hler when the following two analytical conditions are met: 
	(1) $J$ is integrable; (2) $d\omega=0\,.$ Having this in mind, Proposition 
	\ref{proposition proprietes fam exp}, Proposition \ref{proposition integrable egale flat} and
	Lemma \ref{lemma T=0 equi} readily imply the following two corollaries:
\begin{corollary}\label{corollary c'est kahler!}
	Let $(h,\nabla,\nabla^{*})$ be a dualistic structure on a manifold $M$ and let 
	$(g,J,\omega)$ be the almost Hermitian structure on $TM$ associated to $(h,\nabla)$ via 
	Dombrowski's construction. Then, 
	\begin{eqnarray}
		(TM,g,J,\omega)\,\,\,\textup{is K\"{a}hler}\,\,\,\,\,\Leftrightarrow\,\,\,\,\,
		(M,h,\nabla,\nabla^{*})\,\,\,\textup{is dually flat.}
	\end{eqnarray}
\end{corollary}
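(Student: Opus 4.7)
The plan is to simply package Proposition \ref{proposition integrable egale flat} and Lemma \ref{lemma T=0 equi} with the curvature duality relation $h(R(X,Y)Z,W) = -h(R^{*}(X,Y)W,Z)$ stated in Section \ref{section information geometry}. By the very definition of a K\"{a}hler structure, $(TM,g,J,\omega)$ is K\"{a}hler if and only if (i) the almost complex structure $J$ is integrable, and (ii) the fundamental $2$-form $\omega$ is closed. First I would observe that Proposition \ref{proposition integrable egale flat} identifies (i) with $\nabla$ being flat (in the paper's convention: torsion-free and curvature-free), while Lemma \ref{lemma T=0 equi} identifies (ii) with the vanishing of the torsion $T^{*}$ of the dual connection. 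So $(TM,g,J,\omega)$ is K\"{a}hler if and only if $\nabla$ is flat and $T^{*}=0$.

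The remaining task is to upgrade ``$\nabla$ flat and $T^{*}=0$'' to the full dually flat condition ``$\nabla$ flat and $\nabla^{*}$ flat''. Here I would invoke the curvature duality relation: since $h$ is non-degenerate, $R\equiv 0$ if and only if $R^{*}\equiv 0$. Therefore, if $\nabla$ is flat (in particular $R=0$), then automatically $R^{*}=0$; combined with $T^{*}=0$, this gives that $\nabla^{*}$ is also flat. The converse direction is immediate, as a dually flat structure has $\nabla$ flat and $T^{*}=0$ by definition.

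I expect no real obstacle: the corollary is essentially a syntactic consequence of what has just been proved, and the only point demanding a moment of care is the bookkeeping of torsion versus curvature. Specifically, I need the paper's convention that ``flat'' means \emph{both} torsion-free and curvature-free (as stated in the definition of a dually flat structure in Section \ref{section information geometry}), so that Proposition \ref{proposition integrable egale flat} supplies $T=0$ in addition to $R=0$, and so that the promotion of $R^{*}=0$ to flatness of $\nabla^{*}$ via Lemma \ref{lemma T=0 equi}'s conclusion $T^{*}=0$ is legitimate. Once these conventions are kept straight, the equivalence falls out in a few lines.
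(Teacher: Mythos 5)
Your proof is correct and follows essentially the same route as the paper, which also obtains the corollary by combining Proposition \ref{proposition integrable egale flat} (integrability of $J$ $\Leftrightarrow$ $\nabla$ flat) with Lemma \ref{lemma T=0 equi} ($d\omega=0$ $\Leftrightarrow$ $T^{*}=0$) and the curvature duality relation from Section \ref{section information geometry}. Your explicit use of $h\big(R(X,Y)Z,W\big)=-h\big(R^{*}(X,Y)W,Z\big)$ to promote $T^{*}=0$ to full flatness of $\nabla^{*}$ just spells out what the paper leaves implicit.
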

\begin{corollary}\label{corollary encore que dire?}
	The tangent bundle $T\mathcal{E}$ of an exponential family $\mathcal{E}$ is a 
	K\"{a}hler manifold for the K\"{a}hler structure $(g,J,\omega)$ associated to 
	$(h_{F},\nabla^{(e)})$ via Dombrowski's construction.
\end{corollary}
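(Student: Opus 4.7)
The plan is to assemble Corollary \ref{corollary encore que dire?} as a direct consequence of the two preceding structural results, namely Proposition \ref{proposition proprietes fam exp} on exponential families and Corollary \ref{corollary c'est kahler!} on Dombrowski's construction applied to dually flat manifolds. The logical architecture is essentially a syllogism: exponential families carry a dually flat structure, and Dombrowski's construction turns dually flat structures into Kähler structures on the tangent bundle; combining these gives the claim.

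Concretely, I would proceed as follows. First, let $\mathcal{E}$ be an exponential family in the sense of Definition \ref{definition exp}, and equip it with its Fisher metric $h_F$ and its $(\pm 1)$-connections $\nabla^{(e)}$ and $\nabla^{(m)}$. Recall from the general discussion of $\alpha$-connections in Section \ref{section information geometry} that the pair $(\nabla^{(e)},\nabla^{(m)})$ is mutually dual with respect to $h_F$, i.e.\ $(h_F,\nabla^{(e)},\nabla^{(m)})$ is a dualistic structure on $\mathcal{E}$. Second, invoke Proposition \ref{proposition proprietes fam exp}, which asserts that for an exponential family this dualistic structure is in fact dually flat: the natural parameters provide a $\nabla^{(e)}$-affine chart and the expectation parameters provide a $\nabla^{(m)}$-affine chart, so both curvature tensors vanish.

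Third, I would apply Dombrowski's construction of Section \ref{section dom} with $(M,h,\nabla,\nabla^{*})=(\mathcal{E},h_F,\nabla^{(e)},\nabla^{(m)})$, producing the almost Hermitian triple $(g,J,\omega)$ on $T\mathcal{E}$ defined by the formulas in \eqref{equation definition G, omega, etc.}. Finally, Corollary \ref{corollary c'est kahler!} applied to this triple gives immediately that $(T\mathcal{E},g,J,\omega)$ is Kähler, since the hypothesis (dual flatness of the base) has just been verified. This is exactly the statement to be proved.

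No genuine obstacle is expected: the only point that would merit a sentence of justification is the consistency of the notation, namely that the ``dual connection'' appearing in Corollary \ref{corollary c'est kahler!} coincides with the mixture connection $\nabla^{(m)}$ in the statistical setting. This is already built into the framework of $\alpha$-connections as recalled in Section \ref{section information geometry}, so it does not require an independent argument. Thus the proof of Corollary \ref{corollary encore que dire?} reduces to a one-line citation combining Proposition \ref{proposition proprietes fam exp} and Corollary \ref{corollary c'est kahler!}.
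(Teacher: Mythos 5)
Your proposal is correct and follows essentially the same route as the paper, which likewise obtains this corollary by combining Proposition \ref{proposition proprietes fam exp} (dual flatness of $(h_F,\nabla^{(e)},\nabla^{(m)})$ on an exponential family) with the general criterion of Corollary \ref{corollary c'est kahler!} (itself resting on Proposition \ref{proposition integrable egale flat} and Lemma \ref{lemma T=0 equi}). Nothing is missing.
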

	In the sequel, by the K\"{a}hler structure of $T\mathcal{E}\,,$ we shall implicitly refer to 
	the K\"{a}hler structure of $T\mathcal{E}$ described in Corollary \ref{corollary encore que dire?}.

\section{K\"{a}hler functions on an exponential family}\label{sss kahler functions}

\begin{definition}\label{definition 1}	
	Let $(N,g,J,\omega)$ be a K\"{a}hler manifold. We shall say that a function 
	$f\,:\,N\rightarrow \mathbb{R}$ is a K\"{a}hler function if 
	\begin{eqnarray}
		\mathscr{L}_{X_{f}}g=0\,,
	\end{eqnarray}
	where $X_{f}$ denotes the symplectic 
	gradient of $f$ with respect to the symplectic form $\omega\,,$ i.e., 
	$\omega(X_{f},\,.\,)=df(.)\,,$ and where $\mathscr{L}_{X_{f}}$ denotes the 
	Lie derivative in the direction $X_{f}\,.$
\end{definition}
	Clearly, a K\"{a}hler function $f\,:\,N\rightarrow\mathbb{R}$ preserves 
	the K\"{a}hler structure of $N$ in the sense 
	that $\mathscr{L}_{X_{f}}g=0$ and $\mathscr{L}_{X_{f}}\omega=0\,,$ hence 
	the terminology. Following \cite{Cirelli-Quantum}, we shall also denote by 
	$\mathscr{K}(N)$ the space of all K\"{a}hler functions 
	defined on a K\"{a}hler manifold $N\,.$ When $N$ has a finite number of connected components, then
	the space of K\"{a}hler functions on $N$ is a finite dimensional\footnote{The fact that $\mathscr{K}(N)$ is finite dimensional 
	comes from the following result: if $(M,h)$ is a connected Riemannian manifold, 
	then its space of Killing vector fields $\mathfrak{X}_{\textup{Kill}}(M):=
	\{X\in \mathfrak{X}(M)\,\big\vert\,\mathscr{L}_{X}h=0\}$ 
	is finite dimensional (see for example \cite{Jost}). } Lie algebra for the natural Poisson 
	bracket $\{f,g\}:=\omega(X_{f},X_{g})\,.$ \\

	In the case of a K\"{a}hler structure associated to 
	a dually flat manifold via Dombrowski's construction, we shall use the following terminology: 
\begin{definition}\label{definition 2}
	Let $(h,\nabla,\nabla^{*})$ be a dually flat structure on a given manifold 
	$M\,.$ We shall say that a function $f\,:\,M\rightarrow \mathbb{R}$ is a 
	K\"{a}hler function if $f\circ \pi\,:\,TM\rightarrow\mathbb{R}$ is a K\"{a}hler function with respect to the 
	K\"{a}hler structure of $TM$ associated to $(h,\nabla)$ via Dombrowski's construction 
	(here $\pi\,:\,TM\rightarrow M$ is the canonical projection). 
\end{definition}
	
	We shall denote by $\mathscr{K}(M)$ the space of K\"{a}hler functions 
	on a dually flat manifold $M\,.$ Clearly, $\mathscr{K}(M)\subseteq \mathscr{K}(TM)$ 
	via the map $f\mapsto f\circ \pi\,.$\\

	We now want to characterize the space of K\"{a}hler functions on a dually flat manifold. 
	To this end, recall that a vector field $X$ on a manifold $M$ is said to be 
	$\nabla$\textup{-parallel} with respect to a given connection $\nabla$ if 
	$\nabla_{Y}X=0$ for all vector fields $Y$ on $M\,.$
\begin{proposition} \label{proposition equivalence kahler parallel}
	Let $(h,\nabla,\nabla^{*})$ be a dually flat structure on a manifold $M\,,$ 
	and let $(g,J,\omega)$ be the K\"{a}hler structure on $TM$ 
	associated to $(h,\nabla)$ via Dombrowski's construction. 
	For a given function $f\,:\,M\rightarrow \mathbb{R}\,,$ we have:
	\begin{eqnarray}
		f\,\,\textup{is a K\"{a}hler function}\,\,\,\,\,\,\Leftrightarrow\,\,\,\,\,
		\textup{grad}^{h}(f)\,\,\,\,\,\textup{is}\,\,\,\nabla\textup{-parallel}\,,
	\end{eqnarray}
	where $\textup{grad}^{h}(f)$ is the Riemannian 
	gradient of $f$ with respect to $h\,,$ i.e. $h(\textup{grad}^{h}(f),\,.\,)=df(\cdot)\,.$ 
\end{proposition}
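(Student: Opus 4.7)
The plan is to work in local coordinates provided by the dually flat structure. Since $\nabla$ is flat, around any point of $M$ there exists a $\nabla$-affine chart $(x_1,\ldots,x_n)$; I would use the induced chart $(x_1,\ldots,x_n,y_1,\ldots,y_n)$ on $TM$. In this chart, exactly as in the proof of Lemma \ref{lemma T=0 equi}, the Dombrowski data take a very simple form: the metric $g$ is block-diagonal with both blocks equal to $h(x)=(h_{ij}(x))$, and the symplectic form $\omega$ is block antidiagonal built out of $\pm h(x)$, both being independent of $y$.

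First I would compute the symplectic gradient of $f\circ\pi$ explicitly. Since $d(f\circ\pi)_{(x,y)}=\sum_i\partial_{x_i}f\,dx_i$ is ``purely horizontal'', solving $\omega(X_{f\circ\pi},\,\cdot\,)=d(f\circ\pi)$ in the above block form immediately forces
\begin{eqnarray*}
X_{f\circ\pi}=-\sum_i\big(\textup{grad}^{h}(f)\big)^i\,\partial_{y_i}.
\end{eqnarray*}
In other words, $X_{f\circ\pi}$ is minus the vertical lift of $Y:=\textup{grad}^{h}(f)$, so $f$ is a K\"{a}hler function in the sense of Definition \ref{definition 2} if and only if the vertical lift $Y^v$ is a Killing vector field on $(TM,g)$.

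Next I would compute $\mathscr{L}_{Y^v}g$ directly. The flow of $Y^v$ is the fiberwise translation $\phi_t(x,y)=(x,y+tY(x))$, and since $g_{(x,y)}$ is independent of $y$ the only nonzero contribution comes from the pushforward of each horizontal basis vector $\partial_{x_k}$, which picks up $t\sum_i\partial_{x_k}Y^i\,\partial_{y_i}$. Differentiating at $t=0$ yields
\begin{eqnarray*}
(\mathscr{L}_{Y^v}g)(\partial_{x_j},\partial_{x_k})=\sum_l\big(h_{jl}\,\partial_{x_k}Y^l+h_{lk}\,\partial_{x_j}Y^l\big),
\end{eqnarray*}
all other components vanishing trivially, so that the K\"{a}hler condition on $f$ reduces to the vanishing of this expression for all $j,k$.

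The main obstacle is to show that this vanishing is equivalent to $\partial_{x_j}Y^i=0$, which in the affine chart is precisely the $\nabla$-parallelism of $Y$. One direction is immediate. For the converse I would exploit the second half of the dually flat hypothesis: flatness of $\nabla^*$ yields $T^*=0$, and the computation from the proof of Lemma \ref{lemma T=0 equi} then gives $\partial_{x_i}h_{jk}=\partial_{x_j}h_{ik}$, so that, combined with the symmetry of $h$ in its lower indices, the tensor $H_{ijk}:=\partial_{x_i}h_{jk}$ is totally symmetric. Using $h_{jl}Y^l=\partial_{x_j}f$ and this total symmetry, the Killing condition collapses to the single equation $\partial_{x_j}\partial_{x_k}f=\sum_l Y^l\,H_{jkl}$. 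Substituting this back into the expression for $\partial_{x_j}Y^i$ obtained by differentiating $Y^i=h^{il}\partial_{x_l}f$, and invoking $\partial_{x_j}h^{il}=-h^{im}h^{ln}\partial_{x_j}h_{mn}$ together with the total symmetry of $H$, the two resulting terms cancel to give $\partial_{x_j}Y^i=0$. This cancellation is the delicate point: it is exactly where both halves of ``dually flat'' are used in an essential way --- flatness of $\nabla$ to get the affine chart, and flatness of $\nabla^*$ (via its torsion-freeness) to get the total symmetry of $H$.
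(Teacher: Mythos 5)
Your overall strategy --- passing to the $\nabla$-affine chart, computing $X_{f\circ\pi}=-Y^{v}$ with $Y:=\textup{grad}^{h}(f)$, and reducing the K\"{a}hler condition to the Killing equation for the vertical lift $Y^{v}$ --- is sound, and is essentially a coordinate version of the paper's argument (the paper instead writes the flow $\varphi^{X_{f\circ\pi}}_{t}(u_{x})=u_{x}-t\,\textup{grad}^{h}(f)_{x}$ intrinsically and pulls back $g$ through the connector $K$). Your formula for the symplectic gradient is correct. The error is in the computation of $\mathscr{L}_{Y^{v}}g$. Since, as you yourself note, $g$ is block diagonal in the chart $(x,y)$ with $g(\partial_{x_{i}},\partial_{y_{j}})=0$, the cross terms you keep in $(\phi_{t}^{*}g)(\partial_{x_{j}},\partial_{x_{k}})$ actually vanish: one finds $(\phi_{t}^{*}g)(\partial_{x_{j}},\partial_{x_{k}})=h_{jk}+t^{2}\sum_{i,l}h_{il}\,\partial_{x_{j}}Y^{i}\,\partial_{x_{k}}Y^{l}$, so that $(\mathscr{L}_{Y^{v}}g)(\partial_{x_{j}},\partial_{x_{k}})=0$ identically. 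Your displayed formula would be the linear term only if the off-diagonal blocks of $g$ were equal to $h$ rather than $0$. Conversely, the components you dismiss as ``vanishing trivially'' are exactly the nontrivial ones: from $[Y^{v},\partial_{x_{j}}]=-\sum_{i}\partial_{x_{j}}Y^{i}\,\partial_{y_{i}}$ paired against the vertical block of $g$ one gets $(\mathscr{L}_{Y^{v}}g)(\partial_{x_{j}},\partial_{y_{k}})=\sum_{i}h_{ik}\,\partial_{x_{j}}Y^{i}$, which does not vanish in general.

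The consequences are twofold. First, with the correct mixed components the Killing condition reads $\sum_{i}h_{ik}\,\partial_{x_{j}}Y^{i}=0$ for all $j,k$, which by nondegeneracy of $h$ is immediately equivalent to $\partial_{x_{j}}Y^{i}=0$, i.e. to $\nabla Y=0$ in the affine chart; no total symmetry of $H_{ijk}=\partial_{x_{i}}h_{jk}$ and no appeal to $T^{*}=0$ is needed. Your closing claim that the flatness of $\nabla^{*}$ enters this proposition ``in an essential way'' is therefore not right: it is needed for $d\omega=0$ (Lemma \ref{lemma T=0 equi}), not here, and your elaborate final cancellation is solving a difficulty that the correct computation does not produce. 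Second, the error happens to be self-correcting at the level of the final statement: the symmetrized condition you derive is implied by the true Killing equation, and, as your cancellation shows, it still forces $\partial_{x_{j}}Y^{i}=0$, which in turn implies the true Killing equation; so the equivalence you assert is correct. But as written the proof contains a false intermediate formula and the false assertion that all other components of $\mathscr{L}_{Y^{v}}g$ vanish, and it must be repaired before it can stand.
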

	In order to show Proposition \ref{proposition equivalence kahler parallel}, we need a lemma.
\begin{lemma}\label{lemma j'ecoute la chanson etrange}
	Under the hypothesis of Proposition \ref{proposition equivalence kahler parallel}, and, using 
	the identification given in \eqref{equation Dombrowski}, we have for $u_{x}\in T_{x}M\,,$
	\begin{eqnarray}
		(X_{f\circ\pi})_{u_{x}}=\big(u_{x},0,-\textup{grad}^{h}(f)_{x}\big)\,\,\,\,\,\,\,\,\textup{and}\,\,\,\,\,\,\,\,
		\label{equation y en pas deux}\label{equation le gradient, il est la} 
		\varphi^{X_{f\circ\pi}}_{t}(u_{x})=u_{x}-t\,\textup{grad}^{h}(f)_{x}\,,\label{j'ecoute silent hill2}
	\end{eqnarray}
	where $X_{f\circ\pi}$ is the symplectic gradient of 
	$f\circ \pi\,:\,TM\rightarrow\mathbb{R}$ with respect to $\omega$ 
	and where $\varphi^{X_{f\circ\pi}}_{t}$ denotes the flow of $X_{f\circ\pi}\,.$ 
\end{lemma}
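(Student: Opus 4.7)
The plan is to unpack the defining equation $\omega(X_{f\circ\pi},\,\cdot\,) = d(f\circ\pi)$ under Dombrowski's splitting \eqref{equation Dombrowski}, read off the components of $X_{f\circ\pi}$ by comparing coefficients, and then verify the proposed flow formula directly. I first identify the differential of $f\circ\pi$: since the horizontal component of $A_{u_x}=(u_x,v_x,w_x)$ is precisely $\pi_{*}A_{u_x}=v_x$, the chain rule gives
\[
d(f\circ\pi)_{u_x}(u_x,v_x,w_x) \;=\; df_{x}(v_x) \;=\; h_x\!\big(\textup{grad}^{h}(f)_x,v_x\big).
\]

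Next I write $X_{f\circ\pi}$ at $u_x$ in the splitting as $(u_x,a_x,b_x)$ with unknowns $a_x,b_x\in T_xM$ and plug into the formula for $\omega$ from \eqref{equation definition G, omega, etc.}:
\[
\omega_{u_x}\!\big((u_x,a_x,b_x),(u_x,v_x,w_x)\big) \;=\; h_x(a_x,w_x)-h_x(b_x,v_x).
\]
Equating this to $h_x(\textup{grad}^{h}(f)_x,v_x)$ for every $v_x,w_x\in T_xM$, the coefficient of $w_x$ forces $a_x=0$ and the coefficient of $v_x$ forces $b_x=-\textup{grad}^{h}(f)_x$. This yields the first formula of the lemma.

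For the flow formula, I would verify directly that the curve $\gamma(t):=u_x-t\,\textup{grad}^{h}(f)_x$, which lies entirely inside the fiber $T_xM$, is the integral curve of $X_{f\circ\pi}$ issued from $u_x$. Since $\pi\circ\gamma$ is constant equal to $x$, one has $\pi_{*}\dot\gamma(t)=0$, so the horizontal component of $\dot\gamma(t)$ vanishes. The canonical connector $K$ restricted to vertical vectors is the identification $V_{u_x}TM\cong T_xM$ sending the velocity of a fiber-curve to its derivative in the vector space $T_xM$; hence $K\dot\gamma(t)=-\textup{grad}^{h}(f)_x$. Consequently $\dot\gamma(t)=(\gamma(t),0,-\textup{grad}^{h}(f)_x)=(X_{f\circ\pi})_{\gamma(t)}$ by the formula just established, and uniqueness of integral curves concludes the argument.

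The only genuinely delicate point is the identification of the vertical component of $\dot\gamma(t)$ through the canonical connector $K$. This is standard, but it depends on the sign convention for $K$, so I would recall that convention once at the outset and quote it from \cite{Lang}. Everything else reduces to linear algebra on the fibers of the splitting $T(TM)\cong TM\oplus TM\oplus TM$.
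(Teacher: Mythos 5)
Your proof is correct and follows essentially the same route as the paper's: compare the two expressions for $d(f\circ\pi)_{u_{x}}$ (one via $\omega(X_{f\circ\pi},\cdot)$ in the Dombrowski splitting, one via the chain rule and $\textup{grad}^{h}(f)$) and read off the horizontal and vertical components by nondegeneracy of $h$. The paper dismisses the flow formula as an easy consequence of the first identity; your explicit verification via the fiber curve $\gamma(t)=u_{x}-t\,\textup{grad}^{h}(f)_{x}$ and uniqueness of integral curves is exactly the intended argument.
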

\begin{proof}
	For $A_{u_{x}}\in T_{u_{x}}TM\,,$ we have by definition of $\omega$ 
	(see \eqref{equation definition G, omega, etc.}) and $\textup{grad}^{h}(f)\,:$
	\begin{eqnarray}
		\bullet	&&(f\circ\pi)_{*_{u_{x}}}A_{u_{x}}
				=\omega\big((X_{f\circ\pi})_{u_{x}},A_{u_{x}}\big)\nonumber\\
			&& =h\big(\pi_{*_{u_{x}}}(X_{f\circ\pi})_{u_{x}},KA_{u_{x}}\big)-
				h\big(\pi_{*_{u_{x}}}A_{u_{x}},K(X_{f\circ\pi})_{u_{x}}\big)\,,
				\label{equation que diiiiire???}\\
		\bullet	&&(f\circ \pi)_{*_{u_{x}}}A_{u_{x}}
			=f_{*_{x}}\pi_{*_{u_{x}}}A_{u_{x}}=h\big(\textup{grad}^{h}(f)_{x},
			\pi_{*_{u_{x}}}A_{u_{x}}\big)\,.\label{equation petit cafe?}
	\end{eqnarray}
	Comparing \eqref{equation que diiiiire???} and \eqref{equation petit cafe?}, we get 
	$\pi_{*_{u_{x}}}(X_{f\circ\pi})_{u_{x}}=0$ and $K(X_{f\circ\pi})_{u_{x}}=-\textup{grad}^{h}(f)_{x}\,,$
	which, in view of the identification given in \eqref{equation Dombrowski}, implies 
	the first equation in \eqref{equation le gradient, il est la}. 

	The second equation in \eqref{j'ecoute silent hill2} is an easy consequence 
	of the first. 
	The lemma follows. 
\end{proof}
\begin{remark}\label{rrr mal au bide}
	A simple consequence of Lemma \ref{lemma j'ecoute la chanson etrange} 
	is that $\mathscr{K}(M)\,,$ viewed as a Lie subalgebra 
	of $\mathscr{K}(TM)\,,$ is commutative.
\end{remark}
\begin{proof}[Proof of Proposition \ref{proposition equivalence kahler parallel}]
	In this proof, we use the identification given in \eqref{equation Dombrowski} as well as the notation
	introduced in Lemma \ref{lemma j'ecoute la chanson etrange}. 
	
	Let $A_{u_{x}}=(u_{x},v_{v},w_{x})$ and $\overline{A}_{u_{x}}=
	({u}_{x},\overline{v}_{x},\overline{w}_{x})$ be two tangent vectors in $T_{u_{x}}TM\,.$
	Since $(\pi\circ \varphi^{X_{f\circ\pi}}_{t})(u_{x})=\pi(u_{x}-t\,\textup{grad}^{h}(f)_{x})=x$ 
	(see Lemma \ref{lemma j'ecoute la chanson etrange}),
	$\pi_{*}(\varphi^{X_{f\circ\pi}}_{t})_{*_{u_{X}}}A_{u_{x}}=\pi_{*_{u_{x}}}(u_{x},v_{x},w_{x})=v_{x}$ 
	and thus, recalling the definition of $g$ given in \eqref{equation definition G, omega, etc.}, 
	\begin{eqnarray}
		&&\Big((\varphi^{X_{f\circ\pi}}_{t})^{*}g\Big)_{u_{x}}(A_{u_{x}},\overline{A}_{u_{x}})
			=g\Big((\varphi^{X_{f\circ\pi}}_{t})_{*_{u_{x}}}A_{u_{x}},
			(\varphi^{X_{f\circ\pi}}_{t})_{*_{u_{x}}}\overline{A}_{u_{x}}\Big)\nonumber\\
		&=&h(v_{x},\overline{v}_{x})+h\Big(K(\varphi^{X_{f\circ\pi}}_{t})_{*_{u_{x}}}A_{u_{x}},
			K(\varphi^{X_{f\circ\pi}}_{t})_{*_{u_{x}}}\overline{A}_{u_{x}}\Big)\,.
			\label{equation chanson j'aime moins}
	\end{eqnarray}
	We have to compute $K(\varphi^{X_{f\circ\pi}}_{t})_{*_{u_{x}}}{A}_{u_{x}}\,.$ For this, 
	observe that
	\begin{eqnarray}
		&& K(\varphi^{X_{f\circ\pi}}_{t})_{*_{u_{x}}}{A}_{u_{x}}=
			K(\varphi^{X_{f\circ\pi}}_{t})_{*_{u_{x}}}(u_{x},v_{x},w_{x})\nonumber\\
		&&=K(\varphi^{X_{f\circ\pi}}_{t})_{*_{u_{x}}}(u_{x},v_{x},0)	
			+K(\varphi^{X_{f\circ\pi}}_{t})_{*_{u_{x}}}(u_{x},0,w_{x})\,.
			\label{equation encore chansonssss}
	\end{eqnarray}
	The second term in \eqref{equation encore chansonssss} is easily computed:
	\begin{eqnarray}
		K(\varphi^{X_{f\circ\pi}}_{t})_{*_{u_{x}}}(u_{x},0,w_{x})=K\dfrac{d}{ds}\bigg\vert_{0}\,
			\varphi^{X_{f\circ\pi}}_{t}(u_{x}+sw_{x})
			=K\dfrac{d}{ds}\bigg\vert_{0}\,(u_{x}+sw_{x}-t\,\textup{grad}^{h}(f)_{x})=w_{x}\,.
			\label{trop belle cette chanson!!!}
	\end{eqnarray}
	For the first term in \eqref{equation encore chansonssss}, we introduce a curve $V(s)$ in $TM$ such that
	$dV(s)/ds\vert_{0}=(u_{x},v_{x},0)$ and such that $V(s)$ is horizontal for all $s\,.$ Observe that 
	$d(\pi\circ V(s))/ds\vert_{0}=\pi_{*}(u_{x},v_{x},w_{x})=v_{x}\,.$ Using this curve, we see that 
	\begin{eqnarray}
		&&K(\varphi^{X_{f\circ\pi}}_{t})_{*_{u_{x}}}(u_{x},v_{x},0)=K\dfrac{d}{ds}\bigg\vert_{0}\,
			\varphi^{X_{f\circ\pi}}_{t}\big(V(s)\big)=
			K\dfrac{d}{ds}\bigg\vert_{0}\,\big(V(s)-t\,\textup{grad}^{h}(f)_{\pi(V(s))}\big)\nonumber\\
			&=&\nabla_{v_{x}}\big(V(s)-t\,\textup{grad}^{h}(f)_{\pi(V(s))}\big)
				=\nabla_{v_{x}}V(s)-t\nabla_{v_{x}}\textup{grad}^{h}(f)_{\pi(V(s))}\nonumber\\
		&=&	-t\nabla_{v_{x}}\textup{grad}^{h}(f)_{\pi(V(s))}\,.\label{vraiment trop bellllllle}
	\end{eqnarray}
	Now, \eqref{equation chanson j'aime moins}, \eqref{equation encore chansonssss}, 
	\eqref{trop belle cette chanson!!!} and \eqref{vraiment trop bellllllle} yield
	\begin{eqnarray}
		\Big((\varphi^{X_{f\circ\pi}}_{t})^{*}g\Big)_{u_{x}}(A_{u_{x}},\overline{A}_{u_{x}})&=&
			g(A_{u_{x}},\overline{A}_{u_{x}})-t\Big(h\big(\nabla_{v_{x}}\textup{grad}^{h}(f),\overline{w}_{x}\big)+
			h\big(\nabla_{\overline{v}_{x}}\textup{grad}^{h}(f),{w}_{x}\big)\Big)\nonumber\\
		&& +\,\,t^{2}h\big(\nabla_{v_{x}}\textup{grad}^{h}(f),\nabla_{\overline{v}_{x}}\textup{grad}^{h}(f)\big)
	\end{eqnarray}
	from which we clearly see that $\varphi^{X_{f\circ\pi}}_{t}$ is an isometry for all $t$ if and only if 
	$\textup{grad}^{h}(f)$ is $\nabla$-parallel. The proposition follows. 
\end{proof}	

	Let us now specialize to the case of an exponential family. So, let $\mathcal{E}$ 
	be an exponential family defined on a measured space $(\Omega,dx)$ 
	with elements of the form $p(x;\theta)=
	\textup{exp}\big\{C(x)+\sum_{i=1}^{n}\,\theta_{i}F_{i}(x)-\psi(\theta)\big\}$ as in 
	\eqref{equation definiton exp}, and let us consider the following space of functions:
	\begin{eqnarray}\label{eee definition les observables...}
		\mathcal{A}_{\mathcal{E}}:=
			\textup{Vect}_{\mathbb{R}}\big\{1,F_{1},...,F_{n}\big\}\,,
	\end{eqnarray}
	i.e., $\mathcal{A}_{\mathcal{E}}$ is the real vector space generated by the constant function 1 
	and the functions $F_{1},...,F_{n}\,:\,\Omega\rightarrow\mathbb{R}\,.$
\begin{proposition}[]\label{proposition c'est chouette nom}
	For a function $f\,:\,\mathcal{E}\rightarrow\mathbb{R}\,,$ we have:
	\begin{eqnarray}
		f\,\,\textup{is a K\"{a}hler function}\,\,\,\,\,\Leftrightarrow\,\,\,\,\,
		\Big(\exists \, X\in \mathcal{A}_{\mathcal{E}}\,:\,f(p)=\int_{\Omega}\,X(x)\,p(x)\,dx\,\,\,
		\forall p\in \mathcal{E}\Big).
	\end{eqnarray}
\end{proposition}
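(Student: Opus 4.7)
The plan is to combine Proposition \ref{proposition equivalence kahler parallel} (Kähler $\Leftrightarrow$ gradient is $\nabla^{(e)}$-parallel) with Proposition \ref{proposition proprietes fam exp} (dual affine coordinates $\theta,\eta$ with $h_F(\partial_{\theta_i},\partial_{\eta_j})=\delta_{ij}$) to reduce the Kähler condition to an explicit affine-linearity condition in the expectation coordinates $\eta$, and then recognize that affine functions in $\eta$ are exactly the expectations of elements of $\mathcal{A}_{\mathcal{E}}$.

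First, I would express the Riemannian gradient of $f$ in the two dual frames. Since $\{\partial_{\theta_i}\}$ and $\{\partial_{\eta_j}\}$ are dual with respect to $h_F$, writing $\textup{grad}^{h_F}(f)=\sum_i a^i\partial_{\theta_i}$ and pairing with $\partial_{\eta_j}$ gives $a^j=df(\partial_{\eta_j})=\partial f/\partial \eta_j$, so
\begin{eqnarray}
\textup{grad}^{h_F}(f)=\sum_{j=1}^{n}\frac{\partial f}{\partial \eta_{j}}\,\partial_{\theta_{j}}.
\end{eqnarray}
Because $\theta$ is a $\nabla^{(e)}$-affine chart, the coordinate vector fields $\partial_{\theta_j}$ are $\nabla^{(e)}$-parallel, so $\textup{grad}^{h_F}(f)$ is $\nabla^{(e)}$-parallel if and only if each coefficient $\partial f/\partial \eta_j$ is constant on $\mathcal{E}$, i.e. $f$ is affine in the $\eta$-coordinates: $f=c_{0}+\sum_{j}c_{j}\eta_{j}$ for some constants $c_{0},c_{1},\ldots,c_{n}\in\mathbb{R}$.

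Next, I would identify this affine form with an expectation. Using $\eta_j(p_\theta)=\int_\Omega F_j(x) p_\theta(x)\,dx$ and $\int_\Omega p_\theta(x)\,dx = 1$, set $X:=c_{0}+\sum_{j=1}^{n}c_{j}F_{j}\in \mathcal{A}_{\mathcal{E}}$; then
\begin{eqnarray}
f(p_\theta)=c_{0}+\sum_{j=1}^{n}c_{j}\eta_{j}(p_\theta)=\int_{\Omega}X(x)\,p_{\theta}(x)\,dx,
\end{eqnarray}
giving the forward implication via Proposition \ref{proposition equivalence kahler parallel}. For the converse, given $X=c_{0}+\sum_{j}c_{j}F_{j}\in\mathcal{A}_{\mathcal{E}}$, the same computation shows $f(p)=c_{0}+\sum_{j}c_{j}\eta_{j}(p)$, which is manifestly affine in $\eta$; hence $\partial f/\partial\eta_j=c_j$ is constant, $\textup{grad}^{h_F}(f)$ is $\nabla^{(e)}$-parallel, and $f$ is Kähler.

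The only delicate point is the very first step, namely rewriting $\textup{grad}^{h_F}(f)$ in the $\theta$-frame by pairing with the dual $\eta$-frame; once the biorthogonality relation \eqref{equation etrange, et curieuse} is invoked, the rest is bookkeeping. There is no real obstacle beyond making sure that the linear independence of $\{1,F_1,\ldots,F_n\}$ (assumed in Definition \ref{definition exp}) guarantees the map from coefficient tuples $(c_0,\ldots,c_n)$ to functions $X$, and hence the representation of $f$, behaves well, though for the statement itself only existence of $X$ is required.
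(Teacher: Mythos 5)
Your proposal is correct and follows essentially the same route as the paper: both reduce the Kähler condition via Proposition \ref{proposition equivalence kahler parallel} to $\nabla^{(e)}$-parallelism of $\textup{grad}^{h_{F}}(f)$, use the $\nabla^{(e)}$-affineness of $\theta$ and the duality $h_{F}(\partial_{\theta_{i}},\partial_{\eta_{j}})=\delta_{ij}$ to conclude that $f$ is affine in the expectation parameters, and then identify such affine functions with expectations of elements of $\mathcal{A}_{\mathcal{E}}$. The only cosmetic difference is that you re-derive inline the duality fact that the paper packages as Lemma \ref{bientot le week end!!!} ($\textup{grad}^{h_{F}}(\eta_{i})=\partial_{\theta_{i}}$).
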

	In order to show Proposition \ref{proposition c'est chouette nom}, we need the following lemma.
\begin{lemma}\label{bientot le week end!!!}
	The expectation parameters $\eta_{i}\,:\,\mathcal{E}\rightarrow \mathbb{R}\,,p\mapsto
	\int_{\Omega}\,F_{i}(x)p(x)dx$ satisfy the following relation:
	\begin{eqnarray}
		\textup{grad}^{h_{F}}(\eta_{i})=\partial_{\theta_{i}}\,.
	\end{eqnarray}
	In particular, expectation parameters are K\"{a}hler functions. 
\end{lemma}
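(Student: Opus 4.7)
The plan is to extract everything from the duality relation $h_{F}(\partial_{\theta_{i}},\partial_{\eta_{j}})=\delta_{ij}$ of Proposition \ref{proposition proprietes fam exp} together with the fact that the $\theta_{i}$ are $\nabla^{(e)}$-affine coordinates. There are two statements to prove, and both are essentially immediate once these ingredients are in place.

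First I would verify the gradient identity $\textup{grad}^{h_{F}}(\eta_{i})=\partial_{\theta_{i}}$. By definition of the Riemannian gradient, this amounts to checking that $h_{F}(\partial_{\theta_{i}},Y)=d\eta_{i}(Y)$ for every tangent vector $Y$ on $\mathcal{E}\,.$ Since $\eta=(\eta_{1},\dots,\eta_{n})$ is a global chart, it suffices to test against the coordinate vector fields $Y=\partial_{\eta_{j}}\,,$ for which the right-hand side is $d\eta_{i}(\partial_{\eta_{j}})=\delta_{ij}\,.$ The left-hand side equals $\delta_{ij}$ as well, directly by \eqref{equation etrange, et curieuse} from Proposition \ref{proposition proprietes fam exp}. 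So the two $1$-forms $h_{F}(\partial_{\theta_{i}},\cdot)$ and $d\eta_{i}$ agree on a basis, hence everywhere, and the identification of gradients follows.

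Next I would conclude that $\eta_{i}$ is a K\"{a}hler function via Proposition \ref{proposition equivalence kahler parallel}: it is enough to check that $\textup{grad}^{h_{F}}(\eta_{i})=\partial_{\theta_{i}}$ is $\nabla^{(e)}$-parallel. But $(\theta_{1},\dots,\theta_{n})$ is a $\nabla^{(e)}$-affine chart by Proposition \ref{proposition proprietes fam exp}, so all Christoffel symbols of $\nabla^{(e)}$ vanish in this chart, and the coordinate vector fields $\partial_{\theta_{i}}$ are $\nabla^{(e)}$-parallel by definition. Invoking Proposition \ref{proposition equivalence kahler parallel} then finishes the argument.

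There is no real obstacle here: the whole proof is a direct manipulation of the duality relation \eqref{equation etrange, et curieuse} and the defining property of affine coordinates. The only small point deserving care is to note that the $\partial_{\eta_{j}}$ indeed form a frame (which is guaranteed by the standing assumption that $\eta$ is a global diffeomorphic chart), so that the equality $h_{F}(\partial_{\theta_{i}},\partial_{\eta_{j}})=\delta_{ij}=d\eta_{i}(\partial_{\eta_{j}})$ can be promoted from a pointwise basis computation to an identity of $1$-forms.
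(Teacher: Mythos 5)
Your proof is correct and takes essentially the same route as the paper: both rest entirely on the duality relation \eqref{equation etrange, et curieuse}, the paper computing $\textup{grad}^{h_{F}}(\eta_{i})$ in the $\eta$-coordinate frame and converting to the $\theta$-frame via $\partial_{\eta_{b}}=\sum_{k}(h_{F})^{bk}\partial_{\theta_{k}}$, while you test the defining identity $h_{F}(\partial_{\theta_{i}},\cdot)=d\eta_{i}$ directly on the frame $\partial_{\eta_{j}}$ -- a marginally more invariant packaging of the same computation. Your explicit final step (the $\partial_{\theta_{i}}$ are $\nabla^{(e)}$-parallel because the Christoffel symbols vanish in the affine chart, then invoke Proposition \ref{proposition equivalence kahler parallel}) is precisely what the paper leaves implicit, so there is no gap.
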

\begin{proof}
	Using the duality between the natural and expectation parameters (see 
	\eqref{equation etrange, et curieuse}), one easily sees that 
	$\partial_{\eta_{i}}=\sum_{k=1}^{n}\,(h_{F})^{ik}\partial_{\theta_{k}}\,,$ where the $(h_{F})^{ik}$'s
	are the coefficients of the inverse of the matrix 
	$(h_{F})_{ij}:=h_{F}(\partial_{\theta_{i}},\partial_{\theta_{j}})\,,$ and also that 
	$h_{F}(\partial_{\eta_{i}},\partial_{\eta_{j}})=(h_{F})^{ij}\,.$ It follows that  
	\begin{eqnarray}
		\textup{grad}^{h_{F}}(\eta_{i})&=&\sum_{a,b=1}^{n}\,(h_{F})_{ab}
			\dfrac{\partial \eta_{i}}{\partial_{\eta_{a}}}\partial_{\eta_{b}}=
			\sum_{a,b=1}^{n}\,(h_{F})_{ab}\delta_{ia}\partial_{\eta_{b}}
			=\sum_{b=1}^{n}\,(h_{F})_{ib}\partial_{\eta_{b}}\nonumber\\
		&=&\sum_{b,k=1}^{n}\,(h_{F})_{ib}(h_{F})^{bk}\partial_{\theta_{k}}
			=\sum_{k=1}^{n}\,\delta_{ik}\partial_{\theta_{k}}=\partial_{\theta_{i}}\,,
	\end{eqnarray}
	which is the desired relation. 
\end{proof}
\begin{proof}[Proof of Proposition \ref{proposition c'est chouette nom}]
	Let $f\,:\,\mathcal{E}\rightarrow \mathbb{R}$ be a function. If $f$ is a K\"{a}hler function, then according 
	to Proposition \ref{proposition equivalence kahler parallel}, $\textup{grad}^{h_{F}}(f)$ 
	is $\nabla^{(e)}$-parallel, which means that when expressed in the 
	$\nabla^{(e)}$-affine chart $\theta=(\theta_{1},...,\theta_{n})\,,$ $\textup{grad}^{h_{F}}(f)$ 
	is a constant vector field. We can thus write 
	$\textup{grad}^{h_{F}}(f)=\sum_{i=1}^{n}a_{i}\partial_{\theta_{i}}\,,$ where 
	$a_{1},...,a_{n}$ are some real constants. But then, according to 
	Lemma \ref{bientot le week end!!!} and the definition of $\eta_{i}\,,$ 
	\begin{eqnarray}
		&&\textup{grad}^{h_{F}}(f)=\sum_{i=1}^{n}a_{i}\partial_{\theta_{i}}=
			\sum_{i=1}^{n}a_{i}\textup{grad}^{h_{F}}(\eta_{i})
			=\textup{grad}^{h_{F}}\Big(\sum_{i=1}^{n}\,a_{i}\eta_{i}\Big)\nonumber\\
		&&=\textup{grad}^{h_{F}}\Big(\sum_{i=1}^{n}\,a_{i}\int_{\Omega}\,F_{i}(x)p(x)dx\Big)
			=\textup{grad}^{h_{F}}\Big(\int_{\Omega}\,\sum_{i=1}^{n}\,a_{i}F_{i}(x)p(x)dx\Big)\,.
	\end{eqnarray}
	Hence, and up to an additive constant, $f(p)=\int_{\Omega}\,\sum_{i=1}^{n}\,a_{i}F_{i}(x)p(x)dx$
	which shows one direction of the proposition. 
	The other direction being trivial, the proposition follows. 
\end{proof}
	A direct consequence of Proposition \ref{proposition c'est chouette nom} is the following result:
\begin{corollary}\label{ccc les fonctions de la formes...}
	Functions of the form
	\begin{eqnarray}\label{remark la forme au'il faut}
		T\mathcal{E}\rightarrow\mathbb{R}\,,\,\,\,\,
		z\mapsto \int_{\Omega}\,X(x)\big[(\pi\circ \Phi)(z)\big](x)dx\,,	
	\end{eqnarray} 
	where $X\in \mathcal{A}_{\mathcal{E}}$ and where $\Phi\,:\,T\mathcal{E}\rightarrow T\mathcal{E}$ 
	is a holomorphic isometry, are K\"{a}hler functions on $T\mathcal{E}\,.$
\end{corollary}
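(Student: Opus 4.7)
The plan is to factor the function in \eqref{remark la forme au'il faut} as $(f\circ\pi)\circ\Phi$, where $f:\mathcal{E}\to\mathbb{R}$ is the ``expectation of $X$'' functional
\begin{equation*}
	f(p):=\int_{\Omega}X(x)p(x)\,dx,
\end{equation*}
and then to reduce the claim to two essentially independent facts. The first is that $f\circ\pi$ is itself a K\"{a}hler function on $T\mathcal{E}$: this is exactly Proposition \ref{proposition c'est chouette nom} combined with Definition \ref{definition 2}, applied to $X\in\mathcal{A}_{\mathcal{E}}$. The second is that $\mathscr{K}(T\mathcal{E})$ is stable under precomposition with any holomorphic isometry of $T\mathcal{E}$. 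Granting these, the corollary follows at once, since $[(\pi\circ\Phi)(z)](x)$ is by definition the density $\pi(\Phi(z))\in\mathcal{E}$ evaluated at $x$, so the integral in \eqref{remark la forme au'il faut} equals $(f\circ\pi\circ\Phi)(z)$.

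For the stability statement, I would argue as follows. A holomorphic isometry $\Phi$ preserves both $g$ and $J$, hence also the fundamental two-form $\omega$; in particular $\Phi$ is a symplectomorphism. From $\Phi^{*}\omega=\omega$ one obtains, by a pointwise one-line computation, the transformation law
\begin{equation*}
	X_{F\circ\Phi}=\Phi^{*}X_{F},
\end{equation*}
where $\Phi^{*}X_{F}$ denotes the pullback of the vector field $X_{F}$ by $\Phi$. Naturality of the Lie derivative under diffeomorphism together with $\Phi^{*}g=g$ then yields
\begin{equation*}
	\mathscr{L}_{X_{F\circ\Phi}}g=\mathscr{L}_{\Phi^{*}X_{F}}(\Phi^{*}g)=\Phi^{*}(\mathscr{L}_{X_{F}}g)=0,
\end{equation*}
so $F\circ\Phi\in\mathscr{K}(T\mathcal{E})$ whenever $F$ does.

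The main, and really only non-mechanical, step is the identity $X_{F\circ\Phi}=\Phi^{*}X_{F}$. It is a standard consequence of $\Phi$ being a symplectomorphism: for any tangent vector $v$ one computes
\begin{equation*}
	dF(\Phi_{*}v)=\omega(X_{F},\Phi_{*}v)=(\Phi^{*}\omega)((\Phi^{-1})_{*}X_{F},v)=\omega((\Phi^{-1})_{*}X_{F},v),
\end{equation*}
evaluated at the relevant base points, which forces $X_{F\circ\Phi}=\Phi^{*}X_{F}$. Everything else in the argument is bookkeeping and a direct appeal to Proposition \ref{proposition c'est chouette nom}.
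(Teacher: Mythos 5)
Your proof is correct and follows the route the paper intends: the paper states this corollary as a ``direct consequence'' of Proposition \ref{proposition c'est chouette nom}, and the implicit argument is precisely your factorization of the function as $(f\circ\pi)\circ\Phi$ with $f(p)=\int_{\Omega}X(x)p(x)\,dx$ a K\"{a}hler function on $\mathcal{E}$ (hence $f\circ\pi\in\mathscr{K}(T\mathcal{E})$ by Definition \ref{definition 2}), combined with the standard fact that $\mathscr{K}$ is stable under precomposition with holomorphic isometries. Your verification of that stability via $X_{F\circ\Phi}=\Phi^{*}X_{F}$ and naturality of the Lie derivative is exactly the detail the paper leaves to the reader.
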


%

\section{K\"{a}hlerification of an exponential family}\label{section il y en a pas deux}

	Let $\mathcal{E}$ be an exponential family and let $(g,J,\omega)$ be the K\"{a}hler 
	structure of $T\mathcal{E}\,.$ 
	We define a subgroup $\Gamma(\mathcal{E})$ 
	of the group of all diffeomorphisms $\textup{Diff}(T\mathcal{E})$  of 
	$T\mathcal{E}$ by letting 
	\begin{eqnarray}\label{eee definition gamma}
		\Gamma(\mathcal{E}):=\big\{\phi\in\textup{Diff}(T\mathcal{E})\,\big\vert\,
		\phi^{*}g=g\,,\,\,\phi_{*}\,J=J\,\phi_{*}\,\,\textup{and}\,\,f\circ \phi
		=f\,\,\textup{for all}\,\,f\in \mathscr{K}(T\mathcal{E})\big\}.
		\,\,\,\,\,\,\,\,\,\,\,\,\,\textbf{}
	\end{eqnarray}

\begin{definition}\label{definition kahlerification}
	Let $\mathcal{E}$ be an exponential family having a discrete $\Gamma(\mathcal{E})$ and whose 
	natural action\footnote{The natural action of $\Gamma(\mathcal{E})$ on 
	$T\mathcal{E}$ is simply given by 
	$\gamma\cdot u_{x}:=\gamma(u_{x})\,,$ where $\gamma\in \Gamma(\mathcal{E})$ and 
	$u_{x}\in T\mathcal{E}\,.$}on $T\mathcal{E}$ is free and proper. The quotient space  
	$T\mathcal{E}/\Gamma(\mathcal{E})$ is thus naturally a K\"{a}hler manifold for which 
	the quotient map $T\mathcal{E}\rightarrow T\mathcal{E}/\Gamma(\mathcal{E})$ becomes 
	a holomorphic Riemannian submersion. We shall call 
	this quotient the K\"{a}hlerification of $\mathcal{E}\,,$ and use the following notation: 
	\begin{eqnarray}
		\mathcal{E}^{\mathbb{C}}:=T\mathcal{E}/\Gamma(\mathcal{E})\,.
	\end{eqnarray}
\end{definition}
	At this point, it is worth mentioning that 
	in all the examples considered in this paper, the group $\Gamma(\mathcal{E})$ is
	discrete and that its natural action on $T\mathcal{E}$ is free and proper. 
	In the sequel, we will always assume that the exponential families under consideration 
	fulfill the conditions of Definition \ref{definition kahlerification}.\\ 
	
	Let us investigate the geometrical structure of $\mathcal{E}^{\mathbb{C}}\,.$
\begin{lemma}\label{lemme ca commute}
	For every $\gamma\in \Gamma(\mathcal{E})\,,$ we have
	\begin{eqnarray}
		\pi\circ \gamma=\pi\,,
	\end{eqnarray}
	where $\pi\,:\,T\mathcal{E}\rightarrow \mathcal{E}$ is the canonical projection. 
\end{lemma}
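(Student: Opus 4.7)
The plan is to exploit the third defining property of $\Gamma(\mathcal{E})$, namely that every element preserves all Kähler functions on $T\mathcal{E}$, and combine this with the fact that the expectation parameters form a global chart on $\mathcal{E}$.

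First I would note that by Definition \ref{definition 2}, every Kähler function $f$ on $\mathcal{E}$ (in the dually flat sense) lifts to the Kähler function $f \circ \pi$ on $T\mathcal{E}$. Hence, for any $\gamma \in \Gamma(\mathcal{E})$ and any $f \in \mathscr{K}(\mathcal{E})$, the invariance condition built into the definition of $\Gamma(\mathcal{E})$ gives
\begin{eqnarray}
f\big(\pi(\gamma(u_x))\big) = (f\circ\pi)\circ\gamma\,(u_x) = (f\circ\pi)(u_x) = f\big(\pi(u_x)\big)
\end{eqnarray}
for every $u_x \in T\mathcal{E}$.

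Next, I would invoke Proposition \ref{proposition c'est chouette nom} (or, more directly, Lemma \ref{bientot le week end!!!}) to conclude that each expectation parameter $\eta_i : \mathcal{E} \to \mathbb{R}$ belongs to $\mathscr{K}(\mathcal{E})$. Since $\eta = (\eta_1,\ldots,\eta_n)$ is by assumption a global chart of $\mathcal{E}$, the collection $\{\eta_i\}_{i=1}^n$ separates the points of $\mathcal{E}$. Applying the invariance above with $f = \eta_i$ for each $i$ yields $\eta_i(\pi(\gamma(u_x))) = \eta_i(\pi(u_x))$, and so $\pi(\gamma(u_x)) = \pi(u_x)$ for every $u_x$, which is the desired identity $\pi\circ\gamma = \pi$.

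There is really no serious obstacle here; the argument is essentially a bookkeeping exercise once one recognizes that the expectation parameters provide a complete set of Kähler functions detecting points of $\mathcal{E}$. The only subtlety worth spelling out is the passage from the definition of Kähler function on $\mathcal{E}$ (Definition \ref{definition 2}) to membership in $\mathscr{K}(T\mathcal{E})$, so that condition 3 in the definition \eqref{eee definition gamma} of $\Gamma(\mathcal{E})$ is applicable — but this is immediate by construction.
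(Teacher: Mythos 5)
Your argument is correct and is essentially the paper's own proof: both use Lemma \ref{bientot le week end!!!} to see that the expectation parameters $\eta_i$ are K\"ahler functions, apply the invariance condition in the definition of $\Gamma(\mathcal{E})$ to get $\eta_i\circ\pi\circ\gamma=\eta_i\circ\pi$, and conclude from the fact that $\eta=(\eta_1,\ldots,\eta_n)$ is a (hence injective) chart that $\pi\circ\gamma=\pi$. No discrepancies to report.
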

\begin{proof}
	Since expectation parameters $\eta_{i}\,:\,\mathcal{E}\rightarrow \mathbb{R}$ are 
	K\"{a}hler functions (see Lemma \ref{bientot le week end!!!}), and since  
	$\eta=(\eta_{1},...,\eta_{n})$ is a chart of $\mathcal{E}\,,$ we have by definition 
	of $\Gamma(\mathcal{E})\,,$
	\begin{eqnarray}
		&&\eta_{i}\circ \pi\circ\gamma=\eta_{i}\circ\pi\,\,\,\,\,\textup{for all}\,\,i\nonumber\\
		&\Rightarrow& \eta\big(\pi(\gamma(x))\big)=\eta\big(\pi(x)\big)\,\,\,\,\,\textup{for all}\,\,
			x\in T\mathcal{E}\,\nonumber\\
		&\Rightarrow& \pi\circ \gamma=\pi\,.
	\end{eqnarray}
	This is the desired relation. 
\end{proof}
	Lemma \ref{lemme ca commute} readily implies that the projection 
	$\pi\,:\,T\mathcal{E}\rightarrow\mathcal{E}$ factorizes through $\mathcal{E}^{\mathbb{C}}\,,$ 
	yielding a submersion $\mathcal{E}^{\mathbb{C}}\rightarrow \mathcal{E}$ that 
	we shall denote by $\pi_{\mathcal{E}}\,,$ or simply $\pi\,.$ A K\"{a}hlerification has thus the structure 
	of a fiber bundle induced by the submersion 
	\begin{eqnarray}
		\pi_{\mathcal{E}}\,:\,\mathcal{E}^{\mathbb{C}}\rightarrow\mathcal{E}\,,
	\end{eqnarray}
	whose fiber over $p\in \mathcal{E}$ is diffeomorphic to 
	$T_{p}\mathcal{E}/\Gamma(\mathcal{E})\,.$\\

	Clearly, $\pi_{\mathcal{E}}\,:\,\mathcal{E}^{\mathbb{C}}\rightarrow \mathcal{E}$ is a Riemannian submersion. Also, 
	$\mathscr{K}(T\mathcal{E})\cong \mathscr{K}(\mathcal{E}^{\mathbb{C}})$ (Lie algebra isomorphism), and   
	consequently there are analogues of Proposition \ref{proposition c'est chouette nom} and Corollary \ref{ccc les fonctions de la formes...} 
	for the space of K\"{a}hler functions 
	on $\mathcal{E}^{\mathbb{C}}\,.$

\section{K\"{a}hlerification of $\mathcal{P}_{n}^{\times}$ and complex projective spaces}
	\label{section the motivating example}\label{section khalerif jskjfdf}



	Recall from Example \ref{example finite omega} that $\mathcal{P}_{n}^{\times}$ is the space of 
	non-vanishing probability density functions $p$ defined on a finite set $\Omega=\{x_{1},...,x_{n}\}\,,$ 
	i.e., 
	\begin{eqnarray}
		\mathcal{P}_{n}^{\times}:=\Big\{p\,:\,\Omega\rightarrow \mathbb{R}\,\,\Big\vert\,\,
		p(x_{i})>0\,\,\,\textup{for all}\,\,\,x_{i}\in \Omega\,\,\,\textup{and}\,\,\,
		\sum_{i=1}^{n}p(x_{i})=1\Big\}\,.
	\end{eqnarray}
	
	This space is clearly a connected manifold of dimension $n-1\,,$ and as we 
	already saw, $\mathcal{P}_{n}^{\times}$ is an exponential family.
	
	In the context of information geometry, it is 
	customary to describe the tangent bundle of $\mathcal{P}_{n}^{\times}$ using the 
	\textit{exponential representation} :
	\begin{eqnarray}
		T_{p}\mathcal{P}_{n}^{\times}\cong \{u=(u_{1},...,u_{n})\in \mathbb{R}^{n}\,
		\vert\,u_{1}\,p_{1}+\ldots + u_{n}\,p_{n}=0\}\,,
	\end{eqnarray}
	where $p\in \mathcal{P}_{n}^{\times}\,,$ and where by definition, $p_{i}:=p(x_{i})$ 
	for all $x_{i}\in \Omega\,.$\\
	If $u\in \mathbb{R}^{n}$ is a vector satisfying  $u_{1}\,p_{1}+\ldots+u_{n}\,p_{n}=0$ 
	for a given probability density function $p\,:\,\Omega\rightarrow\mathbb{R}\,,$ 
	then we shall denote by $[u]_{p}$ the unique tangent vector 
	of $\mathcal{P}_{n}^{\times}$ at the point $p$ determined by the exponential representation.
	One easily sees that if $p(t)$ is a smooth curve in $\mathcal{P}_{n}^{\times}\,,$ 
	then
	\begin{eqnarray}\label{equation derivée plus identification expo}
	\frac{d}{dt}\bigg\vert_{0}p(t)=[u]_{p(0)}\,\,\,\,\,\Leftrightarrow\,\,\,\,\,
	\dfrac{d}{dt}\bigg\vert_{0}\,p_{i}(t)=p_{i}(0)\,u_{i}
	\,\,\,\text{for all}\,\,i=1,\cdots,n\,,
	\end{eqnarray}
	where $p_{i}(t):=\big(p(t)\big)(x_{i})\,.$\\
	Equation \eqref{equation derivée plus identification expo} is actually one way 
	to define the exponential representation. 

	In term of the exponential representation, the Fisher metric $h_{F}$ 
	has the following expression :
	\begin{eqnarray}\label{eee epression explicite fisher}
		(h_{F})_{p}([u]_{p},[v]_{p})=\sum_{i=1}^{n}p_{i}u_{i}v_{i}\,,
	\end{eqnarray}
	while the covariant derivative $D^{(e)}[V]_{p(t)}/dt$ of a vector field $[V]_{p(t)}$ along a curve 
	$p\,:I\subseteq \mathbb{R}\rightarrow \mathcal{P}_{n}^{\times}$ with respect to 
	the exponential connection $\nabla^{(e)}$ is given by 
	\begin{eqnarray}\label{eee bochum bientot}
		\dfrac{D^{(e)}}{dt}\big[V(t)\big]_{p(t)}=
		\big[\,\dot{V}(t)-E_{p(t)}\big(\dot{V}(t)\big)\,\big]_{p(t)}\,,
	\end{eqnarray}
	where $E_{p(t)}\big(\dot{V}(t)\big):=p_{1}(t)\,\dot{V}_{1}(t)+\cdots+ p_{n}(t)\,\dot{V}(t)$ 
	is the mean of the vector $\dot{V}(t)=\big(\dot{V}_{1}(t),...,\dot{V}_{n}(t)\big)$ 
	with respect to the probability 
	$p(t)$ and where $E_{p(t)}\big(\dot{V}(t)\big)$ in \eqref{eee bochum bientot} as to be understood as the vector 
	$\big(E_{p(t)}\big(\dot{V}(t)\big),...,E_{p(t)}\big(\dot{V}(t)\big)\big)\in \mathbb{R}^{n}\,.$ \\
	
	For later purposes, let us also give the following result which gives an 
	explicit description of the inverse of the map 
	$\Phi_{}\,:\,T(T\mathcal{P}_{n}^{\times})\rightarrow  
	T\mathcal{P}_{n}^{\times}\oplus T\mathcal{P}_{n}^{\times}
	\oplus T\mathcal{P}_{n}^{\times}$ introduced in \eqref{equation Dombrowski}.
	The proof may be found in \cite{Molitor-quantique}.
\begin{lemma}\label{lemme formule inverse identification}
	For $[u]_{p},[v]_{p},[w]_{p}
	\in T_{p}\mathcal{P}_{n}^{\times}\,,$ we have : 
	\begin{eqnarray}
		\Phi_{}^{-1}\big([u]_{p},[v]_{p},[w]_{p}\big) = 
		\dfrac{d}{dt}\bigg\vert_{0}\,
		\Big[u+tw-E_{p(t)}(u+tw)\Big]_{p(t)}\,,
	\end{eqnarray}
	where $p(t)$ is a smooth curve in $\mathcal{P}_{n}^{\times}$ satisfying $p(0)=p$ and 
	$dp(t)/dt\big\vert_{0}=[v]_{p}\,.$
\end{lemma}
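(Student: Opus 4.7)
The plan is to verify that the curve $\gamma(t) := [u + tw - E_{p(t)}(u+tw)]_{p(t)}$ in $T\mathcal{P}_n^{\times}$ is well-defined, and that its velocity at $t=0$ satisfies $\Phi(\dot\gamma(0)) = ([u]_p, [v]_p, [w]_p)$; since $\Phi$ is a bijection, this is equivalent to the claimed formula. Well-definedness amounts to checking that $u + tw - E_{p(t)}(u+tw)\cdot\mathbf{1}$ lies in the exponential representation of $T_{p(t)}\mathcal{P}_n^{\times}$, i.e., that its $p(t)$-mean vanishes; this is immediate, since subtracting the constant $E_{p(t)}(u+tw)$ exactly cancels $\sum_i p_i(t)(u_i+tw_i)$.

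The base point and horizontal component are essentially free. Because $u$ is in the exponential representation at $p$, we have $E_p(u)=0$, so $\gamma(0) = [u]_p$, matching the first slot of the triple. For the horizontal part, $\pi\circ\gamma(t) = p(t)$, and differentiating at $0$ gives $\pi_{\ast}\dot\gamma(0) = [v]_p$ by the defining property of the curve $p(t)$.

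The vertical component is the main computation and uses the definition of the canonical connector $K$: $K(\dot\gamma(0))$ equals the $\nabla^{(e)}$-covariant derivative $\frac{D^{(e)}}{dt}\big|_0 V(t)$ of the vector field $V(t) := u + tw - E_{p(t)}(u+tw)$ along $p(t)$. Applying formula \eqref{eee bochum bientot}, I would first compute $\dot V(0) = w - c\cdot\mathbf{1}$, where $c := \frac{d}{dt}\big|_0 E_{p(t)}(u+tw)$ is a scalar expressible via $\dot p_i(0)=p_i(0)v_i$. Taking the $p$-mean gives $E_p(\dot V(0)) = E_p(w) - c$, and then subtracting the constant vector $E_p(\dot V(0))\cdot\mathbf{1}$ from $\dot V(0)$ cancels the $c\cdot\mathbf{1}$ contribution entirely, leaving $w - E_p(w)\cdot\mathbf{1}$. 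The hypothesis $E_p(w)=0$ (exponential representation at $p$) then yields $K(\dot\gamma(0)) = [w]_p$, matching the third slot. The only mild subtlety, and the only place one could slip up, is to keep scalars and vectors in $\mathbb{R}^n$ distinct throughout — once this bookkeeping is in place, the cancellation of the $c\cdot\mathbf{1}$ terms is automatic and the verification is essentially a one-line computation.
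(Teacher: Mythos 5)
Your verification is correct and is exactly the computation one would expect: check the base point, the horizontal part via $\pi\circ\gamma=p(t)$, and the vertical part via the connector identity $K(\dot\gamma(0))=\tfrac{D^{(e)}}{dt}\big\vert_{0}V(t)$ together with formula \eqref{eee bochum bientot}, where the scalar term $c\cdot\mathbf{1}$ cancels and $E_{p}(w)=0$ finishes the job. The paper itself omits the proof and defers to \cite{Molitor-quantique}, so there is nothing to contrast with; your argument supplies the missing verification correctly.
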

	
	We now want to relate the natural K\"{a}hler 
	structure of $T\mathcal{P}_{n}^{\times}$ to the K\"{a}hler structure of the complex projective 
	space $\mathbb{P}(\mathbb{C}^{n})\,.$ To this end, and for the reader's convenience, 
	let us digress a little on $\mathbb{P}(\mathbb{C}^{n})\,.$

	Recall that the complex projective space $\mathbb{P}(\mathbb{C}^{n})$ is the quotient 
	$(\mathbb{C}^{n}-\{0\})/{\sim}\,,$ where the equivalence relation $``\sim"$ 
	is defined by 
	\begin{eqnarray}
		(z_{1},...,z_{n})\sim (w_{1},...,w_{n})\,\,\,\,\,\Leftrightarrow\,\,\,\,\,
		\exists \lambda\in \mathbb{C}-\{0\}: (z_{1},...,z_{n})=\lambda (w_{1},...,w_{n})\,.
	\end{eqnarray}
	For $z=(z_{1},...,z_{n})\in \mathbb{C}^{n}-\{0\}\,,$ we shall denote by $[z]=[z_{1},...,z_{n}]$
	the corresponding element of $\mathbb{P}(\mathbb{C}^{n})\,.$ One may 
	identify $[z]$ with the complex line $\mathbb{C}\cdot z\,.$

	The manifold structure of $\mathbb{P}(\mathbb{C}^{n})$ may be defined as follows. 
	For a vector $u=(u_{1},...,u_{n})\in \mathbb{C}^{n}$ such that 
	$|u|^{2}=\langle u,u\rangle=\overline{u}_{1}u_{1}+\cdots +\overline{u}_{n}u_{n}=1$ 
	(our convention for the Hermitian product $\langle\,,\,\rangle$ on $\mathbb{C}^{n}$
	is that $\langle\,,\,\rangle$ is linear in the second argument)\,,
	we define a chart $(U_{u},\phi_{u})$ of $\mathbb{P}(\mathbb{C}^{n})$ by letting
	\begin{eqnarray}\label{equation definition carte projective}
		\left \lbrace
			\begin{array}{cc}
				U_{u}:=\big\{[z]\in \mathbb{P}(\mathbb{C}^{n})\,\big\vert\, 
				[u]\cap [z]=\{0\}\big\}\,,\\
				\phi_{u}\,:\,U_{u}\rightarrow [u]^{\perp}\subseteq \mathbb{C}^{n}\,,
				[z]\mapsto\dfrac{1}{\langle u,z\rangle}\cdot z-u\,.
			\end{array}
		\right.
	\end{eqnarray}
	If $u$ varies among all the unit vectors in $\mathbb{C}^{n}\,,$ then the corresponding 
	charts $(U_{u},\phi_{u})$ form a holomorphic atlas for $\mathbb{P}(\mathbb{C}^{n})\,;$ the projective 
	space is thus a manifold of real dimension $2(n-1)\,,$ and using the above charts 
	we have the identification
	\begin{eqnarray}
		T_{[u]}\mathbb{P}(\mathbb{C}^{n})\cong [u]^{\perp}=\{w\in\mathbb{C}^{n}\,\big\vert\,
		\langle u,w\rangle=0\}\,.
	\end{eqnarray}
	
	The Fubini-Study metric $g_{FS}$ and the Fubini-Study symplectic form $\omega_{FS}$ 
	are now defined at the point $[u]$ in $\mathbb{P}(\mathbb{C}^{n})$ via the formulas : 
	\begin{eqnarray}\label{equation definition de fubini-study}
		\Big((\phi_{u}^{-1})^{*}g_{FS}\Big)_{0}(\xi_{1},\xi_{2}):=\textup{Re}\,
		\langle\xi_{1},\xi_{2}\rangle\,\,,\,\,\,\,\,\,
		\Big((\phi_{u}^{-1})^{*}\omega_{FS}\Big)_{0}(\xi_{1},\xi_{2}):=\textup{Im}\,
		\langle\xi_{1},\xi_{2}\rangle\,,
	\end{eqnarray}
	where $\xi_{1},\xi_{2}\in [u]^{\perp}\cong T_{[u]}\mathbb{P}(\mathbb{C}^{n})$ and where 
	``\textup{Re}" and ``\textup{Im}" stand for the real and imaginary parts.\\
	One may show that $g_{FS}$ and $\omega_{FS}$ are globally well defined
	on $\mathbb{P}(\mathbb{C}^{n})\,,$ and that $(g_{FS},J_{FS},\omega_{FS})$ is a K\"{a}hler 
	structure on $\mathbb{P}(\mathbb{C}^{n})\,,$ where $J_{FS}$ denotes the natural complex structure 
	of $\mathbb{P}(\mathbb{C}^{n})\,.$\\

	Now, consider the map
	\begin{eqnarray}\label{eee definition tau}
			\tau\,:\,T\mathcal{P}_{n}^{\times}\rightarrow \mathbb{P}(\mathbb{C}^{n})^{\times}\,,
			\,\,\,[u]_{p}\mapsto \big[\,\sqrt{p_{1}}\,e^{iu_{1}/2},...,
			\sqrt{p_{n}}\,e^{iu_{n}/2}\,\big]\,,
	\end{eqnarray}
	where $\mathbb{P}(\mathbb{C}^{n})^{\times}$ is the open subset of $\mathbb{P}(\mathbb{C}^{n})$ 
	defined by 
	\begin{eqnarray}
			\mathbb{P}(\mathbb{C}^{n})^{\times}:=
			\big\{\,[z_{1},...,z_{n}]\in \mathbb{P}(\mathbb{C}^{n})\,\big\vert\,z_{i}\neq 0\,\,\,
			\textup{for all}\,\,\,i=1,...,n\,\big\}\,.
	\end{eqnarray}
	In \cite{Molitor-quantique}, the following is shown:
\begin{proposition}[\cite{Molitor-quantique}]\label{proposition la clé du bordel}
	\textbf{}
	\begin{description}
		\item[$(i)$] The map $\tau\,:\,T\mathcal{P}_{n}^{\times}\rightarrow 
			\mathbb{P}(\mathbb{C}^{n})^{\times}$ is a universal covering map whose 
			deck transformation 
			group is isomorphic to $\mathbb{Z}^{n-1}\,,$
		\item[$(ii)$] the following relations hold :
			\begin{eqnarray}\label{equation origine de tout}
				\tau^{*}g_{FS}=\dfrac{1}{4}\,g\,,\,\,\,\,\,\tau^{*}\omega_{FS}=
				\dfrac{1}{4}\,\omega\,,\,\,\,\,\,
				\tau_{*}J=J_{FS}\,\tau_{*}\,,
			\end{eqnarray}
			where $(g,J,\omega)$ is the K\"{a}hler structure of $T\mathcal{P}_{n}^{\times}$ 
			associated to $(h_{F},\nabla^{(e)})$ 
			via Dombrowski's construction.
	\end{description}
\end{proposition}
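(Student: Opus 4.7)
The plan is to establish part (ii) first and then use it to prove part (i). The key technical tool is Lemma \ref{lemme formule inverse identification}, which lets us represent an arbitrary element $A\in T_{[u]_p}T\mathcal{P}_n^\times$ with Dombrowski components $([u]_p,[v]_p,[w]_p)$ as the velocity of the explicit curve $\gamma(t)=[u+tw-E_{p(t)}(u+tw)]_{p(t)}$, where $p(t)$ is any curve in $\mathcal{P}_n^\times$ with $p(0)=p$ and $\dot p(0)=[v]_p$. Pushing this curve forward by $\tau$ gives $\zeta(t)=\bigl(\sqrt{p_k(t)}\,e^{iU_k(t)/2}\bigr)_{k=1}^n$, and a direct computation using \eqref{equation derivée plus identification expo} yields
\begin{eqnarray*}
\dot\zeta_k(0)=\tfrac{1}{2}\,\zeta_k\cdot\bigl(v_k+i\,w_k-i\sum_j p_j v_j u_j\bigr).
\end{eqnarray*}

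For the metric relation, I would exploit that $|\zeta(0)|^2=\sum p_k=1$, so $\zeta(0)$ is a unit vector and the Fubini-Study norm of the induced tangent vector at $[\zeta(0)]$ is $|\dot\zeta|^2-|\langle\zeta,\dot\zeta\rangle|^2$. Using $\sum p_k v_k=\sum p_k w_k=0$, one checks that $\langle\zeta,\dot\zeta\rangle=-\tfrac{i}{2}\sum_j p_j v_j u_j$ is purely imaginary (confirming horizontality modulo the vertical $U(1)$ direction), and a short algebraic manipulation collapses the cross terms to give
\begin{eqnarray*}
g_{FS}(\tau_*A,\tau_*A)=\tfrac{1}{4}\bigl(h_F([v]_p,[v]_p)+h_F([w]_p,[w]_p)\bigr)=\tfrac{1}{4}g(A,A),
\end{eqnarray*}
which is exactly the first relation in \eqref{equation origine de tout}. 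The identities for $\omega_{FS}$ and $J_{FS}$ follow from the same pushforward formula: taking a second vector $B$ with components $(\cdot,[v']_p,[w']_p)$, the imaginary part of $\langle\dot\zeta,\dot\zeta'\rangle$ (minus the vertical piece) reproduces $\tfrac{1}{4}\omega$, and applying the standard multiplication-by-$i$ on $\dot\zeta_k$ matches the Dombrowski formula $J_{u_p}(u,v,w)=(u,-w,v)$ after one more use of Lemma \ref{lemme formule inverse identification}.

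For part (i), surjectivity of $\tau$ onto $\mathbb{P}(\mathbb{C}^n)^\times$ is explicit: given $[z]$ with $z_k=r_ke^{i\theta_k}$, set $p_k=r_k^2/\sum_j r_j^2$ and $u_k=2\theta_k-2\sum_j p_j\theta_j$. Since part (ii) shows $\tau$ is a local isometry (up to the factor $1/4$) between manifolds of equal dimension $2(n-1)$, it is a local diffeomorphism. To promote this to a covering, I compute the fiber over $[z]$: the conditions $\sqrt{p_k}e^{iu_k/2}=\lambda z_k$, $\sum p_k=1$, $\sum p_ku_k=0$ determine $p$ and $|\lambda|$ uniquely, while the phase of $\lambda$ and the lifts $u_k/2=\arg(\lambda z_k)+2\pi n_k$ leave an integer tuple $(n_1,\dots,n_n)\in\mathbb{Z}^n$ free. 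A simultaneous shift $n_k\mapsto n_k+m$ is absorbed by $\arg\lambda$ and gives the same $[u]_p$, so the fiber is $\mathbb{Z}^n/\mathbb{Z}\cong\mathbb{Z}^{n-1}$ and in particular discrete; combined with the local diffeomorphism property and a standard path-lifting argument, $\tau$ is a covering map. Finally, since $\mathcal{P}_n^\times$ is diffeomorphic to $\mathbb{R}^{n-1}$ via the natural parameters, $T\mathcal{P}_n^\times\cong\mathbb{R}^{2(n-1)}$ is simply connected, so $\tau$ is the universal cover and its deck group is isomorphic to the fiber $\mathbb{Z}^{n-1}$.

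The main obstacle, I expect, is the bookkeeping in the differentiation of $\tau\circ\gamma(t)$: one has to simultaneously track the Dombrowski decomposition, the exponential-representation constraints $\sum p_ku_k=\sum p_kv_k=\sum p_kw_k=0$, and the extra ``correction'' term $\sum p_jv_ju_j$ coming from the $E_{p(t)}$ subtraction in Lemma \ref{lemme formule inverse identification}. Getting this correction term to cancel against $|\langle\zeta,\dot\zeta\rangle|^2$ in the Fubini-Study projection formula is the crux that produces the clean factor $1/4$; everything else (the fiber count, the simple connectedness argument) is comparatively routine.
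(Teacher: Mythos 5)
The paper itself does not prove this proposition --- it is imported verbatim from \cite{Molitor-quantique} (note the citation in the proposition's header and the sentence ``In \cite{Molitor-quantique}, the following is shown''), so there is no in-paper argument to compare against. Your reconstruction is nevertheless correct and is in the same spirit as the computations the paper does carry out elsewhere (e.g.\ the proof of Proposition \ref{ppp cramer rao}, which uses exactly the curve $t\mapsto[u+tw-E_{p(t)}(u+tw)]_{p(t)}$ from Lemma \ref{lemme formule inverse identification} to push vectors through $\tau$). I checked the key formula: with $c:=\sum_j p_jv_ju_j$ one gets $\dot\zeta_k=\tfrac12\zeta_k(v_k+iw_k-ic)$, hence $\langle\zeta,\dot\zeta\rangle=-ic/2$, $|\dot\zeta|^2=\tfrac14(h_F(v,v)+h_F(w,w)+c^2)$, and the correction term $|\langle\zeta,\dot\zeta\rangle|^2=c^2/4$ cancels the spurious $c^2$, giving $\tau^*g_{FS}=\tfrac14 g$; the $\omega$ and $J$ identities follow the same way since the discrepancy $\tfrac12\zeta_k(id-c)$ in the $J$ computation lies in the $\mathbb{C}\zeta$ direction and projects to zero. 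The fiber count $\mathbb{Z}^n/\mathbb{Z}\cong\mathbb{Z}^{n-1}$ and the simple connectedness of $T\mathcal{P}_n^\times\cong\mathbb{R}^{2(n-1)}$ are also right.

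The one step you should tighten is the passage from ``local diffeomorphism with discrete fibers'' to ``covering map'': that implication is false in general (restrict a covering to a subset of the total space), and ``a standard path-lifting argument'' is not automatic for a mere local diffeomorphism. The clean fix, available here because everything is explicit, is to observe that $\tau$ is invariant under the free, properly discontinuous $\mathbb{Z}^{n-1}$-action by fiberwise translations $u_k\mapsto u_k+4\pi\bigl(n_k-\sum_j p_jn_j\bigr)$, and that the induced map on the quotient is a bijective local diffeomorphism, hence a diffeomorphism; this exhibits $\tau$ as the quotient projection of a properly discontinuous action, which is a covering. (Alternatively, invoke the fact that a local isometry from a complete Riemannian manifold onto a connected one is a covering, after checking completeness of $(T\mathcal{P}_n^\times,g)$.) With that repair the argument is complete.
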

\begin{remark}
	In \cite{Molitor-quantique}, we were defining the Fisher metric $h_{F}$ as being 
	the one considered in this paper,
	but multiplied by a factor $1/4\,.$ Because of that, the first two formulas
	in \eqref{equation origine de tout} differ from the corresponding formulas  in 
	\cite{Molitor-quantique}
	by a factor $1/4\,.$
\end{remark}
\begin{remark}
	Observe that every deck transformation of $T\mathcal{P}_{n}^{\times}$ has to be a holomorphic 
	isometry.
\end{remark}
	We can now state the main result of this section. 
\begin{proposition}\label{proposition le bon group indeed}
	The group $\Gamma(\mathcal{P}_{n}^{\times})$ coincides with the deck transformation group 
	of the universal covering map 
	$\tau\,:\,T\mathcal{P}_{n}^{\times}\rightarrow \mathbb{P}(\mathbb{C}^{n})^{\times}\,.$ In particular, 
	if we multiply both the Fubini-Study metric $g_{FS}$ and the Fubini-Study symplectic form 
	$\omega_{FS}$ by a factor 4, then 
	we get a natural identification of K\"{a}hler manifolds: 
	\begin{eqnarray}
		(\mathcal{P}_{n}^{\times})^{\mathbb{C}}\cong \mathbb{P}(\mathbb{C}^{n})^{\times}\,.
	\end{eqnarray}
	Moreover, in term of the above identification, the canonical projection 
	$\pi_{\mathcal{P}_{n}^{\times}}\,:\,(\mathcal{P}_{n}^{\times})^{\mathbb{C}}\rightarrow\mathcal{P}_{n}^{\times}$ becomes
\begin{eqnarray}\label{equation defintion de pipipi}
		\pi_{\mathcal{P}_{n}^{\times}}\,:\,\mathbb{P}(\mathbb{C}^{n})^{\times}\rightarrow \mathcal{P}_{n}^{\times}\,,\,\,\,\,
		\pi_{\mathcal{P}_{n}^{\times}}([z])(x_{k}):=\dfrac{z_{k}\overline{z_{k}}}{\langle z,z\rangle}\,.
\end{eqnarray}
\end{proposition}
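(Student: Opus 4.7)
The plan is to establish the two inclusions $D \subseteq \Gamma(\mathcal{P}_{n}^{\times})$ and $\Gamma(\mathcal{P}_{n}^{\times}) \subseteq D$, where $D$ denotes the deck transformation group of $\tau$. Once this is done, the identification $(\mathcal{P}_{n}^{\times})^{\mathbb{C}} \cong \mathbb{P}(\mathbb{C}^{n})^{\times}$ is just the statement that $\tau$ is the universal covering map (Proposition \ref{proposition la clé du bordel}$(i)$), and the formula for $\pi_{\mathcal{P}_{n}^{\times}}$ follows by inverting $\tau([u]_{p}) = [\sqrt{p_{1}}e^{iu_{1}/2},\ldots,\sqrt{p_{n}}e^{iu_{n}/2}]$: if $[z] = \tau([u]_{p})$ then $z_{k} = \lambda\sqrt{p_{k}}e^{iu_{k}/2}$ for some $\lambda\in\mathbb{C}^{*}$, so $p_{k} = |z_{k}|^{2}/\langle z,z\rangle$.

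For $D \subseteq \Gamma(\mathcal{P}_{n}^{\times})$, the metric and complex-structure compatibility $\delta^{*}g = g$, $\delta_{*}J = J\delta_{*}$ for any deck transformation $\delta$ follows immediately from $\tau\circ\delta = \tau$ together with Proposition \ref{proposition la clé du bordel}$(ii)$. The only substantive point is the invariance $f\circ\delta = f$ for every $f \in \mathscr{K}(T\mathcal{P}_{n}^{\times})$, which I would reduce to the statement that every Kähler function on $T\mathcal{P}_{n}^{\times}$ factors as $\tilde{f}\circ\tau$ for some $\tilde{f}\in\mathscr{K}(\mathbb{P}(\mathbb{C}^{n})^{\times})$; once this is known, the invariance is automatic.

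This descent of Kähler functions is the main obstacle, and my strategy is a dimension count for the injective Lie algebra homomorphism $\tau^{*}\colon \mathscr{K}(\mathbb{P}(\mathbb{C}^{n})^{\times}) \hookrightarrow \mathscr{K}(T\mathcal{P}_{n}^{\times})$ induced by the fact that $\tau$ is a local holomorphic isometry. On the one hand, Nomizu's extension theorem for local Killing vector fields, applied to the real-analytic simply connected manifold $\mathbb{P}(\mathbb{C}^{n})$, shows that the holomorphic Killing algebra of $\mathbb{P}(\mathbb{C}^{n})^{\times}$ equals $\mathfrak{pu}(n)$ of dimension $n^{2}-1$; simple connectedness of $\mathbb{P}(\mathbb{C}^{n})$ then makes each such field Hamiltonian, so $\dim\mathscr{K}(\mathbb{P}(\mathbb{C}^{n})^{\times}) = n^{2}$. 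On the other hand, a global Killing field $X$ on $T\mathcal{P}_{n}^{\times}$ agrees near any point with the $\tau$-pullback of some local Killing field on $\mathbb{P}(\mathbb{C}^{n})^{\times}$; applying Nomizu's uniqueness on the simply connected real-analytic Kähler manifold $T\mathcal{P}_{n}^{\times}$, this local agreement is global, so every such field descends. Combined with the fact that on the simply connected $T\mathcal{P}_{n}^{\times}$ every holomorphic Killing field is Hamiltonian (since $\iota_{X}\omega$ is closed, hence exact), this gives $\dim\mathscr{K}(T\mathcal{P}_{n}^{\times}) = n^{2}$ as well, so $\tau^{*}$ is an isomorphism.

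The reverse inclusion $\Gamma(\mathcal{P}_{n}^{\times}) \subseteq D$ uses that Kähler functions on $\mathbb{P}(\mathbb{C}^{n})$ separate points: under the spectral correspondence $\mathscr{K}(\mathbb{P}(\mathbb{C}^{n})) \cong \mathfrak{u}(n)$, each Hermitian matrix $A$ gives a Kähler function $f_{A}([z]) = \langle z, Az\rangle/\langle z,z\rangle$, and letting $A$ range over Hermitian matrices recovers all entries of the matrix $\big(\overline{z_{i}}z_{j}/\langle z,z\rangle\big)_{i,j}$, which is a complete invariant of $[z]$. For $\phi \in \Gamma(\mathcal{P}_{n}^{\times})$, the defining condition applied to $f_{A}\circ\tau \in \mathscr{K}(T\mathcal{P}_{n}^{\times})$ for every Hermitian $A$ forces $\tau\circ\phi = \tau$, i.e., $\phi \in D$, finishing the argument.
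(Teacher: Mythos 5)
Your proposal is correct, and its skeleton is the one the paper uses: everything reduces to showing that every K\"{a}hler function on $T\mathcal{P}_{n}^{\times}$ factors through $\tau$, after which the inclusion of the deck group into $\Gamma(\mathcal{P}_{n}^{\times})$ is automatic, the reverse inclusion follows from the separation of points by the functions $\xi^{A}$ (this is exactly Lemmas \ref{lemma comomentum map}, \ref{lemma comment le montrer?} and \ref{lemma necessairement!!!}), and the formula \eqref{equation defintion de pipipi} is obtained by inverting $\tau$. The one place where you genuinely deviate is how the descent lemma is closed. The paper's Lemma \ref{lemma la decomposition qu'il faut entre blabla...} works at the level of functions: it pushes $f$ forward on a covering neighborhood, extends the resulting Killing field to $\mathbb{P}(\mathbb{C}^{n})$ by Nomizu's theorem, adjusts the additive constants, and then glues the local K\"{a}hler extensions using the identity principle visible in \eqref{eee cafe encore} (two K\"{a}hler functions on $\mathbb{P}(\mathbb{C}^{n})$ that agree on an open set agree everywhere). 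You avoid the gluing by a dimension count: $\tau^{*}$ is injective on K\"{a}hler functions, $\dim\mathscr{K}(\mathbb{P}(\mathbb{C}^{n})^{\times})=n^{2}$ via Nomizu extension to $\mathbb{P}(\mathbb{C}^{n})$ and the identification of its Killing algebra with $\mathfrak{pu}(n)$, and $\dim\mathscr{K}(T\mathcal{P}_{n}^{\times})\leq n^{2}$ because any global Killing field upstairs coincides with the pullback of a Nomizu extension of its local pushforward, two Killing fields on a connected manifold being equal once they agree on an open set (note that this uniqueness needs only connectedness of $T\mathcal{P}_{n}^{\times}$, not the simple connectedness or analyticity you invoke). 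Both routes consume the same inputs -- Nomizu's extension theorem on the simply connected, complete $\mathbb{P}(\mathbb{C}^{n})$ and the explicit $\mathfrak{u}(n)$-description of $\mathscr{K}(\mathbb{P}(\mathbb{C}^{n}))$ -- so the difference is one of bookkeeping: the paper's gluing produces the factorization $f=\overline{f}\circ\tau$ explicitly (which is then reused elsewhere, e.g.\ in Proposition \ref{lll nouvel methodologie!!!}), while your count is marginally slicker but only yields the factorization abstractly, at the price of knowing $\dim\mathfrak{pu}(n)$ and checking that constants are the only K\"{a}hler functions with vanishing Hamiltonian field on each side.
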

	We will show Proposition \ref{proposition le bon group indeed} with a series of lemmas.

\begin{lemma}\label{lemma comomentum map}
	Let $\mathscr{K}\big(\mathbb{P}(\mathbb{C}^{n})\big)$ be the space of K\"{a}hler functions on 
	$\mathbb{P}(\mathbb{C}^{n})$ and let $\mathfrak{u}(n)$ be the space of complex $n\times n$ skew Hermitian 
	matrices. If $\mathscr{K}\big(\mathbb{P}(\mathbb{C}^{n})\big)$ is endowed with its natural 
	Poisson bracket $\{f,g\}:=\omega_{FS}(X_{f},X_{g})\,,$ then the map 
	$\mathfrak{u}(n)\rightarrow \mathscr{K}\big(\mathbb{P}(\mathbb{C}^{n})\big)\,,
		\,\,A\mapsto \xi^{A}\,,$ where 
	\begin{eqnarray}\label{eee cafe encore}
		\xi^{A}([z]):=\dfrac{i}{2}\dfrac{\langle z,A\cdot z \rangle}{\langle z,z\rangle}\,,\,\,\,\,\,\,(z\in \mathbb{C}^{n}-\{0\})
	\end{eqnarray}
	is a Lie algebra isomorphism. 
\end{lemma}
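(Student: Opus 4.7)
The strategy is to recognize $A\mapsto\xi^A$ as the comoment map for the natural left action of $U(n)$ on $\mathbb{P}(\mathbb{C}^n)$, which acts by holomorphic isometries of the Fubini--Study structure; Lie-algebra injectivity will follow by polarization and surjectivity by matching dimensions against the Killing algebra of $\mathbb{P}(\mathbb{C}^n)$. First I would check that $\xi^A$ is well defined on $\mathbb{P}(\mathbb{C}^n)$ (the ratio in \eqref{eee cafe encore} is homogeneous of degree zero in $z$) and real valued: since $A^{*}=-A$, one has $\overline{\langle z,Az\rangle}=\langle z,A^{*}z\rangle=-\langle z,Az\rangle$, so $\langle z,Az\rangle$ is purely imaginary and $\tfrac{i}{2}\langle z,Az\rangle$ is real. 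Linearity of $A\mapsto\xi^A$ is immediate from \eqref{eee cafe encore}.

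The central step is to show that $X_{\xi^A}$ coincides with the fundamental vector field $A^{\sharp}_{[z]}:=\left.\tfrac{d}{dt}\right|_{0}[e^{tA}z]$ of the $U(n)$-action; since $U(n)$ acts isometrically, $A^{\sharp}$ is Killing, and this automatically establishes that $\xi^A\in\mathscr{K}(\mathbb{P}(\mathbb{C}^n))$. For this I would work in the chart $(U_u,\phi_u)$ of \eqref{equation definition carte projective} centered at a unit vector $u$, where $T_{[u]}\mathbb{P}(\mathbb{C}^n)\cong[u]^{\perp}$ and $g_{FS},\omega_{FS}$ take the flat Hermitian form \eqref{equation definition de fubini-study}. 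Taking the representative $u+\zeta$ of $\phi_u^{-1}(\zeta)$ for $\zeta\in[u]^{\perp}$ (note $\langle u,u+\zeta\rangle=1$), a short differentiation of $\xi^A([u+\zeta])$ at $\zeta=0$ yields
\begin{eqnarray*}
d\xi^A_{[u]}(\xi)=\tfrac{i}{2}\bigl(\langle\xi,Au\rangle+\langle u,A\xi\rangle\bigr)=\mathrm{Im}\,\langle Au-\langle u,Au\rangle u,\,\xi\rangle,\qquad \xi\in[u]^{\perp}.
\end{eqnarray*}
Comparison with $\omega_{FS}(X_{\xi^A},\xi)=\mathrm{Im}\,\langle X_{\xi^A},\xi\rangle$ then identifies $(X_{\xi^A})_{[u]}$ with the orthogonal projection of $Au$ onto $[u]^{\perp}$; differentiating $\phi_u([e^{tA}u])$ at $t=0$ via the quotient rule produces the very same vector, establishing $X_{\xi^A}=A^{\sharp}$.

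With $X_{\xi^A}=A^{\sharp}$ in hand, the Lie-algebra morphism property $\{\xi^A,\xi^B\}=\xi^{[A,B]}$ falls out of a one-line computation: using unitarity of $e^{tB}$ to cancel the denominator and the convention $\{f,g\}=\omega(X_f,X_g)=X_g(f)$, one gets $\{\xi^A,\xi^B\}([z])=B^{\sharp}(\xi^A)_{[z]}=\tfrac{i}{2}\langle z,[A,B]z\rangle/\langle z,z\rangle=\xi^{[A,B]}([z])$, where the middle equality uses $\langle Bz,Az\rangle+\langle z,ABz\rangle=\langle z,[A,B]z\rangle$. For injectivity: if $\xi^A\equiv 0$, then $\langle z,Az\rangle=0$ for all $z$, and polarization combined with $A^{*}=-A$ forces $A=0$. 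For surjectivity I would invoke the classical fact that the isometry group of $(\mathbb{P}(\mathbb{C}^n),g_{FS})$ is $U(n)/U(1)$, so its Killing algebra is $\mathfrak{u}(n)/\mathbb{R}\cdot iI$; thus for any $f\in\mathscr{K}(\mathbb{P}(\mathbb{C}^n))$ the Killing field $X_f$ equals some $A^{\sharp}=X_{\xi^A}$, making $f-\xi^A$ a constant $c$ on the connected manifold $\mathbb{P}(\mathbb{C}^n)$, and since a direct evaluation gives $\xi^{-2ciI}=c$, we conclude $f=\xi^{A-2ciI}$. The main technical obstacle is the chart computation in the second paragraph --- correctly matching $X_{\xi^A}$ with $A^{\sharp}$ under the identification $T_{[u]}\mathbb{P}(\mathbb{C}^n)\cong[u]^{\perp}$; everything else is standard symplectic and Lie-algebraic bookkeeping once this identification is established.
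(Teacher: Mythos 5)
Your proposal is correct; the paper itself gives no argument for this lemma (it simply refers to Cirelli--Mani\`{a}--Pizzocchero), and what you have written is precisely the standard comoment-map proof that the cited reference contains: identify $X_{\xi^{A}}$ with the fundamental vector field $A^{\sharp}$ of the $U(n)$-action (your chart computation and the sign conventions $\omega(X_{f},\cdot)=df(\cdot)$, $\omega_{FS}=\mathrm{Im}\langle\,,\,\rangle$ do match up), deduce the Killing property, obtain $\{\xi^{A},\xi^{B}\}=\xi^{[A,B]}$ from unitary invariance of the denominator, injectivity by polarization, and surjectivity from the holomorphic Killing algebra of $\mathbb{P}(\mathbb{C}^{n})$ being $\mathfrak{u}(n)/\mathbb{R}\,iI$. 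The only nitpick is that the full isometry group of $g_{FS}$ is $PU(n)$ extended by the anti-holomorphic involution induced by complex conjugation, not $U(n)/U(1)$ itself; but since the Hamiltonian vector field of a K\"{a}hler function is automatically a \emph{holomorphic} Killing field and the Lie algebra of the isometry group is unaffected by the discrete factor, your surjectivity argument goes through unchanged.
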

\begin{proof}
	See for example \cite{Cirelli-Quantum}. 
\end{proof}	
\begin{lemma}\label{lemma comment le montrer?}
	Let $[z],[w]$ be two points in $\mathbb{P}(\mathbb{C}^{n})\,.$ We have:
	\begin{eqnarray}
		[z]=[w]\,\,\,\,\,\Leftrightarrow\,\,\,\,\,f([z])=f([w])\,\,\,\textup{for all}\,\,\,f\in
		\mathscr{K}\big(\mathbb{P}(\mathbb{C}^{n})\big)\,.
	\end{eqnarray}
\end{lemma}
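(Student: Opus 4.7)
The forward implication is immediate: if $[z]=[w]$ then every function agrees on them. For the converse, the plan is to use Lemma \ref{lemma comomentum map} to rephrase the hypothesis as an identity of matrix expectation values, and then invoke non-degeneracy of the Hilbert--Schmidt pairing on Hermitian matrices to conclude.

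I would first normalize by replacing $z$ and $w$ with the unit vectors $\hat{z}:=z/\|z\|$ and $\hat{w}:=w/\|w\|\,,$ which changes neither $[z]$ nor $[w]\,.$ By Lemma \ref{lemma comomentum map}, every K\"{a}hler function on $\mathbb{P}(\mathbb{C}^{n})$ is of the form $\xi^{A}$ for some $A\in\mathfrak{u}(n)\,,$ so the assumption $f([z])=f([w])$ for all K\"{a}hler $f$ becomes
\[
\langle\hat{z},A\hat{z}\rangle=\langle\hat{w},A\hat{w}\rangle \quad \text{for every } A\in\mathfrak{u}(n)\,.
\]
Since any Hermitian matrix $B$ can be written as $iA$ with $A\in\mathfrak{u}(n)$ (take $A=-iB$) and since the Hermitian product is linear in its second argument, the same equality extends to all Hermitian $B\,.$

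Rewriting each side as a trace, the condition becomes
\[
\operatorname{tr}\bigl(B\,(\hat{z}\hat{z}^{*}-\hat{w}\hat{w}^{*})\bigr)=0
\]
for every Hermitian matrix $B\,.$ The difference $\hat{z}\hat{z}^{*}-\hat{w}\hat{w}^{*}$ is itself Hermitian, so non-degeneracy of the real Hilbert--Schmidt pairing on the space of Hermitian matrices forces it to vanish. The rank-one orthogonal projectors $\hat{z}\hat{z}^{*}$ and $\hat{w}\hat{w}^{*}$ therefore coincide, from which $\hat{z}$ and $\hat{w}$ differ by a unit complex scalar, hence $[z]=[w]\,.$

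The argument is essentially direct once Lemma \ref{lemma comomentum map} is available. The only minor points deserving explicit verification are the passage from skew-Hermitian to Hermitian matrices via multiplication by $i\,,$ and the standard observation that expectation values on a unit vector completely determine its associated rank-one projector. I do not anticipate any serious obstacle beyond these routine checks.
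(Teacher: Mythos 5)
Your proof is correct and follows essentially the same route as the paper: both arguments use Lemma \ref{lemma comomentum map} to translate the hypothesis into equality of expectation values $\langle \hat{z},B\hat{z}\rangle=\langle \hat{w},B\hat{w}\rangle$ for all Hermitian $B\,,$ and then exploit the non-degeneracy of that pairing (the paper does this entrywise, deducing $\overline{z}_{a}z_{b}=\overline{w}_{a}w_{b}$ and finishing with polar decompositions, while you package the same information as $\hat{z}\hat{z}^{*}=\hat{w}\hat{w}^{*}$ via the Hilbert--Schmidt pairing and conclude from the equality of rank-one projectors). Your coordinate-free phrasing is a slightly cleaner way of carrying out the paper's final algebraic step, but it is not a different method.
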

\begin{proof}
	Let $z,w\in \mathbb{C}^{n}-\{0\}$ be two vectors. Using 
	Proposition \ref{lemma comomentum map} and especially \eqref{eee cafe encore}, it is easy 
	to see that if $f([z])=f([w])$ for all K\"{a}hler functions $f$ on 
	$\mathbb{P}(\mathbb{C}^{n})\,,$ then   
	\begin{eqnarray}\label{eee manger?! or not?}
		2\,\textup{Re}\Big(\sum_{a<b}\,A_{ab}(\overline{z}_{a}z_{b}-\overline{w}_{a}w_{b})\Big)
		+\sum_{a}A_{aa}(|z_{a}|^{2}-|w_{a}|^{2})=0\,,	
	\end{eqnarray}
	where $A_{ab}$ ($a,b=1,...,n\,,$ $a<b$)  are arbitrary complex numbers, 
	and where $A_{aa}$ ($a=1,...n$)  are arbitrary real numbers
	(one may think of $A_{ab}$ as the coefficients of a Hermitian matrix). 
	From \eqref{eee manger?! or not?}, we deduce that 
	\begin{eqnarray}
		\overline{z}_{a}z_{b}=\overline{w}_{a}w_{b}
	\end{eqnarray}
	for all $a,b=1,...,n\,.$ By introducing polar decompositions
	and with some algebraic manipulations, it is then easy to see that $z$ and $w$ 
	are collinear. The lemma follows.  
\end{proof}
\begin{lemma}\label{lemma la decomposition qu'il faut entre blabla...}
	A function $f\,:\,T\mathcal{P}_{n}^{\times}\rightarrow \mathbb{R}$ is a K\"{a}hler function if and only 
	if there exists a K\"{a}hler function $\overline{f}\,:\,
	\mathbb{P}(\mathbb{C}^{n})\rightarrow\mathbb{R}$ such that 
	\begin{eqnarray}
		f=\overline{f}\circ \tau\,.
	\end{eqnarray}
\end{lemma}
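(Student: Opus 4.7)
The two directions are asymmetric in difficulty.

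For $\Leftarrow$, by Proposition \ref{proposition la clé du bordel} the covering $\tau$ is a local biholomorphism satisfying $\tau^{*}g_{FS}=\tfrac{1}{4}g$ and $\tau^{*}\omega_{FS}=\tfrac{1}{4}\omega$. An elementary computation using $\iota_{V}(\tau^{*}\alpha)=\tau^{*}(\iota_{\tau_{*}V}\alpha)$ gives $X_{\overline{f}\circ\tau}=\tfrac{1}{4}\tau^{*}X_{\overline{f}}$, where $\tau^{*}X_{\overline{f}}$ denotes the pointwise lift via the local diffeomorphism $\tau$. Since $X_{\overline{f}}$ is Killing for $g_{FS}$ and the Killing property is local and invariant under constant rescalings of the metric, $X_{\overline{f}\circ\tau}$ is Killing for $g$, so $\overline{f}\circ\tau$ is a K\"ahler function on $T\mathcal{P}_{n}^{\times}$.

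For $\Rightarrow$, given a K\"ahler function $f$ on $T\mathcal{P}_{n}^{\times}$, I construct a K\"ahler function $\overline{f}$ on $\mathbb{P}(\mathbb{C}^{n})$ with $f=\overline{f}\circ\tau$. First, fix a small open set $U\subset T\mathcal{P}_{n}^{\times}$ on which $\tau|_{U}$ is a diffeomorphism onto $\tau(U)$, and push $X_{f}|_{U}$ forward to a local Killing vector field $Y_{0}$ on $\tau(U)\subset\mathbb{P}(\mathbb{C}^{n})^{\times}\subset\mathbb{P}(\mathbb{C}^{n})$. Since $\mathbb{P}(\mathbb{C}^{n})$ is simply connected and real-analytic, the classical Nomizu extension theorem for local infinitesimal isometries produces a unique global Killing vector field $Y$ on $\mathbb{P}(\mathbb{C}^{n})$ extending $Y_{0}$; because $H^{1}_{\mathrm{dR}}(\mathbb{P}(\mathbb{C}^{n}))=0$, $Y$ is globally Hamiltonian, say $Y=X_{\overline{f}_{0}}$ for some K\"ahler function $\overline{f}_{0}$. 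The pointwise lift $\tau^{*}Y$ on $T\mathcal{P}_{n}^{\times}$ is automatically deck-invariant (because $\tau\circ\gamma=\tau$ for every deck transformation $\gamma$) and coincides with $X_{f}$ on $U$; by real-analyticity of Killing vector fields on the connected real-analytic manifold $T\mathcal{P}_{n}^{\times}$, this equality propagates globally, giving $X_{f}=\tau^{*}Y$. The identity $X_{\overline{f}_{0}\circ\tau}=\tfrac{1}{4}\tau^{*}X_{\overline{f}_{0}}=\tfrac{1}{4}X_{f}$ then forces $f-4\overline{f}_{0}\circ\tau$ to be constant on $T\mathcal{P}_{n}^{\times}$; absorbing that constant $c$ and setting $\overline{f}:=4\overline{f}_{0}+c$, which is again K\"ahler since constants are, yields $f=\overline{f}\circ\tau$.

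The main obstacle is the extension step invoking Nomizu's theorem; both the real-analyticity of the Fubini--Study structure and the simple connectedness of $\mathbb{P}(\mathbb{C}^{n})$ are essential. Once the global extension $Y$ is in hand, the rest is routine bookkeeping with the factor $1/4$ and the additive constant $c$. A self-contained alternative would be a dimension count: show that the injection $\overline{f}\mapsto\overline{f}\circ\tau$ from $\mathscr{K}(\mathbb{P}(\mathbb{C}^{n}))$ into $\mathscr{K}(T\mathcal{P}_{n}^{\times})$ is surjective, using Lemma \ref{lemma comomentum map} to fix the former's dimension at $n^{2}$; but bounding the latter by $n^{2}$ without an analytic continuation argument equivalent to Nomizu's appears difficult.
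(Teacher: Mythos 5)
Your proof is correct and follows essentially the same route as the paper: push $X_{f}$ forward through a local inverse of $\tau$, extend the resulting local Killing field to all of $\mathbb{P}(\mathbb{C}^{n})$ via Nomizu's theorem, and realize the extension as the Hamiltonian vector field of a K\"{a}hler function. The only differences are cosmetic: you globalize by the identity theorem for Killing fields upstairs on $T\mathcal{P}_{n}^{\times}$, whereas the paper compares the local extensions $f_{U},f_{V}$ downstairs using the explicit form $\xi^{A}$ of K\"{a}hler functions on $\mathbb{P}(\mathbb{C}^{n})$, and you track the normalization factor $1/4$ more explicitly than the paper does.
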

\begin{proof}
	Let $f\,:\,T\mathcal{P}_{n}^{\times}\rightarrow\mathbb{R}$ be a K\"{a}hler function.
	Since $\tau\,:\,T\mathcal{P}_{n}^{\times}\rightarrow \mathbb{P}(\mathbb{C}^{n})^{\times}$ 
	is a covering map, for every $z\in T\mathcal{P}_{n}^{\times}\,,$ there exists an open and connected 
	set $U_{z}\subseteq T\mathcal{P}_{n}^{\times}$ containing $z$ and 
	such that the restriction of $\tau$ to $U_{z}$ becomes a diffeomorphism between $U_{z}$ and 
	$\tau(U_{z})\,.$ 
	Let us denote this restriction by $\tau\vert_{U_{z}}\,.$ According to $(ii)$ in Proposition 
	\ref{proposition la clé du bordel}, $\tau\vert_{U_{z}}$ is a holomorphic isometry; this implies that 
	the map $f\circ (\tau\vert_{U_{z}})^{-1}\,:\,\tau(U_{z})\rightarrow \mathbb{R}$
	is a K\"{a}hler function, which means in particular that $X_{f\circ (\tau\vert_{U_{z}})^{-1}}$ is a 
	Killing vector field on $\tau(U_{z})\,.$ But now, since $\mathbb{P}(\mathbb{C}^{n})$ 
	is a connected, simply connected and complete (in the Riemannian sense) K\"{a}hler manifold, 
	their exists a Killing vector field 
	on $\mathbb{P}(\mathbb{C}^{n})$ extending 
	$X_{f\circ (\tau\vert_{U_{z}})^{-1}}$ (see \cite{Nomizu}),
	and this Killing vector field is bound\footnote{It is well known that 
	every Killing vector field on $\mathbb{P}(\mathbb{C}^{n})$ can be realized as the Hamiltonian vector field 
	of an appropriate K\"{a}hler function.} to be a Hamiltonian vector field 
	$X_{f_{U_{z}}}$ for some K\"{a}hler function 
	$f_{U_{z}}\,:\,\mathbb{P}(\mathbb{C}^{n})\rightarrow\mathbb{R}\,,$ the latter being defined 
	only up to an additive constant. By choosing this constant appropriately, we thus get a
	K\"{a}hler function $f_{U_{z}}$ such that 
	\begin{eqnarray}
		f\vert_{U_{z}}=(f_{U_{z}}\circ \tau)\vert_{U_{z}}\,.
	\end{eqnarray}
	The above formula shows that the statement in the lemma is locally true. Let us now show that 
	it is also globally true. So let $U,V\subseteq T\mathcal{P}_{n}^{\times}$ be two connected 
	open sets whose intersection is not empty, and such that there exist two K\"{a}hler functions 
	$f_{U},f_{V}\,:\,\mathbb{P}(\mathbb{C}^{n})\rightarrow\mathbb{R}$ verifying 
	$f\vert_{U}=(f_{U}\circ \tau)\vert_{U}$ and $f\vert_{V}=(f_{V}\circ \tau)\vert_{V}\,.$
	Since $f_{U}\circ \tau$ and $f_{U}\circ \tau$ coincide on the intersection of $U$ and $V\,,$
	there exists a connected open subset of $\mathbb{P}(\mathbb{C}^{n})$ on 
	which $f_{U}$ and $f_{V}$ coincide.
	But now, since K\"{a}hler functions on 
	$\mathbb{P}(\mathbb{C}^{n})$ are of the form $f^{A}$ 
	(see Lemma \ref{lemma comomentum map}), it is clear from \eqref{eee cafe encore} 
	that if two K\"{a}hler functions 
	on the complex projective space coincide on an open subset, then they are equal. This 
	implies $f_{U}=f_{V}$ from which the lemma follows.
\end{proof}
	Since every deck transformation of $T\mathcal{P}_{n}^{\times}$ 
	is a holomorphic isometry, it follows from 
	Lemma \ref{lemma la decomposition qu'il faut entre blabla...} that the deck transformation group
	of the universal covering map 
	$\tau\,:\,T\mathcal{P}_{n}^{\times}\rightarrow \mathbb{P}(\mathbb{C}^{n})^{\times}$ 
	is a subgroup of $\Gamma(\mathcal{P}_{n}^{\times})\,.$ The converse is given by the 
	following lemma. 
\begin{lemma}\label{lemma necessairement!!!}
	An element $\gamma\in \Gamma(\mathcal{P}_{n}^{\times})$ is necessarily a deck transformation. 
\end{lemma}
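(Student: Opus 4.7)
The plan is to combine the two preceding lemmas (\ref{lemma comment le montrer?} and \ref{lemma la decomposition qu'il faut entre blabla...}) to force the equality $\tau \circ \gamma = \tau$ pointwise. So let $\gamma \in \Gamma(\mathcal{P}_{n}^{\times})$ be given, fix an arbitrary point $x \in T\mathcal{P}_{n}^{\times}$, and aim to show that $\tau(\gamma(x)) = \tau(x)$ in $\mathbb{P}(\mathbb{C}^{n})^{\times} \subseteq \mathbb{P}(\mathbb{C}^{n})$.

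First I would observe that by Lemma \ref{lemma la decomposition qu'il faut entre blabla...}, for every K\"{a}hler function $\overline{f} : \mathbb{P}(\mathbb{C}^{n}) \to \mathbb{R}$ the composition $\overline{f}\circ\tau$ is a K\"{a}hler function on $T\mathcal{P}_{n}^{\times}$, and conversely every element of $\mathscr{K}(T\mathcal{P}_{n}^{\times})$ is of this form. By the very definition of $\Gamma(\mathcal{P}_{n}^{\times})$ given in \eqref{eee definition gamma}, every K\"{a}hler function on $T\mathcal{P}_{n}^{\times}$ is $\gamma$-invariant. Applying this to $\overline{f}\circ\tau$ therefore yields
\begin{eqnarray*}
    \overline{f}\big(\tau(\gamma(x))\big) = \overline{f}\big(\tau(x)\big)
\end{eqnarray*}
for every K\"{a}hler function $\overline{f}$ on $\mathbb{P}(\mathbb{C}^{n})$, and this for every fixed $x$.

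Next I would invoke Lemma \ref{lemma comment le montrer?}, which characterizes equality of two points of $\mathbb{P}(\mathbb{C}^{n})$ by the agreement of all K\"{a}hler functions on them. Applied to the points $\tau(\gamma(x))$ and $\tau(x)$ of $\mathbb{P}(\mathbb{C}^{n})^{\times} \subseteq \mathbb{P}(\mathbb{C}^{n})$, the lemma immediately gives $\tau(\gamma(x)) = \tau(x)$. Since $x$ was arbitrary, $\tau \circ \gamma = \tau$, so $\gamma$ preserves the fibers of the covering map $\tau$ and is therefore a deck transformation.

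The conceptual work was all done beforehand, so no step looks genuinely hard. The one point requiring some care is making sure that in applying Lemma \ref{lemma la decomposition qu'il faut entre blabla...}, the K\"{a}hler functions $\overline{f}$ produced really do range over all of $\mathscr{K}(\mathbb{P}(\mathbb{C}^{n}))$ (not some proper subfamily), so that Lemma \ref{lemma comment le montrer?} can be applied with its full strength; but this is exactly the content of the decomposition $f = \overline{f}\circ\tau$ as $f$ varies. Hence the proof reduces to a direct chaining of the two lemmas.
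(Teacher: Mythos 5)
Your proposal is correct and follows exactly the paper's own argument: the paper likewise chains Lemma \ref{lemma la decomposition qu'il faut entre blabla...} (to see that $\overline{f}\circ\tau\circ\gamma=\overline{f}\circ\tau$ for every $\overline{f}\in\mathscr{K}(\mathbb{P}(\mathbb{C}^{n}))$, using the definition of $\Gamma(\mathcal{P}_{n}^{\times})$) with Lemma \ref{lemma comment le montrer?} to conclude $\tau\circ\gamma=\tau$. Your write-up is simply a more explicit, pointwise version of the same two-lemma argument.
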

\begin{proof}
	Let $\gamma$ be an element of $\Gamma(\mathcal{P}_{n}^{\times})\,.$ By definition of 
	$\Gamma(\mathcal{P}_{n}^{\times})\,,$ and taking into account Lemma \ref{lemma comment le montrer?} 
	and Lemma \ref{lemma la decomposition qu'il faut entre blabla...}, we see that  
	\begin{eqnarray}
		 \Big(f\circ\tau\circ\gamma=f\circ \tau\,\,\,\,\,\,\,\forall\,\,f\in 
			\mathscr{K}\big(\mathbb{P}(\mathbb{C}^{n})\big)\Big)\,\,\,\,\,
		\Rightarrow\,\,\,\,\, \tau\circ \gamma =\tau\,,
	\end{eqnarray}
	i.e., $\gamma$ is a deck transformation. The lemma follows. 
\end{proof}
	Equation \eqref{equation defintion de pipipi} being straightforward, 
	Proposition \ref{proposition le bon group indeed} is now a direct consequence of Lemma 
	\ref{lemma la decomposition qu'il faut entre blabla...} and Lemma \ref{lemma necessairement!!!}.\\

	Let us end this section with a few important remarks on the map 
	$\pi_{\mathcal{P}_{n}^{\times}}\,:\,\mathbb{P}(\mathbb{C}^{n})^{\times}
	\rightarrow \mathcal{P}_{n}^{\times}\,.$ 
	Clearly, $\pi_{\mathcal{P}_{n}^{\times}}$ extends uniquely as a continuous map 
	$\pi_{\mathcal{P}_{n}}\,:\,\mathbb{P}(\mathbb{C}^{n})\rightarrow \mathcal{P}_{n}\,,$
	where\footnote{We endow $\mathcal{P}_{n}$ with the topology induced by $\mathbb{R}^{n}$ via 
	the injection $\mathcal{P}_{n}\hookrightarrow \mathbb{R}^{n}\,,\,\,p\mapsto \big(p(x_{1}),...,p(x_{n})\big)\,.$}
	\begin{eqnarray}
		\mathcal{P}_{n}:=\Big\{p\,:\,\Omega\rightarrow \mathbb{R}\,\,\Big\vert\,\,
		p(x_{i})\geq 0\,\,\,\textup{for all}\,\,\,x_{i}\in \Omega\,\,\,\textup{and}\,\,\,
		\sum_{i=1}^{n}p(x_{i})=1\Big\}\,. 
	\end{eqnarray}
	Notice that $\mathcal{P}_{n}^{\times}\subseteq \mathcal{P}_{n}\,,$ and that 
	these two spaces are distinguished only by the conditions $p>0$ for $\mathcal{P}_{n}^{\times}$ 
	and $p\geq 0$ for $\mathcal{P}_{n}\,.$ Notice also that $\mathcal{P}_{n}$
	is not a smooth manifold, for it has a boundary and corners.

	The map $\pi_{\mathcal{P}_{n}}\,:\,\mathbb{P}(\mathbb{C}^{n})\rightarrow \mathcal{P}_{n}$ has the property 
	that it makes the following diagram commutative
	\begin{eqnarray}
		\xymatrix{
   			\mathbb{P}(\mathbb{C}^{n})^{\times} \ar[r]^{\pi_{\mathcal{P}_{n}^{\times}}} \ar[d]_i & 
				\mathcal{P}_{n}^{\times} \ar[d]^j \\
   			\mathbb{P}(\mathbb{C}^{n}) \ar[r]_{{\pi_{\mathcal{P}_{n}}}} & \mathcal{P}_{n}
  				}	
	\end{eqnarray}
	($i,j$ are inclusions), which allows to carry over many structural properties of the 
	space of K\"{a}hler functions on 
	$\mathbb{P}(\mathbb{C}^{n})^{\times}$ to the space of K\"{a}hler functions on 
	$\mathbb{P}(\mathbb{C}^{n})\,.$ Indeed, let $\mathscr{K}(\mathcal{P}_{n})$ denotes the following 
	space of functions
	\begin{eqnarray}
		\Big\{f\,:\,\mathcal{P}_{n}\rightarrow\mathbb{R}\,\big\vert\,f(p)=
		\sum_{k=1}^{n}X_{k}p(x_{k})\,,\,X=(X_{1},...,X_{n})\in\mathbb{R}^{n}\Big\}\,.
	\end{eqnarray}
	By using Corollary \ref{ccc les fonctions de la formes...}, 
	Lemma \ref{lemma la decomposition qu'il faut entre blabla...}, some obvious continuity 
	arguments and the fact that
	$\mathcal{A}_{\mathcal{P}_{n}^{\times}}=\{X\,:\,\Omega\rightarrow 
	\mathbb{R}\}\cong\mathbb{R}^{n}$ (see the general definition of $\mathcal{A}_{\mathcal{E}}$ given 
	in \eqref{eee definition les observables...} and Example \ref{example finite omega}),
	one easily shows the following: 
\begin{proposition}\label{lll nouvel methodologie!!!}
	We have:
	\begin{description}
		\item[$(i)$] $\mathscr{K}\big(\mathbb{P}(\mathbb{C}^{n})^{\times}\big)\cong
			\mathscr{K}\big(\mathbb{P}(\mathbb{C}^{n})^{}\big)\,,$\,\,
			(Lie algebra isomorphism),
		\item[$(ii)$] $\mathscr{K}(\mathcal{P}_{n}^{\times})\cong 
			\mathscr{K}(\mathcal{P}_{n})\,,$
		\item[$(iii)$] functions on $\mathbb{P}(\mathbb{C}^{n})$ of the form 
			$f\circ \pi_{\mathcal{P}_{n}}\circ \Phi\,,$ where 
			$\Phi\,:\,\mathbb{P}(\mathbb{C}^{n})\rightarrow\mathbb{P}(\mathbb{C}^{n})$ is a holomorphic isometry 
			and where $f\in \mathscr{K}(\mathcal{P}_{n})\,,$ are K\"{a}hler functions.  
	\end{description}
\end{proposition}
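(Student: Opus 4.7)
The plan is to treat (ii) first, use it together with the covering $\tau$ to establish (i), and then combine both for (iii).

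For (ii), I apply Proposition \ref{proposition c'est chouette nom} directly to the exponential family $\mathcal{P}_{n}^{\times}\,.$ By Example \ref{example finite omega} the generators $F_{i}$ of $\mathcal{A}_{\mathcal{P}_{n}^{\times}}$ satisfy $F_{i}(x_{j})=\delta_{ij}\,,$ so together with the constant function $1$ they span the full $n$-dimensional space of real-valued functions on $\Omega\,;$ that is, $\mathcal{A}_{\mathcal{P}_{n}^{\times}}\cong \mathbb{R}^{n}\,.$ Proposition \ref{proposition c'est chouette nom} then identifies $\mathscr{K}(\mathcal{P}_{n}^{\times})$ with the space of functions $p\mapsto \sum_{k=1}^{n}X_{k}\,p(x_{k})\,.$ These are linear in $p(x_{1}),\ldots,p(x_{n})\,,$ so they extend by the same formula to continuous functions on $\mathcal{P}_{n}\,,$ yielding exactly the space $\mathscr{K}(\mathcal{P}_{n})\,;$ restriction inverts this extension.

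For (i), restriction $\rho : \mathscr{K}(\mathbb{P}(\mathbb{C}^{n}))\to \mathscr{K}(\mathbb{P}(\mathbb{C}^{n})^{\times})$ is a Lie algebra homomorphism. Injectivity uses Lemma \ref{lemma comomentum map}: K\"{a}hler functions on $\mathbb{P}(\mathbb{C}^{n})$ have the form $\xi^{A}\,,$ which is rational (hence real-analytic) in homogeneous coordinates, so two such functions that agree on the open dense subset $\mathbb{P}(\mathbb{C}^{n})^{\times}$ must coincide. For surjectivity, take $g\in \mathscr{K}(\mathbb{P}(\mathbb{C}^{n})^{\times})\,.$ By Proposition \ref{proposition la clé du bordel} the covering $\tau$ is a local holomorphic isometry up to the global constant $1/4\,,$ and a uniform scaling of $(g,\omega)$ leaves the symplectic gradient $X_{f}$ and hence the Killing condition unchanged; therefore $g\circ \tau$ is a K\"{a}hler function on $T\mathcal{P}_{n}^{\times}\,.$ Lemma \ref{lemma la decomposition qu'il faut entre blabla...} produces a K\"{a}hler function $\bar{g}$ on $\mathbb{P}(\mathbb{C}^{n})$ with $g\circ\tau=\bar{g}\circ\tau\,,$ and surjectivity of $\tau$ onto $\mathbb{P}(\mathbb{C}^{n})^{\times}$ yields $\rho(\bar{g})=g\,.$

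For (iii), let $f\in \mathscr{K}(\mathcal{P}_{n})$ and let $\Phi$ be a holomorphic isometry of $\mathbb{P}(\mathbb{C}^{n})\,.$ By (ii), $f|_{\mathcal{P}_{n}^{\times}}\in \mathscr{K}(\mathcal{P}_{n}^{\times})\,.$ Applying Corollary \ref{ccc les fonctions de la formes...} with the trivial isometry on $(\mathcal{P}_{n}^{\times})^{\mathbb{C}}\cong \mathbb{P}(\mathbb{C}^{n})^{\times}$ shows that $f|_{\mathcal{P}_{n}^{\times}}\circ \pi_{\mathcal{P}_{n}^{\times}}$ is a K\"{a}hler function on $\mathbb{P}(\mathbb{C}^{n})^{\times}\,.$ By (i) it extends uniquely to a K\"{a}hler function on $\mathbb{P}(\mathbb{C}^{n})\,,$ and the commutative diagram together with the explicit formula \eqref{equation defintion de pipipi} and the continuity of $f$ force this extension to equal $f\circ \pi_{\mathcal{P}_{n}}\,.$ Finally, pullback of a K\"{a}hler function by a holomorphic isometry is again K\"{a}hler: its Hamiltonian vector field pulls back to a Killing vector field. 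Hence $f\circ \pi_{\mathcal{P}_{n}}\circ \Phi \in \mathscr{K}(\mathbb{P}(\mathbb{C}^{n}))\,.$ The main obstacle throughout is the surjectivity step in (i), which hinges on the rigidity of K\"{a}hler functions on $\mathbb{P}(\mathbb{C}^{n})\,;$ their parametrization by $\mathfrak{u}(n)$ via Lemma \ref{lemma comomentum map} means they are determined by their values on any nonempty open set, so the local descent along $\tau$ assembles into a single coherent global extension rather than a patchwork that could fail to glue.
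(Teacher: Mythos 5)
Your proof is correct and fills in precisely the ingredients the paper itself cites for this result (Corollary \ref{ccc les fonctions de la formes...}, Lemma \ref{lemma la decomposition qu'il faut entre blabla...}, continuity arguments, and $\mathcal{A}_{\mathcal{P}_{n}^{\times}}\cong\mathbb{R}^{n}$), so the approach is essentially the same as the paper's. One small imprecision: under the uniform rescaling $g\mapsto c\,g\,,$ $\omega\mapsto c\,\omega$ the symplectic gradient is multiplied by $1/c$ rather than left unchanged, but since a constant multiple of a Killing field is still Killing, your conclusion that the K\"{a}hler-function condition is scale-invariant stands.
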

	We see from Proposition \ref{lll nouvel methodologie!!!} that the map 
	$\pi_{\mathcal{P}_{n}}\,:\,\mathbb{P}(\mathbb{C}^{n})\rightarrow \mathcal{P}_{n}$
	behaves like the canonical projection of a K\"{a}hlerification. It is thus a natural ``completion" of the 
	map $\pi_{\mathcal{P}_{n}^{\times}}\,:\,\mathbb{P}(\mathbb{C})^{\times}\rightarrow \mathcal{P}_{n}^{\times}\,,$ 
	and \textit{formally} we have $(\mathcal{P}_{n})^{\mathbb{C}}=\mathbb{P}(\mathbb{C}^{n})\,.$\\

\section{K\"{a}hlerification and the geometrical formulation of quantum mechanics}\label{section quantum mechanics}

	The goal of this section is to rederive the geometrical formulation of quantum mechanics in finite dimension 
	(based on the K\"{a}hler properties of $\mathbb{P}(\mathbb{C}^{n})$ as in \cite{Ashtekar}), using a statistically oriented 
	approach through the equation $(\mathcal{P}_{n}^{\times})^{\mathbb{C}}=\mathbb{P}(\mathbb{C}^{n})^{\times}$ 
	and its formal version $(\mathcal{P}_{n})^{\mathbb{C}}=\mathbb{P}(\mathbb{C}^{n})\,.$

	This exercise is necessary, for we want to express all the relevant quantities of the geometrical formulation 
	in terms of statistical concepts, aiming to generalize them to situations where $\mathcal{P}_{n}^{\times}$ is replaced by 
	a more general exponential family $\mathcal{E}\,.$


	\text{}\\
	Let us start with the following ``statistical" characterization of K\"{a}hler functions on the complex 
	projective space (see also Corollary \ref{ccc les fonctions de la formes...}).
\begin{proposition}[]
	\label{ppp decomposition spectrale geometrique statistique}
	Let $f\,:\,\mathbb{P}(\mathbb{C}^{n})\rightarrow\mathbb{R}$ be a smooth function. Then, $f$ is 
	a K\"{a}hler function if and only if there exist a random variable $X\,:\,\Omega=\{x_{1},...,x_{n}\}\rightarrow \mathbb{R}$ 
	and an unitary matrix $U\in U(n)$ such that 
	\begin{eqnarray}\label{eee pouete il fait trop chaud j'en ai marre}
		f([z])=\int_{\Omega}\,X(x)\,\big[(\pi_{\mathcal{P}_{n}}\circ \Phi_{U})([z])\big](x)dx\,,
	\end{eqnarray}
	where $\pi_{\mathcal{P}_{n}}\,:\,\mathbb{P}(\mathbb{C}^{n})\rightarrow\mathcal{P}_{n}$ is the map 
	considered at the end of \S\ref{section khalerif jskjfdf}, and where $\Phi_{U}$ is the 
	holomorphic isometry of $\mathbb{P}(\mathbb{C}^{n})$ defined by $\Phi_{U}([z])=\big[U\cdot z\big]\,.$
\end{proposition}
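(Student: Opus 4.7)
The plan is to reduce everything to the spectral theorem for Hermitian matrices via the Lie algebra isomorphism $\mathfrak{u}(n) \cong \mathscr{K}(\mathbb{P}(\mathbb{C}^n))$ of Lemma \ref{lemma comomentum map}, combined with the explicit formula \eqref{equation defintion de pipipi} for $\pi_{\mathcal{P}_n}\,.$ The first thing I would record is that, because $U\in U(n)$ preserves the Hermitian norm, the map appearing inside the integral is given explicitly by
\begin{eqnarray}
(\pi_{\mathcal{P}_{n}}\circ \Phi_{U})([z])(x_{k})=\frac{|(Uz)_{k}|^{2}}{\langle z,z\rangle}\,,
\end{eqnarray}
so that both sides of the desired equivalence become rational functions of $z$ that are quadratic in $z$ and $\overline{z}\,.$

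For the ``$\Leftarrow$'' direction, given $X\,:\,\Omega\rightarrow\mathbb{R}$ with $X(x_{k})=X_{k}$ and $U\in U(n)\,,$ I would let $D_{X}:=\textup{diag}(X_{1},\ldots,X_{n})$ and compute
\begin{eqnarray}
\int_{\Omega}X(x)\big[(\pi_{\mathcal{P}_{n}}\circ \Phi_{U})([z])\big](x)dx=\sum_{k}X_{k}\frac{|(Uz)_{k}|^{2}}{\langle z,z\rangle}=\frac{\langle z,U^{*}D_{X}Uz\rangle}{\langle z,z\rangle}\,.
\end{eqnarray}
Since $U^{*}D_{X}U$ is Hermitian, the matrix $A:=-2i\,U^{*}D_{X}U$ lies in $\mathfrak{u}(n)\,,$ and comparison with \eqref{eee cafe encore} shows that the displayed function equals $\xi^{A}([z])\,.$ By Lemma \ref{lemma comomentum map} this is a K\"{a}hler function, yielding one implication.

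For the ``$\Rightarrow$'' direction, let $f$ be a K\"{a}hler function. By Lemma \ref{lemma comomentum map} there is a unique $A\in \mathfrak{u}(n)$ with $f=\xi^{A}\,.$ Then $iA$ is Hermitian, so the spectral theorem furnishes $U\in U(n)$ and real numbers $\mu_{1},\ldots,\mu_{n}$ with $iA=U^{*}DU\,,$ $D:=\textup{diag}(\mu_{1},\ldots,\mu_{n})\,,$ i.e.\ $A=-iU^{*}DU\,.$ Plugging this into \eqref{eee cafe encore} gives
\begin{eqnarray}
\xi^{A}([z])=\frac{i}{2}\frac{\langle z,-iU^{*}DUz\rangle}{\langle z,z\rangle}=\frac{1}{2}\sum_{k}\mu_{k}\frac{|(Uz)_{k}|^{2}}{\langle z,z\rangle}\,,
\end{eqnarray}
and defining the random variable $X(x_{k}):=\mu_{k}/2$ one recognises the right-hand side as $\int_{\Omega}X(x)[(\pi_{\mathcal{P}_{n}}\circ \Phi_{U})([z])](x)dx\,,$ as required.

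There is no genuine obstacle here: the proposition is essentially a reformulation of the diagonalisability of Hermitian matrices, dressed up in the statistical language of $\pi_{\mathcal{P}_{n}}\,.$ The only care to take is bookkeeping of the factors $i$ and $1/2$ built into the normalisation of $\xi^{A}$ in Lemma \ref{lemma comomentum map}, and making sure that \emph{every} K\"{a}hler function (including constants, which arise from $A\in i\mathbb{R}\cdot\textup{Id}$) is captured; the latter is automatic since the spectral decomposition above applies to every $A\in\mathfrak{u}(n)\,.$
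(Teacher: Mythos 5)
Your proof is correct and follows essentially the same route as the paper, which simply invokes Lemma \ref{lemma comomentum map} together with the usual spectral decomposition theorem; you have merely written out the bookkeeping (the factors $i$ and $1/2$ and the identity $(\pi_{\mathcal{P}_{n}}\circ\Phi_{U})([z])(x_{k})=|(Uz)_{k}|^{2}/\langle z,z\rangle$) explicitly, and those computations check out.
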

\begin{proof}

	Simply use Lemma \ref{lemma comomentum map} together with the usual spectral decomposition 
	theorem. 
\end{proof}
	Proposition \ref{ppp decomposition spectrale geometrique statistique} implies that 
	every K\"{a}hler function on $\mathbb{P}(\mathbb{C}^{n})$ can be realized as an 
	expectation of the form $f([z])=E_{(\pi\circ \Phi_{U})([z])}(X)\,,$ where 
	$X\,:\,\Omega\rightarrow\mathbb{R}$ is a random variable. The image $\textup{Im}(X)$ of $X$
	has thus an important statistical meaning that we would like to relate with the usual 
	\textit{spectrum} of $f$ as defined in the geometrical formulation of quantum mechanics. 

	To this end, we could use Lemma \ref{lemma comomentum map}, 
	Proposition \ref{ppp decomposition spectrale geometrique statistique}, 
	and relate $\textup{Im}(X)$ with the usual spectrum of an appropriate Hermitian matrix. 
	We prefer, however, to use a generalization of a statistical result that we now present. 
	
	Recall that if $X\,:\,\Omega\rightarrow\mathbb{R}$ is a random variable, and if 
	$p\in\mathcal{P}_{n}$ is a given probability, then the 
	variance of $X$ with respect to $p$ is given by $V_{p}(X)=E_{p}\big((X-E_{p}(X))^{2}\big)\,.$
\begin{proposition}[Cram\'{e}r-Rao equality]\label{ppp cramer rao}
	Let $f([z])=\int_{\Omega}\,X(x)[(\pi_{\mathcal{P}_{n}}\circ\Phi_{U})([z])](x)dx$ be a K\"{a}hler function on 
	$\mathbb{P}(\mathbb{C}^{n})\,.$ For all $[z]\in \mathbb{P}(\mathbb{C}^{n})\,,$ we have:
	\begin{eqnarray}\label{eee le bar le burau bis!}
		V_{(\pi_{\mathcal{P}_{n}}\circ\Phi_{U})([z])}(X)=\dfrac{1}{4}\|\,\textup{grad}(f)_{[z]}\|^{2}\,,
	\end{eqnarray}
	where $\textup{grad}(f)$ denotes the Riemannian gradient of $f$ with respect to 
	the Fubini-Study metric $g_{FS}\,.$ 
\end{proposition}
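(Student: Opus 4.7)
The plan is to pull everything back from $\mathbb{P}(\mathbb{C}^{n})$ to $T\mathcal{P}_{n}^{\times}$ via the covering $\tau$ of Proposition \ref{proposition la clé du bordel}, and from there to the base $\mathcal{P}_{n}^{\times}$ via $\pi_{\mathcal{P}_{n}^{\times}}$, in order to reduce the identity to a Fisher-metric gradient computation on an exponential family.

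First I would reduce to the case $U=I$. Since $\Phi_{U}$ is a holomorphic isometry of $\mathbb{P}(\mathbb{C}^{n})$, writing $[z'] := \Phi_{U}([z])$ and $\tilde{f}([z']) := \int_{\Omega}X(x)[\pi_{\mathcal{P}_{n}}([z'])](x)\,dx$, one has $f = \tilde{f}\circ\Phi_{U}$; both sides of \eqref{eee le bar le burau bis!} pull back to the corresponding quantities for $\tilde{f}$ at $[z']$, using that $\Phi_{U}^{*}g_{FS}=g_{FS}$. So it suffices to prove the proposition for $f([z])=\sum_{k}X(x_{k})|z_{k}|^{2}/\langle z,z\rangle$.

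Next I would transfer the computation to $T\mathcal{P}_{n}^{\times}$. A direct computation from \eqref{eee definition tau} and \eqref{equation defintion de pipipi} gives the identity
\[
(f\circ\tau)([u]_{p}) \;=\; \sum_{k=1}^{n} X(x_{k})\,p(x_{k}) \;=\; E_{p}(X),
\]
so $f\circ\tau = E_{X}\circ\pi_{\mathcal{P}_{n}^{\times}}$, where $E_{X}\,:\,\mathcal{P}_{n}^{\times}\rightarrow\mathbb{R}$, $p\mapsto E_{p}(X)$. By Proposition \ref{proposition la clé du bordel}(ii), $\tau^{*}g_{FS}=g/4$, so $\tau$ is a local homothety; the standard behavior of gradients under rescaling a metric by a constant gives $\|\textup{grad}^{g_{FS}}(f)\|^{2}_{g_{FS}}\circ\tau = 4\,\|\textup{grad}^{g}(f\circ\tau)\|^{2}_{g}$. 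Since $\pi_{\mathcal{P}_{n}^{\times}}\,:\,(T\mathcal{P}_{n}^{\times},g)\rightarrow(\mathcal{P}_{n}^{\times},h_{F})$ is a Riemannian submersion, the gradient of any pullback $E_{X}\circ\pi_{\mathcal{P}_{n}^{\times}}$ is horizontal and is sent isometrically by $(\pi_{\mathcal{P}_{n}^{\times}})_{*}$ onto $\textup{grad}^{h_{F}}(E_{X})$, so their squared norms agree. Hence
\[
\|\textup{grad}^{g_{FS}}(f)_{[z]}\|^{2} \;=\; 4\,\|\textup{grad}^{h_{F}}(E_{X})_{p}\|^{2}, \qquad p := \pi_{\mathcal{P}_{n}}([z]).
\]

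Finally I would compute $\textup{grad}^{h_{F}}(E_{X})_{p}$ in the exponential representation. By \eqref{equation derivée plus identification expo}, $dE_{X}([v]_{p}) = E_{p}(Xv)$, and searching $\textup{grad}^{h_{F}}(E_{X})_{p}$ under the form $[w]_{p}$ with the tangency constraint $E_{p}(w)=0$, the defining equation $h_{F}([w]_{p},[v]_{p}) = E_{p}(Xv)$ for every admissible $v$ forces $w = X - E_{p}(X)$. By \eqref{eee epression explicite fisher},
\[
\|\textup{grad}^{h_{F}}(E_{X})_{p}\|^{2} \;=\; \sum_{k} p_{k}\bigl(X(x_{k}) - E_{p}(X)\bigr)^{2} \;=\; V_{p}(X),
\]
which combined with the previous display gives \eqref{eee le bar le burau bis!}. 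The main obstacle is not any deep step but rather careful bookkeeping: keeping the factor $1/4$ honest as one moves between the three metrics $g_{FS}$, $g$ and $h_{F}$, and noting that $f\circ\tau$ really depends only on $\pi_{\mathcal{P}_{n}^{\times}}$ despite $\tau$ involving the phases $e^{iu_{k}/2}$, which is what makes the reduction to a Fisher-gradient computation possible in the first place.
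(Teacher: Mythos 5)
Your argument is correct and is essentially the paper's own proof: the same reduction to $U=I_{n}$ via the isometry $\Phi_{U}$, the same use of $\tau^{*}g_{FS}=\frac{1}{4}g$ to convert gradient norms (the paper's \eqref{eee il me gonfle ce type}), and the same final identity $\|\textup{grad}^{h_{F}}(E_{p}(X))\|^{2}=V_{p}(X)$. The only organizational difference is that you compute the gradient downstairs, using that $f\circ\tau$ factors through the Riemannian submersion $\pi_{\mathcal{P}_{n}^{\times}}$ so that its gradient is the horizontal lift of $\textup{grad}^{h_{F}}(E_{X})=[X-E_{p}(X)]_{p}\,,$ whereas the paper computes $\textup{grad}(f\circ\tau)_{[u]_{p}}=\big([u]_{p},[X-E_{p}(X)]_{p},[0]_{p}\big)$ directly in the Dombrowski splitting via Lemma \ref{lemme formule inverse identification}; these are the same computation in different clothing. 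One small omission: your argument establishes \eqref{eee le bar le burau bis!} only at points of $\mathbb{P}(\mathbb{C}^{n})^{\times}=\tau(T\mathcal{P}_{n}^{\times})\,,$ while the statement is for all $[z]\in\mathbb{P}(\mathbb{C}^{n})\,;$ you need the final remark, which the paper makes explicitly, that both sides of \eqref{eee le bar le burau bis!} are continuous on $\mathbb{P}(\mathbb{C}^{n})$ and $\tau(T\mathcal{P}_{n}^{\times})$ is dense, so the identity extends.
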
  
\begin{proof}
	If $f\,:\,\mathbb{P}(\mathbb{C}^{n})\rightarrow\mathbb{R}$ is a (non-necessarily K\"{a}hler) function, 
	and if $\phi\,:\,\mathbb{P}(\mathbb{C}^{n})\rightarrow\mathbb{P}(\mathbb{C}^{n})$ is an isometry, 
	then, for $[z]\in\mathbb{P}(\mathbb{C}^{n})\,,$ we have:
	\begin{eqnarray}
		\textup{grad}(f\circ \phi)_{[z]}=(\phi^{-1})_{*}(\textup{grad}(f))_{\phi([z])}\,.
	\end{eqnarray} 
	From this formula, and the fact that $\Phi_{U}$ is an isometry, 
	we see that it is sufficient to show the proposition for $U=I_{n}\,.$ So let 
	$X\,:\,\Omega\rightarrow\mathbb{R}$ be a random variable and assume that 
	$f([z])=\sum_{k=1}^{n}X(x_{k})\pi_{\mathcal{P}_{n}}([z])(x_{k})\,.$ 

	Taking into account 
	Proposition \ref{proposition la clé du bordel}, we see that the function 
	$\overline{f}:=f\circ\tau\,:\,T\mathcal{P}_{n}^{\times}\rightarrow\mathbb{R}$ satisfies 
	\begin{eqnarray}\label{eee il me gonfle ce type}
		\tau_{*_{[u]_{p}}}\textup{grad}(\overline{f})_{[u_{p}]}=\dfrac{1}{4}\textup{grad}(f)_{\tau([u]_{p})}
		\,\,\,\,\textup{and}\,\,\,\,
		\|\,\textup{grad}(\overline{f})_{[u]_{p}}\|^{2}=\dfrac{1}{4}\|\,\textup{grad}(f)_{\tau([u]_{p})}\|^{2}\,, 
	\end{eqnarray}
	where $\textup{grad}(\overline{f})$ denotes the Riemannian gradient of $\overline{f}$ with respect to the 
	Riemannian metric $g$ (see the definition of $g$ in Proposition \ref{proposition la clé du bordel}). 
	Moreover, for $A_{[u]_{p}}=([u]_{p},[v]_{p},[w]_{p})=d/dt\vert_{0}[u+tw-E_{p(t)}(u+tw)]_{p(t)}$ 
	as in Lemma \ref{lemme formule inverse identification}, and taking into account the relation 
	$(\pi_{\mathcal{P}_{n}}\circ\tau)([u]_{p})=p\,,$ we see that 
	\begin{eqnarray}
		&&g_{[u]_{p}}\big(\textup{grad}(\overline{f})_{[u]_{p}},A_{[u]_{p}}\big)
			=\overline{f}_{*_{[u]_{p}}}A_{[u]_{p}}
			=\dfrac{d}{dt}\bigg\vert_{0}\, \overline{f}\big(\big[u+tw-E_{p(t)}(u+tw)\big]_{p(t)}\big)
			\nonumber\\
		&& =\dfrac{d}{dt}\bigg\vert_{0}\,\sum_{k=1}^{n}\,p(t)(x_{k})X(x_{k})=
			\sum_{k=1}^{n}\,p(x_{k})v_{k}X(x_{k})
			=(h_{F})_{p}\big([v]_{p},[X-E_{p}(X)]_{p}\big)+(h_{F})_{p}\big([w]_{p},0\big)\nonumber\\
		&& = g_{[u]_{p}}\Big(\big([u]_{p},[X-E_{p}(X)]_{p},[0]_{p}\big),
			\big([u]_{p},[v]_{p},[w]_{p}\big)\Big)\,,
	\end{eqnarray}
	and thus, 
	\begin{eqnarray}
		\textup{grad}(\overline{f})_{[u]_{p}}=\big([u]_{p},[X-E_{p}(X)]_{p},[0]_{p}\big)\,.
	\end{eqnarray}
	From this equation, and taking into account \eqref{eee epression explicite fisher}, we get
	\begin{eqnarray}
		&&\|\textup{grad}(\overline{f})_{[u]_{p}}\|^{2}=E_{p}\big((X-E_{p}(X))^{2}\big)=V_{p}(X)\,.
	\end{eqnarray}
	The proposition is now a consequence of this last equation together 
	with \eqref{eee il me gonfle ce type} and the fact that 
	$\tau\big(T\mathcal{P}_{n}^{\times}\big)$ is dense in $\mathbb{P}(\mathbb{C}^{n})\,.$
\end{proof}
\begin{remark} 
	Proposition \ref{ppp cramer rao} is a direct generalization of a formula 
	which is well known in the context of information geometry, 
	namely\footnote{The absence of the factor 
	$1/4$ in \eqref{eee nar le bureau} compared to \eqref{eee le bar le burau bis!} is due to the normalizing factor 
	of the Fubini-Study metric $g_{FS}$ used throughout this section (see also Proposition 
	\ref{proposition la clé du bordel}).}
		\begin{eqnarray}\label{eee nar le bureau}
		\|\,\textup{grad}\big(E_{p}(X)\big)\|^{2}=V_{p}(X)\,,
	\end{eqnarray}
	where $E_{p}(X)$ denotes the function $\mathcal{P}_{n}^{\times}\rightarrow\mathbb{R}\,,
	p\mapsto E_{p}(X)$ ($X\,:\,\Omega\rightarrow\mathbb{R}$ being 
	a given random variable), and where the norm and the Riemannian gradient are taken 
	with respect to the Fisher metric $h_{F}\,.$
	The above formula is sometimes called Cram\'{e}r-Rao equality for, it allows 
	to recover the usual Cram\'{e}r-Rao inequality, the latter being, roughly, an inequality 
	which gives a ``lower bound" for the variance-covariance matrix of an unbiased 
	estimator on a given statistical model $S$ (see \cite{Amari-Nagaoka} for details). 
\end{remark}
\begin{remark}
	More generally, if $\mathcal{E}$ is an exponential family whose elements are of the form 
	$p(x;\theta)=\textup{exp}\big\{C(x)+\sum_{i=1}^{n}\,\theta_{i}F_{i}(x)-\psi(\theta)\big\}$ on a fixed measured space 
	$(\Omega,dx)$ (see Definition \ref{definition exp}), 
	and if $X\,:\,\Omega\rightarrow \mathbb{R}$ is a linear combinaition of $1,F_{1},...,F_{n}\,,$ then 
	one can easily show the following identity 
	\begin{eqnarray}
		\|\,\textup{grad}\big(E_{p}(X)\big)\|^{2}=V_{p}(X)\,,
	\end{eqnarray}
	where $E_{p}(X)$ is viewed as the function $\mathcal{E}\rightarrow \mathbb{R}\,,\,\,p\mapsto E_{p}(X)$ and where 
	the norm and the gradient are taken with respect to the Fisher metric $h_{F}\,.$
\end{remark}
	By inspection of the Cram\'{e}r-Rao equality (as formulated in Proposition \ref{ppp cramer rao}), 
	one deduces easily the following corollary:
\begin{corollary}
	Let $f([z])=\int_{\Omega}\,X(x)[(\pi_{\mathcal{P}_{n}}\circ\Phi_{U})([z])](x)dx$ be a K\"{a}hler function on 
	$\mathbb{P}(\mathbb{C}^{n})\,.$ Then a real number $\lambda$ belongs to 
	$\textup{Im}(X)$ if and only if $\lambda$ is a critical value of $f\,,$ i.e. if and only if 
	\begin{eqnarray}
		\exists\, [z]\in \mathbb{P}(\mathbb{C}^{n})\,\,
		\textup{such that}\,\,f_{*_{[z]}}=0\,\,\textup{and}\,\,f([z])=\lambda\,.
	\end{eqnarray}
\end{corollary}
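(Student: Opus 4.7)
The plan is to use the Cramér--Rao equality (Proposition \ref{ppp cramer rao}) to translate the critical-point condition $f_{*_{[z]}}=0$ into a statement about the variance $V_p(X)$ vanishing, and then exploit the elementary fact that in the discrete setting $V_p(X)=0$ forces $X$ to be constant on the support of $p\,.$

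First I would set $p:=(\pi_{\mathcal{P}_{n}}\circ\Phi_{U})([z])\in \mathcal{P}_{n}$ and observe that the three conditions
\begin{eqnarray*}
f_{*_{[z]}}=0,\quad \textup{grad}(f)_{[z]}=0,\quad V_{p}(X)=0
\end{eqnarray*}
are mutually equivalent: the first two agree because $g_{FS}$ is non-degenerate, and the equivalence with the third is exactly the content of Proposition \ref{ppp cramer rao}. Next I would note that because $\Omega$ is finite and $p$ is a probability density, $V_{p}(X)=0$ is equivalent to the existence of a non-empty subset $S\subseteq \Omega$ with $\sum_{x\in S}p(x)=1$ on which $X$ is constant, with common value necessarily equal to $E_{p}(X)=f([z])\,.$

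For the ``$\Leftarrow$'' direction, assume $\lambda$ is a critical value, so there exists $[z]$ with $f_{*_{[z]}}=0$ and $f([z])=\lambda\,.$ By the equivalence above, $X$ is constant on the non-empty support of $p$ with value $E_{p}(X)=\lambda\,,$ hence $\lambda=X(x_{k})$ for some $k\,,$ i.e. $\lambda\in \textup{Im}(X)\,.$ For the ``$\Rightarrow$'' direction, given $\lambda=X(x_{k})$ for some $k\,,$ I would construct an explicit critical point: using the formula $\pi_{\mathcal{P}_{n}}([w])(x_{j})=|w_{j}|^{2}/\langle w,w\rangle$ from \eqref{equation defintion de pipipi}, take $[w]:=[e_{k}]$ so that $\pi_{\mathcal{P}_{n}}([e_{k}])=\delta_{x_{k}}\,,$ and set $[z]:=\Phi_{U}^{-1}([e_{k}])\,.$ Then $p=\delta_{x_{k}}\,,$ so $V_{p}(X)=0\,,$ which by Cramér--Rao gives $f_{*_{[z]}}=0\,,$ while $f([z])=E_{\delta_{x_{k}}}(X)=X(x_{k})=\lambda\,.$

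No serious obstacle is expected: the only subtle point is to verify that $\pi_{\mathcal{P}_{n}}\circ\Phi_{U}$ actually reaches the Dirac masses $\delta_{x_{k}}$ (needed for the backward direction), which follows directly from the explicit formula \eqref{equation defintion de pipipi} and the bijectivity of $\Phi_{U}$ on $\mathbb{P}(\mathbb{C}^{n})\,.$ Everything else is a translation of Proposition \ref{ppp cramer rao} together with the trivial variance-zero characterization.
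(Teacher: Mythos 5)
Your proof is correct and follows exactly the route the paper intends: the paper itself derives this corollary ``by inspection of the Cram\'{e}r--Rao equality,'' i.e.\ by translating $f_{*_{[z]}}=0$ into $V_{p}(X)=0$ with $p=(\pi_{\mathcal{P}_{n}}\circ\Phi_{U})([z])$ and using the discrete variance-zero characterization, which is precisely what you do. Your explicit construction $[z]=\Phi_{U}^{-1}([e_{k}])$ for the converse direction is the natural way to fill in the detail the paper leaves implicit.
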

	The above corollary implies that the set $\textup{Im}(X)$ doesn't depend on the particular 
	decomposition of $f$ given in Proposition \ref{ppp decomposition spectrale geometrique statistique}. 
	We can thus give the following definition: 
\begin{definition}
	The \textit{spectrum} of a K\"{a}hler function 
	$f([z])=\int_{\Omega}\,X(x)[(\pi_{\mathcal{P}_{n}}\circ\Phi_{U})([z])](x)dx$
	on $\mathbb{P}(\mathbb{C}^{n})$ is the subset of $\mathbb{R}$ given by 
	\begin{eqnarray}
		\textup{spec}(f):=\textup{Im}(X)\,.
	\end{eqnarray}
	This is the set of all critical values of $f\,.$
\end{definition}
	Following \cite{Ashtekar}, we shall call elements of $\textup{spec}(f)$ 
	\textit{eigenvalues} and the corresponding critical points \textit{eigenpoints}. These are the geometrical 
	analogues, in the geometrical formulation, of the usual eigenvalues and eigenvectors of Hermitian matrices used in the 
	standard formulation of quantum mechanics. \\

	Given a K\"{a}hler function $f([z])=\int_{\Omega}\,X(x)[(\pi_{\mathcal{P}_{n}}\circ\Phi_{U})([z])](x)dx$
	and a point $[z]\in \mathbb{P}(\mathbb{C}^{n})\,,$
	there is an obvious associated probability $P_{f,[z]}$ on $\textup{spec}(f)\,:$
	\begin{eqnarray}\label{eee definiton de la measure associee}
		P_{f,[z]}(A):=\int_{X^{-1}(A)}\,\big[(\pi_{\mathcal{P}_{n}}\circ \Phi_{U})([z])\big](x)dx\,,
	\end{eqnarray}
	where $A\subseteq\textup{spec}(f)$ is a given subset. This is the pushforward of 
	$\big[(\pi_{\mathcal{P}_{n}}\circ \Phi_{U})([z])\big](x)dx$ via the random variable $X\,:\,\Omega\rightarrow \mathbb{R}\,.$

	As for $\textup{spec}(f)\,,$ we would like to show that this probability doesn't depend 
	on the particular decomposition of $f$ given in 
	\eqref{ppp decomposition spectrale geometrique statistique}. To this end, 
	we introduce, for a given real number $\lambda\,,$ the following space:
	\begin{eqnarray}
		M_{f,\lambda}:=\big\{[z]\in \mathbb{P}(\mathbb{C}^{n})\,\big\vert\,f_{*_{[z]}}=0\,\,\,
	\textup{and}\,\,\,f([z])=\lambda\big\}\,.
	\end{eqnarray}
	Observe that if $\lambda\notin \textup{spec}(f)\,,$ then $M_{f,\lambda}=\emptyset\,.$
\begin{lemma}\label{lll description eigenmanifold}
	Let $\lambda\in \textup{spec}(f)$ be an eigenvalue of a given K\"{a}hler function 
	$f([z])=\int_{\Omega}\,X(x)[(\pi_{\mathcal{P}_{n}}\circ\Phi_{U})([z])](x)dx\,,$ and let us fix, for 
	notational convenience, some indices 
	$k_{1},...,k_{m(\lambda)}\in\{1,...,n\}$ so that we can write 
	$X^{-1}(\lambda)=\{x_{k_{1}},...,x_{k_{m(\lambda)}}\}\,.$
	Let us also denote by $\{e_{1},...,e_{n}\}$ the canonical basis for $\mathbb{C}^{n}\,.$
	Then,
	\begin{eqnarray}
		M_{f,\lambda}=\Big\{U^{*}\cdot \big[c_{1}\cdot e_{k_{1}}+...
		+c_{m(\lambda)}\cdot e_{k_{m(\lambda)}}\big]\in \mathbb{P}(\mathbb{C}^{n})\,
		\big|\,c_{1},...,c_{{m(\lambda)}}\in \mathbb{C}
		\Big\}\,.
	\end{eqnarray}
	In particular, $M_{f,\lambda}\cong \mathbb{P}\big(\mathbb{C}^{m(\lambda)}\big)\,.$
\end{lemma}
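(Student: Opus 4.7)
The approach I will take is to combine the Cram\'{e}r-Rao equality of Proposition~\ref{ppp cramer rao} with the explicit formula for $\pi_{\mathcal{P}_{n}}$ given in \eqref{equation defintion de pipipi}. By Cram\'{e}r-Rao, a point $[z]\in \mathbb{P}(\mathbb{C}^{n})$ satisfies $f_{*_{[z]}}=0$ (equivalently $\textup{grad}(f)_{[z]}=0$) if and only if the variance $V_{p}(X)$ vanishes, where $p:=(\pi_{\mathcal{P}_{n}}\circ \Phi_{U})([z])$. Since $\Omega$ is finite, the vanishing of this discrete variance is equivalent to $X$ being constant on the support of $p$. The eigenvalue condition $f([z])=\lambda$, which by definition reads $E_{p}(X)=\lambda$, then forces this constant to be exactly $\lambda$; hence $\textup{supp}(p)\subseteq X^{-1}(\lambda)$.

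First I would compute $\textup{supp}(p)$ explicitly. Combining \eqref{equation defintion de pipipi} with the definition $\Phi_{U}([z])=[U\cdot z]$ gives $p(x_{k})=|(Uz)_{k}|^{2}/\|z\|^{2}$, so $\textup{supp}(p)=\{x_{k}\,:\,(Uz)_{k}\neq 0\}$. The inclusion $\textup{supp}(p)\subseteq \{x_{k_{1}},\dots,x_{k_{m(\lambda)}}\}$ translates into the condition $(Uz)_{k}=0$ for every $k\notin\{k_{1},\dots,k_{m(\lambda)}\}$, i.e.\ $Uz\in \textup{span}_{\mathbb{C}}\{e_{k_{1}},\dots,e_{k_{m(\lambda)}}\}$, and this is in turn the statement that $z=U^{*}\cdot(c_{1}\,e_{k_{1}}+\dots+c_{m(\lambda)}\,e_{k_{m(\lambda)}})$ for some $c_{i}\in \mathbb{C}$ not all zero. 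The reverse direction is immediate: any such $z$ satisfies $\textup{supp}(p)\subseteq X^{-1}(\lambda)$, so $V_{p}(X)=0$ and $E_{p}(X)=\lambda$, whence $[z]\in M_{f,\lambda}$.

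For the diffeomorphism $M_{f,\lambda}\cong \mathbb{P}(\mathbb{C}^{m(\lambda)})$, I would exhibit the explicit map
\begin{eqnarray*}
\mathbb{P}(\mathbb{C}^{m(\lambda)})\longrightarrow \mathbb{P}(\mathbb{C}^{n})\,,\,\,\,\,\,
[c_{1},\dots,c_{m(\lambda)}]\longmapsto U^{*}\cdot \big[c_{1}\,e_{k_{1}}+\dots +c_{m(\lambda)}\,e_{k_{m(\lambda)}}\big]\,.
\end{eqnarray*}
It is well-defined (scalar multiples of $(c_{1},\dots,c_{m(\lambda)})$ produce scalar multiples on the right), injective because $U^{*}$ is invertible and the $e_{k_{j}}$ are linearly independent, smooth, and its image is, by the previous paragraph, precisely $M_{f,\lambda}$; it is in fact the restriction of a holomorphic isometric embedding of $\mathbb{P}(\mathbb{C}^{m(\lambda)})$ into $\mathbb{P}(\mathbb{C}^{n})$.

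The main subtlety, rather than a genuine obstacle, lies in making sure that the chain of equivalences $f_{*_{[z]}}=0\Leftrightarrow \|\textup{grad}(f)_{[z]}\|=0\Leftrightarrow V_{p}(X)=0\Leftrightarrow X\textup{ is constant on }\textup{supp}(p)$ is tight; the first equivalence is trivial on a Riemannian manifold, the second is the precise content of Proposition~\ref{ppp cramer rao}, and the third uses nothing more than the finiteness of $\Omega$. This is what makes the whole proof essentially routine once the statistical ingredients are in place, and it is precisely this statistical route that replaces the usual spectral argument for Hermitian matrices.
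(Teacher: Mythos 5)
Your proof is correct and follows essentially the same route as the paper: both use the Cram\'{e}r-Rao equality to identify the critical-point condition $f_{*_{[z]}}=0$ with the vanishing of $V_{p}(X)$ for $p=(\pi_{\mathcal{P}_{n}}\circ\Phi_{U})([z])$, combine this with $E_{p}(X)=\lambda$ to force $\textup{supp}(p)\subseteq X^{-1}(\lambda)$, and then translate this via $p(x_{k})=|\langle z,U^{*}e_{k}\rangle|^{2}/\langle z,z\rangle$ into the linear conditions defining the asserted projective subspace. Your explicit verification of the identification $M_{f,\lambda}\cong\mathbb{P}(\mathbb{C}^{m(\lambda)})$ only spells out what the paper leaves implicit.
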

\begin{proof}
	Let $[z]$ be an element of $\mathbb{P}(\mathbb{C}^{n})\,.$ We have:
	\begin{eqnarray}
		&&[z]\in M_{f,\lambda}\,\,\,\Leftrightarrow\,\,\,\Big(f_{*_{[z]}}=0\,\,\,\textup{and}\,\,\,
			f([z])=\lambda\Big)\nonumber\\
		&\Leftrightarrow& V_{(\pi_{\mathcal{P}_{n}}\circ\Phi_{U})([z])}(X)=0\,\,\,\textup{and}\,\,\,
			E_{(\pi_{\mathcal{P}_{n}}\circ\Phi_{U})([z])}(X)=\lambda\nonumber\\
		&\Leftrightarrow& \sum_{k=1}^{n}\,(X(x_{k})-\lambda)^{2}(\pi_{\mathcal{P}_{n}}\circ\Phi_{U})([z])(x_{k})=0
			\,\,\,\textup{and}\,\,\,E_{(\pi_{\mathcal{P}_{n}}\circ\Phi_{U})([z])}(X)=\lambda\nonumber\\
		&\Leftrightarrow& 
			\left\lbrace
				\begin{array}{cc}
					&(X(x_{k})-\lambda)^{2}(\pi_{\mathcal{P}_{n}}\circ\Phi_{U})([z])(x_{k})=
						0\,\,\,\textup{for all}\,\,\,k\in\{1,...,n\}\\
					&\textup{and}\,\,\,E_{(\pi_{\mathcal{P}_{n}}\circ\Phi_{U})([z])}(X)=\lambda
				\end{array}
			\right.
			\nonumber\\
		&\Leftrightarrow&
			\left\lbrace
				\begin{array}{cc}
					&(\pi_{\mathcal{P}_{n}}\circ\Phi_{U})([z])(x_{k})=
						0\,\,\,\textup{for all}\,\,\,k\in\{1,...,n\}-
						\{k_{1},...,k_{m(\lambda)}\}\\
					&\textup{and}\,\,\,E_{(\pi_{\mathcal{P}_{n}}\circ\Phi_{U})([z])}(X)=\lambda
				\end{array}
			\right.
			\nonumber\\
		&\Leftrightarrow&(\pi_{\mathcal{P}_{n}}\circ\Phi_{U})([z])(x_{k})=
						0\,\,\,\textup{for all}\,\,\,k\in\{1,...,n\}-
						\{k_{1},...,k_{m(\lambda)}\}\,.
	\end{eqnarray}
	Now observe that for $k\in\{1,...,n\}\,,$
	\begin{eqnarray}
		(\pi_{\mathcal{P}_{n}}\circ\Phi_{U})([z])(x_{k})=
		\dfrac{|\langle U\cdot z,e_{k}\rangle|^{2}}{\langle z,z\rangle}=
		\dfrac{|\langle z,U^{*}e_{k}\rangle|^{2}}{\langle z,z\rangle}\,,
	\end{eqnarray}
	and thus, 
	\begin{eqnarray}
		[z]\in M_{f,\lambda}\,\,\,\Leftrightarrow\,\,\,
			\langle z,U^{*}e_{k}\rangle=0\,\,\,\textup{for all}\,\,\,k\in\{1,...,n\}-
			\{k_{1},...,k_{m(\lambda)}\}
	\end{eqnarray}
	from which the lemma follows. 
\end{proof}
	From Lemma \ref{lll description eigenmanifold}, we see that the cardinal 
	of $X^{-1}(\lambda)$ doesn't depend on the 
	decomposition of $f\,;$ we shall call this number the \textit{multiplicity} of the 
	eigenvalue $\lambda$ and denote it by $m(\lambda)\,.$ As we already saw,  
	$M_{f,\lambda}\cong \mathbb{P}\big(\mathbb{C}^{m(\lambda)}\big)\,.$ In \cite{Ashtekar}, $M_{f,\lambda}$ is called 
	the \textit{eigenmanifold} of $f$ associated to $\lambda\,.$\\

	Now recall that the geodesic distance $d(\,,\,)$ on 
	$\mathbb{P}(\mathbb{C}^{n})$ induced by the Fubini-Study metric $g_{FS}$ is given, 
	for $[z],[w]\in \mathbb{P}(\mathbb{C}^{n})\,,$ by:
	\begin{eqnarray}
		d\big([z],[w]\big)=
		\textup{cos}^{-1}\bigg(\dfrac{|\langle z,w \rangle|}{\|z\|\cdot\|w\|}\bigg)\,.
	\end{eqnarray}
	The above formula together with the Cauchy-Schwarz inequality readily implies 
	the following lemma:
\begin{lemma}\label{lll la forme de la projection sur}
	Let $\lambda\in \textup{spec}(f)$ be an eigenvalue and let $[z]$ be a point in 
	$\mathbb{P}(\mathbb{C}^{n})\,.$ Then their exists a unique point $p_{M_{f,\lambda}}([z])$ 
	in $M_{f,\lambda}$ verifying
	\begin{eqnarray}
		d\Big(p_{M_{f,\lambda}}([z]),[z]\Big)<d\big([w],[z]\big)
	\end{eqnarray}
	for all $[w]\in M_{f,\lambda}$ such that $[w]\neq p_{M_{f,\lambda}}([z])\,.$ 
	Moreover, if $X^{-1}(\lambda)=\{x_{k_{1}},...,x_{k_{m(\lambda)}}\}\,,$ then 
	\begin{eqnarray}
		p_{M_{f,\lambda}}([z])=\bigg[\sum_{l=1}^{m(\lambda)}\,\langle U^{*}e_{k_{l}},z \rangle
		\cdot U^{*}e_{k_{l}}\bigg]\,.
	\end{eqnarray}
\end{lemma}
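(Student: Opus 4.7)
My plan is to reduce the geodesic-distance problem on $\mathbb{P}(\mathbb{C}^{n})$ to a Cauchy--Schwarz optimization on the finite-dimensional vector space describing $M_{f,\lambda}$, exactly as the preamble to the lemma suggests.

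First, I would observe that since $\cos^{-1}\,:\,[0,1]\to[0,\pi/2]$ is strictly decreasing, minimizing $d([z],[w])$ over $[w]\in M_{f,\lambda}$ is equivalent to \emph{maximizing} the quantity
\begin{eqnarray}
Q([w]):=\frac{|\langle z,w\rangle|^{2}}{\|z\|^{2}\,\|w\|^{2}}\in[0,1]\,,
\end{eqnarray}
and that strict minimizers of $d(\,\cdot\,,[z])$ correspond to strict maximizers of $Q$.

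Second, I would use the explicit description of $M_{f,\lambda}$ given by Lemma \ref{lll description eigenmanifold}: every $[w]\in M_{f,\lambda}$ admits a representative of the form $w=U^{*}\bigl(\sum_{l=1}^{m(\lambda)}c_{l}\,e_{k_{l}}\bigr)$ for some nonzero $c=(c_{1},\ldots,c_{m(\lambda)})\in\mathbb{C}^{m(\lambda)}$. Since $U^{*}$ is unitary and the $e_{k_{l}}$ are orthonormal, $\|w\|^{2}=\|c\|^{2}$. Setting $\alpha_{l}:=\langle U^{*}e_{k_{l}},z\rangle$ and using that $\langle\,,\,\rangle$ is linear in the second argument, one gets $\langle z,w\rangle=\sum_{l}c_{l}\,\overline{\alpha_{l}}$, so $Q([w])=|\sum_{l}c_{l}\overline{\alpha_{l}}|^{2}/(\|z\|^{2}\|c\|^{2})$.

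Third, I would invoke the Cauchy--Schwarz inequality in $\mathbb{C}^{m(\lambda)}$: $|\sum_{l}c_{l}\overline{\alpha_{l}}|^{2}\le \|c\|^{2}\sum_{l}|\alpha_{l}|^{2}$, with equality if and only if $(c_{1},\ldots,c_{m(\lambda)})$ is a complex scalar multiple of $(\alpha_{1},\ldots,\alpha_{m(\lambda)})$. This equality case pins down a \emph{unique} point in $M_{f,\lambda}$, namely
\begin{eqnarray}
p_{M_{f,\lambda}}([z])=\bigg[\sum_{l=1}^{m(\lambda)}\alpha_{l}\,U^{*}e_{k_{l}}\bigg]=\bigg[\sum_{l=1}^{m(\lambda)}\langle U^{*}e_{k_{l}},z\rangle\,U^{*}e_{k_{l}}\bigg]\,,
\end{eqnarray}
which is the claimed formula. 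Strictness of the inequality for all other $[w]\in M_{f,\lambda}$ follows from the strictness of Cauchy--Schwarz away from proportional vectors, together with the strict monotonicity of $\cos^{-1}$.

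The only genuine subtlety I anticipate is the degenerate situation in which the vector $(\alpha_{1},\ldots,\alpha_{m(\lambda)})$ vanishes, i.e.\ $z$ is orthogonal to $U^{*}\mathrm{span}(e_{k_{1}},\ldots,e_{k_{m(\lambda)}})$. Then $Q\equiv 0$ on $M_{f,\lambda}$, the distance is constantly $\pi/2$, and no strict minimizer exists; so the lemma is implicitly understood modulo this measure-zero locus, which is harmless for the subsequent definition of the measure $P_{f,[z]}$ in \eqref{eee definiton de la measure associee}. This is the only point where a small caveat seems necessary; the rest is routine once the Cauchy--Schwarz reformulation is in place.
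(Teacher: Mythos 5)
Your proof is correct and follows exactly the route the paper intends: the paper gives no written proof, stating only that the geodesic-distance formula together with Cauchy--Schwarz ``readily implies'' the lemma, and your argument is precisely that computation spelled out. Your caveat about the degenerate case $\langle U^{*}e_{k_{l}},z\rangle=0$ for all $l$ (where the claimed representative vanishes and no strict minimizer exists) is a genuine gap in the lemma as stated that the paper silently ignores, and you are right that it is harmless for the definition of $P_{f,[z]}$.
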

	Clearly, if $[z]\in \mathbb{P}(\mathbb{C}^{n})\,,$ then 
	$d\big([z],p_{M_{f,\lambda}}([z])\big)$ is the geodesic distance between 
	$[z]$ and the subset $M_{f,\lambda}\,;$ we shall write
	$d([z],M_{f,\lambda})=d\big([z],p_{M_{f,\lambda}}([z])\big)\,.$ 
\begin{proposition}\label{ppp formule entre proba et cos}
	Let $f([z])=\int_{\Omega}\,X(x)[(\pi_{\mathcal{P}_{n}}\circ\Phi_{U})([z])](x)dx$ be a K\"{a}hler function,
	and let $\lambda\in \textup{spec}(f)$ be an eigenvalue. For 
	$[z]\in\mathbb{P}(\mathbb{C}^{n})\,,$ we have:
	\begin{eqnarray}\label{eee la mesure independante}
		\int_{X^{-1}(\lambda)}\,\big[(\pi_{\mathcal{P}_{n}}\circ \Phi_{U})([z])\big](x)dx=
		\cos^{2}\Big(d\big([z],M_{f,\lambda}\big)\Big)\,.
	\end{eqnarray}
\end{proposition}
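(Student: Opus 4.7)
The plan is to reduce both sides of \eqref{eee la mesure independante} to the same expression built from the inner products $\langle z, U^{*}e_{k_{l}}\rangle$, $l=1,\dots,m(\lambda)$, where $X^{-1}(\lambda) = \{x_{k_{1}},\dots,x_{k_{m(\lambda)}}\}$. Since all the necessary ingredients have already been assembled, the proof should be essentially a one-line computation once one unwinds the definitions.

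First I would rewrite the left-hand side explicitly. From the definition of $\pi_{\mathcal{P}_{n}}$ in \eqref{equation defintion de pipipi} together with $\Phi_{U}([z]) = [U\cdot z]$, and using unitarity of $U$ to shift $U$ onto the basis vector, one obtains
\begin{eqnarray}
\big[(\pi_{\mathcal{P}_{n}}\circ\Phi_{U})([z])\big](x_{k}) = \frac{|\langle U\cdot z, e_{k}\rangle|^{2}}{\langle z,z\rangle} = \frac{|\langle z, U^{*}e_{k}\rangle|^{2}}{\langle z,z\rangle},
\end{eqnarray}
so that the left-hand side of \eqref{eee la mesure independante} equals $\tfrac{1}{\|z\|^{2}}\sum_{l=1}^{m(\lambda)}|\langle z,U^{*}e_{k_{l}}\rangle|^{2}$.

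Next I would attack the right-hand side using Lemma \ref{lll la forme de la projection sur}, which gives the explicit representative $w := \sum_{l=1}^{m(\lambda)}\langle U^{*}e_{k_{l}},z\rangle\, U^{*}e_{k_{l}}$ of the nearest point $p_{M_{f,\lambda}}([z])$. Because $U$ is unitary, the family $\{U^{*}e_{k_{1}},\dots,U^{*}e_{k_{m(\lambda)}}\}$ is orthonormal in $\mathbb{C}^{n}$, so a direct computation gives
\begin{eqnarray}
\|w\|^{2} = \sum_{l=1}^{m(\lambda)}|\langle U^{*}e_{k_{l}},z\rangle|^{2}, \qquad \langle z,w\rangle = \sum_{l=1}^{m(\lambda)}|\langle z,U^{*}e_{k_{l}}\rangle|^{2} = \|w\|^{2}.
\end{eqnarray}
In particular $|\langle z,w\rangle|^{2} = \|w\|^{4}$, so the Fubini-Study distance formula recalled just before the proposition yields
\begin{eqnarray}
\cos^{2}\!\Big(d\big([z],M_{f,\lambda}\big)\Big) = \cos^{2}\!\big(d([z],[w])\big) = \frac{|\langle z,w\rangle|^{2}}{\|z\|^{2}\|w\|^{2}} = \frac{\|w\|^{2}}{\|z\|^{2}},
\end{eqnarray}
which is exactly the expression previously obtained for the left-hand side.

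Since everything reduces to the orthonormality of $\{U^{*}e_{k_{l}}\}$ and the known formulas for $\pi_{\mathcal{P}_{n}}\circ \Phi_{U}$, the distance on $\mathbb{P}(\mathbb{C}^{n})$, and $p_{M_{f,\lambda}}([z])$, there is no real obstacle; the only thing one has to be slightly careful about is the alignment of conjugation conventions in $\langle \cdot,\cdot\rangle$ (which is linear in the second argument in the paper), so that the two sums $\sum_{l}|\langle z,U^{*}e_{k_{l}}\rangle|^{2}$ and $\sum_{l}|\langle U^{*}e_{k_{l}},z\rangle|^{2}$ are correctly identified before concluding. As an immediate corollary, since the right-hand side of \eqref{eee la mesure independante} depends only on $f$ and $[z]$ (not on the chosen pair $(X,U)$), the probability $P_{f,[z]}$ defined in \eqref{eee definiton de la measure associee} is well defined on $\mathrm{spec}(f)$.
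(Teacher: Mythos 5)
Your argument is correct and is essentially the paper's own proof: both reduce the left-hand side to $\sum_{l}|\langle z,U^{*}e_{k_{l}}\rangle|^{2}/\|z\|^{2}$ and identify this with $\cos^{2}(d)$ via the explicit nearest point $\sum_{l}\langle U^{*}e_{k_{l}},z\rangle U^{*}e_{k_{l}}$ from Lemma \ref{lll la forme de la projection sur}, the only cosmetic difference being that the paper normalizes $z$ at the outset while you carry $\|z\|^{2}$ through. Your closing remark on the well-definedness of $P_{f,[z]}$ matches the paper's use of the proposition as well.
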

\begin{proof}
	Let $z\in \mathbb{C}^{n}$ be a normalized vector, and assume that 
	$X^{-1}(\lambda)=\{x_{k_{1}},...,x_{k_{m(\lambda)}}\}\,.$ 
	If $z_{\lambda}:=
	\sum_{l=1}^{m(\lambda)}\,\langle U^{*}e_{k_{l}},z \rangle\cdot U^{*}e_{k_{l}}\,,$ then 
	clearly $\langle z_{\lambda},z\rangle=\langle z_{\lambda},z_{\lambda}\rangle\,,$ 
	and according to Lemma \ref{lll la forme de la projection sur}, $[z_{\lambda}]=p_{M_{f,\lambda}}([z])$
	and $d([z],[z_{\lambda}])=d([z],M_{f,\lambda})\,.$ Hence, 
	\begin{eqnarray}
		&&\int_{X^{-1}(\lambda)}\,\big[(\pi_{\mathcal{P}_{n}}\circ \Phi_{U})([z])\big](x)dx =
			\sum_{l=1}^{m(\lambda)}\,(\pi_{\mathcal{P}_{n}}\circ \Phi_{U})([z])(x_{k_{l}})
			=\sum_{l=1}^{m(\lambda)}\,\big\vert\langle U\cdot z,e_{k_{l}}\rangle \big \vert^{2}\nonumber\\
			&&=\sum_{l=1}^{m(\lambda)}\,\big\vert\langle z,U^{*}\cdot e_{k_{l}}\rangle \big\vert^{2}
			=\big\vert \langle z_{\lambda},z \rangle  \big\vert
			=\bigg(\dfrac{\big\vert \langle z_{\lambda},z \rangle  \big\vert}
			{\|z_{\lambda}\|\cdot \|z\|}\bigg)^{2}
			=\bigg(\cos\Big(d\big([z],p_{M_{f,\lambda}}([z])\big)\Big)\bigg)^{2}
	\end{eqnarray}
	which is exactly \eqref{eee la mesure independante}. The proposition follows. 
\end{proof}
	A direct consequence of Proposition \ref{ppp formule entre proba et cos} is that 
	the measure $P_{f,[z]}$ on $\textup{spec}(f)$ 
	defined in \eqref{eee definiton de la measure associee} doesn't 
	depend on a particular decomposition of $f$ such as in 
	\eqref{ppp decomposition spectrale geometrique statistique}.\\

	With the above proposition, we have completed our ``statistical" study of the geometrical formulation of quantum mechanics 
	in finite dimension. 

	The important points are : the configuration space $\mathbb{P}(\mathbb{C}^{n})$ is the (formal) 
	K\"{a}hlerification of $\mathcal{P}_{n}$\,; observables are K\"{a}hler functions 
	$f\,:\,\mathbb{P}(\mathbb{C}^{n})\rightarrow \mathbb{R}$ that can be decomposed 
	as $f([z])=\int_{\Omega}\,X(x)[(\pi_{\mathcal{P}_{n}}\circ\Phi_{U})([z])](x)dx\,,$ 
	where $X\in \mathcal{A}_{\mathcal{P}_{n}^{\times}}$ and where $\Phi_{U}$ is a holomorphic 
	isometry of $\mathbb{P}(\mathbb{C})\,;$ the spectrum of a K\"{a}hler function is $\textup{Im}(X)$ and its associated 
	probability is $P_{f,[z]}(\lambda)=\int_{X^{-1}(\lambda)}\,X(x)[(\pi_{\mathcal{P}_{n}}\circ\Phi_{U})([z])](x)dx\,.$

	As we see, these quantities depends only on the exponential structure of 
	$\mathcal{P}_{n}^{\times}$\footnote{Although being physically clear, this statement has still, on mathematical grounds, 
	to be clarified since one has to make precise the passage from $\mathbb{P}(\mathbb{C}^{n})^{\times}$ to 
	its natural ``completion" $\mathbb{P}(\mathbb{C}^{n})\,.$ In this paper, we shall not treat this technical question, preferring 
	to focus on the general procedure and the physical applications.}\,. 
	Thus, we can try to generalize them to a given exponential family, as follows. \\
	
	Let $\mathcal{E}$ be an exponential family defined on a measured space $(\Omega,dx)$ with elements of the form $p(x;\theta)=
	\textup{exp}\big\{C(x)+\sum_{i=1}^{n}\,\theta_{i}F_{i}(x)-\psi(\theta)\big\}\,,$ 
	$\mathcal{A}_{\mathcal{E}}:=\textup{Vect}_{\mathbb{R}}\{1,F_{1},...,F_{n}\}$ and
	$\pi_{\mathcal{E}}\,:\,\mathcal{E}^{\mathbb{C}}\rightarrow \mathcal{E}$ its associated K\"{a}hlerification. 

	Regarding $\mathcal{E}$ as the underlying statistical model of a ``generalized quantum system", we are led to the following definitions :
	\begin{description}
		\item[$\bullet$] \textbf{Configuration space} : $\mathcal{E}^{\mathbb{C}}\,,$ viewed as a K\"{a}hler manifold,
		\item[$\bullet$] \textbf{Observables} : this is the set of functions 
			$f\,:\,\mathcal{E}^{\mathbb{C}}\rightarrow \mathbb{R}$ of the form 
			\begin{eqnarray}\label{eee j'ai ma montre enfin!!}
				\mathcal{E}^{\mathbb{C}}\rightarrow \mathbb{R}\,,\,\,\,
				z\mapsto \int_{\Omega}\,X(x)\big[(\pi_{\mathcal{E}}\circ \Phi)(z)\big](x)dx\,,
			\end{eqnarray}
			where $X\in \mathcal{A}_{\mathcal{E}}$ and where 
			$\Phi$ is a holomorphic isometry of $\mathcal{E}^{\mathbb{C}}\,.$ Such a function is necessarily 
			a K\"{a}hler function according to Proposition \ref{ccc les fonctions de la formes...},
		\item[$\bullet$] \textbf{Dynamics} : it is given by the flow of the Hamiltonian vector field $X_{H}$ associated to a 
			given observable $H\,:\,\mathcal{E}^{\mathbb{C}}\rightarrow \mathbb{R}$ with respect to the natural symplectic form 
			of the K\"{a}hler manifold $\mathcal{E}^{\mathbb{C}}\,,$
		\item[$\bullet$] \textbf{Spectrum of an observable} : the spectrum of an observable $f$ as in 
			\eqref{eee j'ai ma montre enfin!!} is
			given by the image of the random variable $X\,,$
			\begin{eqnarray}\label{eee perceuse}
				\textup{spec}(f):=\textup{Im}(X)\,,
			\end{eqnarray}
		\item[$\bullet$] \textbf{Probabilities associated to an observable} : the probability that an 
			observable $f$ as in \eqref{eee j'ai ma montre enfin!!} yields upon 
			measurement an eigenvalue belonging to a subset $A\subseteq \textup{spec}(f)$ while the system is in the state 
			$z\in \mathcal{E}^{\mathbb{C}}$ is :
			\begin{eqnarray}\label{eee les proba et la choucroute}
				P_{f,z}(A):=\int_{X^{-1}(A)}\,\big[(\pi_{\mathcal{E}}\circ \phi)(z)\big](x)dx\,.
			\end{eqnarray}
	\end{description}
	\begin{remark}
		Usually, the decomposition of a K\"{a}hler function $f$ as in \eqref{eee j'ai ma montre enfin!!} is not unique, and thus 
		$\textup{spec}(f)$ and $P_{f,z}$ are only well defined when invariance properties are met. 
	\end{remark}
	Of course, and from a physical point of view, the above definitions cannot be taken too literally. For 
	example when $\mathcal{E}=\mathcal{P}_{n}^{\times}\,,$ 
	then it is not $(\mathcal{P}_{n}^{\times})^{\mathbb{C}}\cong 
	\mathbb{P}(\mathbb{C}^{n})^{\times}\cong (\mathbb{C}^{*})^{n-1}$  
	which is interesting, rather its ``completion" $\mathbb{P}(\mathbb{C}^{n})\,,$ the formal K\"{a}hlerification of $\mathcal{P}_{n}\,.$ 
	Another similar example is the space of binomial distributions $\mathcal{B}(n,q)$ considered in \S\ref{sss binomial}. As we will 
	see, its K\"{a}hlerification is an open dense subset of the two dimensional sphere $S^{2}\,,$ and, as for $\mathcal{P}_{n}^{\times}\,,$ it 
	needs to be completed in order to recover a satisfactory description of the spin. 

	Despite these technical difficulties and ambiguities, we shall use the above definitions as a basis for our physical investigations, 
	and adapt them in an obvious way when a natural ``completion" exists. As we will see, this already leads 
	to interesting physical results.

\section{Binomial distribution and the spin of a particle}
	\label{sss binomial}
	Let $\mathcal{B}(n,q)$ be the space of binomial distributions defined over $\Omega:=\{0,...,n\}\,.$ By definition, 
	an element $p\in \mathcal{B}(n,q)$ is characterized by a real parameter $q\in\,\, ]0,1[$ verifying, for
	$k\in \Omega\,,$ $p(k)=\binom{n}{k}q^{k}(1-q)^{n-k}\,,$ where $\binom{n}{k}=\frac{n!}{(n-k)!k!}\,.$ The 
	set of binomial distributions forms a $1$-dimensional statistical manifold parameterized by $q$ and is easily seen 
	to be an exponential family, for one may write 
	\begin{eqnarray}\label{eee diner Peter bientot!!!}
		p(k)=\binom{n}{k}q^{k}(1-q)^{n-k}=
		\exp\bigg\{\ln{\binom{n}{k}}+k\theta-n\ln{\big(1+\exp{\theta}\big)}\bigg\}\,,
	\end{eqnarray} 
	where $\theta:=\ln(\frac{q}{1-q})\,.$ In particular, setting $C(k):=\ln{\binom{n}{k}}\,,$ $F(k):=k$ and $\psi(\theta):=
	n\ln{(1+\exp{\theta})}\,,$ one has $p(k)=\exp\{C(k)+\theta\cdot F(k)-\psi{(\theta)}\}\,.$\\

	Let $S^{2}:=\{(x,y,z)\in \mathbb{R}^{3}\,\vert\,x^{2}+y^{2}+z^{2}=1\}$ be the unit sphere endowed 
	with its natural K\"{a}hler structure $(g_{S^{2}},J_{S^{2}},\omega_{S^{2}})$ and let us write  
	$(S^{2})^{\times}:=S^{2}-\{(1,0,0),(-1,0,0)\}\,.$

\begin{proposition}\label{ppp kahlerification binomiale}
	If $S^{2}$ is endowed with the K\"{a}hler structure $(n\cdot g_{S^{2}},J_{S^{2}},\,n\cdot 
	\omega_{S^{2}})$ (i.e. its natural K\"{a}hler structure is multiplied
	by $n$)\,, then
	\begin{eqnarray}\label{eee kahlerification de bino}
		\mathcal{B}(n,q)^{\mathbb{C}}\cong (S^{2})^{\times}\,,
	\end{eqnarray}
	and in term of this identification, the map $\pi_{\mathcal{B}(n,q)}\,:\, \mathcal{B}(n,q)^{\mathbb{C}}\rightarrow \mathcal{B}(n,q)$ 
	becomes
	\begin{eqnarray}\label{eee pi pour bino}
		\pi_{\mathcal{B}(n,q)}\,:\,(S^{2})^{\times}\rightarrow\mathcal{B}(n,q)\,,\,\,\,\,\,
		\pi_{\mathcal{B}(n,q)}(x,y,z)(k)=\dfrac{1}{2^{n}}\binom{n}{k}(1+x)^{k}(1-x)^{n-k}\,.
	\end{eqnarray}
\end{proposition}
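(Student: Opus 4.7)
The plan is to exploit the universal inclusion $\iota: \mathcal{B}(n,q)\hookrightarrow \mathcal{P}_{n+1}^{\times}$ in order to reduce the statement to the already established identification $(\mathcal{P}_{n+1}^{\times})^{\mathbb{C}}\cong \mathbb{P}(\mathbb{C}^{n+1})^{\times}$ of Proposition \ref{proposition le bon group indeed}, combined with the Veronese embedding $\mathbb{P}(\mathbb{C}^{2})\hookrightarrow \mathbb{P}(\mathbb{C}^{n+1})$. Writing $\theta=\ln(q/(1-q))$ as in \eqref{eee diner Peter bientot!!!} and comparing with the canonical parameters of $\mathcal{P}_{n+1}^{\times}$ from Example \ref{example finite omega}, the image $\iota(\mathcal{B}(n,q))$ is the affine line $\theta_{k}=\ln\binom{n}{k}+k\theta$. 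Hence $\iota(\mathcal{B}(n,q))$ is $e$-autoparallel in $\mathcal{P}_{n+1}^{\times}$, the Fisher metric and exponential connection on $\mathcal{B}(n,q)$ are just the restrictions of those on $\mathcal{P}_{n+1}^{\times}$, and Dombrowski's construction is functorial, so $d\iota:T\mathcal{B}(n,q)\hookrightarrow T\mathcal{P}_{n+1}^{\times}$ is an isometric holomorphic embedding of K\"ahler manifolds.

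Composing with $\tau$ of \eqref{eee definition tau} and using the exponential representation, a direct computation yields
\begin{equation*}
(\tau\circ d\iota)(s\partial_{\theta}|_{p(\cdot;q)})=\Big[\sqrt{\tbinom{n}{k}}\,(\sqrt{q}\,e^{is/2})^{k}(\sqrt{1-q})^{n-k}\Big]_{k=0}^{n}=V\big([\sqrt{q}\,e^{is/2},\sqrt{1-q}\,]\big),
\end{equation*}
where $V:\mathbb{P}(\mathbb{C}^{2})\hookrightarrow\mathbb{P}(\mathbb{C}^{n+1})$, $[\alpha,\beta]\mapsto[\sqrt{\tbinom{n}{k}}\,\alpha^{k}\beta^{n-k}]_{k}$, is the degree-$n$ Veronese embedding. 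So $\tau\circ d\iota$ factors as $V\circ\widetilde{\tau}$ for a map $\widetilde{\tau}:T\mathcal{B}(n,q)\to\mathbb{P}(\mathbb{C}^{2})^{\times}$. The binomial identity $\sum_{k}\binom{n}{k}|\alpha|^{2k}|\beta|^{2(n-k)}=(|\alpha|^{2}+|\beta|^{2})^{n}$ shows that the K\"ahler potential of $g_{FS}^{(n+1)}$ pulls back via $V$ to $n$ times that of $g_{FS}^{(2)}$, so $V^{*}g_{FS}^{(n+1)}=n\,g_{FS}^{(2)}$ and similarly for $\omega_{FS}^{(n+1)}$, while $V$ is manifestly holomorphic. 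Combining with Proposition \ref{proposition la clé du bordel}(ii), $\widetilde{\tau}^{*}(4n\,g_{FS}^{(2)})=g|_{T\mathcal{B}(n,q)}$, so $\widetilde{\tau}$ is a holomorphic local isometry onto $\mathbb{P}(\mathbb{C}^{2})^{\times}$ endowed with $(4n\,g_{FS}^{(2)},J_{FS}^{(2)},4n\,\omega_{FS}^{(2)})$. Under the Hopf identification $\mathbb{P}(\mathbb{C}^{2})\cong S^{2}$ this corresponds precisely to $(n\cdot g_{S^{2}},J_{S^{2}},n\cdot\omega_{S^{2}})$ on the unit sphere, the two excluded points being $(\pm1,0,0)$. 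The formula \eqref{eee pi pour bino} then follows by restricting \eqref{equation defintion de pipipi} to the Veronese image, giving $\pi_{\mathcal{P}_{n+1}}(V([\alpha,\beta]))(k)=\binom{n}{k}|\alpha|^{2k}|\beta|^{2(n-k)}/(|\alpha|^{2}+|\beta|^{2})^{n}$, and using the Hopf relations $|\alpha|^{2}/(|\alpha|^{2}+|\beta|^{2})=(1+x)/2$, $|\beta|^{2}/(|\alpha|^{2}+|\beta|^{2})=(1-x)/2$.

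It remains to show $\mathcal{B}(n,q)^{\mathbb{C}}=T\mathcal{B}(n,q)/\Gamma(\mathcal{B}(n,q))\cong(S^{2})^{\times}$, i.e.\ that $\Gamma(\mathcal{B}(n,q))$ coincides with the deck group of the covering $\widetilde{\tau}$, which is $\mathbb{Z}$ generated by the shift $s\mapsto s+4\pi$ (the $4\pi$ being forced by the non-constancy of $k\mapsto(-1)^{k}$ for $n\geq1$). One inclusion is immediate, since a deck transformation of $\widetilde{\tau}$ is automatically a holomorphic isometry of $T\mathcal{B}(n,q)$ that preserves every function factoring through $\widetilde{\tau}$. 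The converse is the exact analogue of Lemma \ref{lemma comment le montrer?}, Lemma \ref{lemma la decomposition qu'il faut entre blabla...} and Lemma \ref{lemma necessairement!!!}: K\"ahler functions on $\mathbb{P}(\mathbb{C}^{2})$ separate points, and every K\"ahler function on $T\mathcal{B}(n,q)$ descends through $\widetilde{\tau}$ to a globally defined K\"ahler function on $\mathbb{P}(\mathbb{C}^{2})$, via the same Killing-extension argument (the target $\mathbb{P}(\mathbb{C}^{2})\cong S^{2}$ being simply connected and complete K\"ahler). These two facts force any $\gamma\in\Gamma(\mathcal{B}(n,q))$ to satisfy $\widetilde{\tau}\circ\gamma=\widetilde{\tau}$.

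The main obstacle is this last step. The subtle point is that $\mathcal{B}(n,q)$ itself carries only a $2$-dimensional space of K\"ahler functions (spanned by $1$ and the expectation parameter, by Proposition \ref{proposition c'est chouette nom}), which would not suffice to separate points of $(S^{2})^{\times}$. What saves the argument is that one separates points using the \emph{full} $4$-dimensional space $\mathscr{K}(\mathbb{P}(\mathbb{C}^{2}))\cong\mathfrak{u}(2)$ obtained after pushing down via $\widetilde{\tau}$ and applying Killing-extension; the extra symmetries present on the ambient K\"ahler quotient (but not visible on $\mathcal{B}(n,q)$ itself) are exactly what makes the identification work.
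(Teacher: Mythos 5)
Your proposal is correct, but it proves the proposition by a genuinely different route than the paper. The paper proceeds by direct two\--dimensional computation: it writes the Fisher metric in the natural parameter, $h_{F}(\theta)=n e^{\theta}/(1+e^{\theta})^{2}$, reads off the Dombrowski structure $(g,J,\omega)$ on $T\mathcal{B}(n,q)\cong\mathbb{R}^{2}$, exhibits the explicit basis $1$, $\tanh(\theta/2)$, $\cos(\dot\theta/2)/\cosh(\theta/2)$, $\sin(\dot\theta/2)/\cosh(\theta/2)$ of $\mathscr{K}(T\mathcal{B}(n,q))\cong\mathfrak{u}(2)$, deduces from this that $\Gamma(\mathcal{B}(n,q))\cong\mathbb{Z}$ acts by $\dot\theta\mapsto\dot\theta+4k\pi$, and then checks by hand that the map \eqref{eee defini je suis starbucks} is a K\"ahler isomorphism onto $(S^{2})^{\times}$ once the sphere's structure is weighted by $n$. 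You instead embed $\mathcal{B}(n,q)$ e\--autoparallelly into $\mathcal{P}_{n+1}^{\times}$, push through $\tau$ and factor via the degree-$n$ Veronese map, so that Proposition \ref{proposition la clé du bordel}, Proposition \ref{proposition le bon group indeed} and the pullback identity $V^{*}g_{FS}^{(n+1)}=n\,g_{FS}^{(2)}$ do the metric bookkeeping, and you identify $\Gamma(\mathcal{B}(n,q))$ with the deck group of $\widetilde\tau$ by rerunning Lemmas \ref{lemma comment le montrer?}, \ref{lemma la decomposition qu'il faut entre blabla...} and \ref{lemma necessairement!!!} with $\mathbb{P}(\mathbb{C}^{2})$ in place of $\mathbb{P}(\mathbb{C}^{n})$; the projection formula \eqref{eee pi pour bino} then drops out of \eqref{equation defintion de pipipi} via $q=(1+x)/2$, in agreement with the paper. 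What your approach buys is conceptual economy and reuse: the weight $n$ appears for a structural reason (the Veronese pullback), the $4\pi$-periodicity and $\Gamma\cong\mathbb{Z}$ come from covering theory rather than from guessing a basis of K\"ahler functions, and your formula for $\tau\circ d\iota$ is exactly the map the paper only introduces later (the unnamed lemma before $\Psi$ in \S\ref{sss binomial}), so the spin section's embedding $S^{2}\hookrightarrow\mathbb{P}(\mathbb{C}^{n+1})$ is obtained for free. What the paper's computation buys is the explicit basis of $\mathscr{K}(T\mathcal{B}(n,q))$ and the explicit $\mathfrak{u}(2)$ isomorphism, which are used later anyway. Two small points in your write-up deserve a line each rather than an appeal to ``functoriality'': (i) that for an e\--autoparallel submanifold with induced metric the Dombrowski data restrict (this follows because the connector of the ambient $\nabla^{(e)}$ applied to vectors tangent to $T\mathcal{B}(n,q)$ agrees with the intrinsic one, so horizontal subspaces match); and (ii) the inclusion ``deck group $\subseteq\Gamma$'' already uses your descent lemma (every K\"ahler function on $T\mathcal{B}(n,q)$ factors through $\widetilde\tau$), so that lemma should be stated before either inclusion; as your last paragraph correctly observes, it is the four\--dimensional $\mathscr{K}(T\mathcal{B}(n,q))$, not the two\--dimensional $\mathscr{K}(\mathcal{B}(n,q))$, that separates points, which is consistent with the paper's explicit basis.
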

	Proposition \ref{ppp kahlerification binomiale} follows from direct computations. Indeed, 
	in term of the natural parameter $\theta\in \mathbb{R}\,,$ the Fisher metric $h_{F}$ on $\mathcal{B}(n,q)$ is 
	(see \S\ref{section information geometry})
	\begin{eqnarray}
		h_{F}(\theta)=\frac{n\exp{\theta}}{(1+\exp{\theta})^{2}}
	\end{eqnarray}
	from which one easily sees that the canonical structure $(g,J,\omega)$ of $T\mathcal{B}(n,q)$ is, using the identification
	$T\mathcal{B}(n,q)\cong \mathbb{R}^{2}\,,\dot{\theta}\partial_{\theta}\mapsto (\theta,\dot{\theta})$ as 
	well as \eqref{equation definition G, omega, etc.},
	\begin{eqnarray}
		g(\theta,\dot{\theta})=\dfrac{n\exp{\theta}}{(1+\exp{\theta})^{2}}\cdot
		\begin{pmatrix}
				1&0\\
                 	        0&1
		\end{pmatrix}
	\,,\,\,\,\,\,
		J=\begin{pmatrix}
				0&-1\\
                 	        1&0
		\end{pmatrix}
	\,,\,\,\,\,\,
	\omega(\theta,\dot{\theta})=\dfrac{n\exp{\theta}}{(1+\exp{\theta})^{2}}\cdot
		\begin{pmatrix}
				0&1\\
                 	        -1&0
		\end{pmatrix}\,.
	\end{eqnarray}
	A basis for $\mathscr{K}\big(T\mathcal{B}(n,q)\big)$ is easily seen to be 
	\begin{eqnarray}
		1,\,\,\,\,\tanh(\theta/2)\,,\,\,\,\,\dfrac{\cos(\dot{\theta}/2)}{\cosh{(\theta/2)}}\,,\,\,\,\,
		\dfrac{\sin(\dot{\theta}/2)}{\cosh{(\theta/2)}}\,,
	\end{eqnarray}
	where $\tanh{(x)}=\frac{\exp{(x)}-\exp{(-x)}}{\exp{(x)}+\exp{(-x)}}$ and $\cosh{(x)}=\frac{\exp{(x)}+\exp{(-x)}}{2}\,.$
	
	As a Lie algebra, the space $\mathscr{K}\big(T\mathcal{B}(n,q)\big)\,,$ endowed with the natural Poisson bracket associated 
	to $\omega\,,$ is isomorphic to the Lie algebra $\mathfrak{u}(2)$ of the group of unitary matrices $U(2)$ via the isomorphism 
	\begin{eqnarray}
		&&1\mapsto \dfrac{1}{2n}
			\begin{pmatrix}
					i&0\\
                	 	        0&i
			\end{pmatrix}\,,\,\,\,
		\tanh(\theta/2)\mapsto\dfrac{1}{2n}
			\begin{pmatrix}
					i&0\\
                	 	        0&-i
			\end{pmatrix}\,,\,\,\,\nonumber\\
		&&\dfrac{\cos(\dot{\theta}/2)}{\cosh{(\theta/2)}}\mapsto\dfrac{1}{2n}
			\begin{pmatrix}
					0&-1\\
                	 	        1&0
			\end{pmatrix}\,,\,\,\,
		\dfrac{\sin(\dot{\theta}/2)}{\cosh{(\theta/2)}}\mapsto\dfrac{1}{2n}
			\begin{pmatrix}
					0&i\\
                	 	        i&0
			\end{pmatrix}\,.
	\end{eqnarray}

	Clearly (see \eqref{eee definition gamma}), 
	the group $\Gamma\big(\mathcal{B}(n,q)\big)$ is isomorphic to $\mathbb{Z}\,,$ its natural action on 
	$T\mathcal{B}(n,q)$ being $k\cdot (\theta,\dot{\theta})=(\theta,\dot{\theta}+4k\pi)\,,$ which is obviously free 
	and proper, and the quotient $T\mathcal{B}(n,q)/\Gamma\big(\mathcal{B}(n,q)\big)$ is diffeomorphic 
	to $(S^{2})^{\times}$ via the 
	map
	\begin{eqnarray}\label{eee defini je suis starbucks}
		[(\theta,\dot{\theta})]\mapsto \Big(\tanh(\theta/2)\,,\dfrac{\cos(\dot{\theta}/2)}{\cosh{(\theta/2)}}\,,
		\dfrac{\sin(\dot{\theta}/2)}{\cosh{(\theta/2)}}\Big)\,,
	\end{eqnarray}
	where $[(\theta,\dot{\theta})]:=\mathbb{Z}\cdot (\theta,\dot{\theta})=
	\big\{(\theta,\dot{\theta}+4k\pi)\in \mathbb{R}^{2}\,\vert\,k\in \mathbb{Z}\big\}\,.$

	A direct calculation shows that if the canonical K\"{a}hler structure of $(S^{2})^{\times}$ is weighted by $n\,,$ then 
	\eqref{eee defini je suis starbucks} defines a map which is an isomorphism of K\"{a}hler manifolds, 
	whence Proposition \ref{ppp kahlerification binomiale}.\\

	The canonical projection $\pi_{\mathcal{B}(n,q)}\,:\,(S^{2})^{\times}\cong\mathcal{B}(n,q)^{\mathbb{C}}\rightarrow \mathcal{B}(n,q)$ 
	can be naturally extended to the whole sphere $S^{2}$ 
	provided we adjoint two elements to $\mathcal{B}(n,q)\,,$ namely the Dirac measures $\delta_{0}$ and $\delta_{n}$ 
	defined, for $k\in \{0,...,n\}\,,$ by 
	\begin{eqnarray}
		\delta_{0}(k)=
			\left\lbrace 
			\begin{array}{cc}
				1\,\,\,\textup{if}\,\,\, k=0\\
				0\,\,\,\textup{if}\,\,\, k\neq 0
			\end{array}
			\right.
		\,,\,\,\,\,\,\,\,\,\delta_{n}(k)=
			\left\lbrace 
			\begin{array}{cc}
				1\,\,\,\textup{if}\,\,\, k=n\\
				0\,\,\,\textup{if}\,\,\, k\neq n
			\end{array}
			\right.
			\,.
	\end{eqnarray}
	Let us denote $\overline{\mathcal{B}}(n,q):=\mathcal{B}(n,q)\cup \{\delta_{0},\delta_{n}\}$ (disjoint union). 
	Clearly, the map $\pi_{\mathcal{B}(n,q)}\,:\,(S^{2})^{\times}\rightarrow \mathcal{B}(n,q)$ extends uniquely as a continuous map 
	$\pi_{\overline{\mathcal{B}}(n,q)}\,:\,S^{2}\rightarrow \overline{\mathcal{B}}(n,q)\,,$ with 
	$\pi_{\overline{\mathcal{B}}(n,q)}(-1,0,0):=\delta_{0}\,,\,\,\,\pi_{\overline{\mathcal{B}}(n,q)}(1,0,0):=\delta_{n}\,,$ 
	making the following diagram commutative :
	\begin{eqnarray}
		\xymatrix{
   			(S^{2})^{\times} \ar[r]^{\pi_{{\mathcal{B}}(n,q)}} \ar[d]_i & 
				\mathcal{B}(n,q) \ar[d]^j \\
   			S^{2}\ar[r]_{{\pi_{\overline{\mathcal{B}}(n,q)}}} & \overline{\mathcal{B}}(n,q)
  				}	
	\end{eqnarray}
	($i,j$ being inclusions). Observe that $\overline{\mathcal{B}}(n,q)$ is naturally a topological space since it is 
	included in $\mathcal{P}_{n+1}\,.$
		
	In this situation, one may show an analogue of Proposition \ref{lll nouvel methodologie!!!}, 
	and so, in the sequel we shall mainly focus on $\pi_{\overline{\mathcal{B}}(n,q)}$ instead of $\pi_{{\mathcal{B}}(n,q)}\,,$ 
	and heuristically we shall write 
	$\overline{\mathcal{B}}(n,q)^{\mathbb{C}}\cong S^{2}\,.$ The space $S^{2}$ will be thought of as the 
	\textit{formal} K\"{a}hlerification of $\overline{\mathcal{B}}(n,q)^{\mathbb{C}}\,.$\\

	The space of K\"{a}hler functions on $S^{2}$ is easily seen to be $\mathscr{K}(S^{2})=\textup{Vect}_{\mathbb{R}}\{1,x,y,z\}\,.$
	From the K\"{a}hlerification point of view however, it is more natural to give a description of 
	$\mathscr{K}(S^{2})$ similar to that of Proposition \ref{ppp decomposition spectrale geometrique statistique}. To this end, 
	observe that the group of holomorphic isometries of $S^{2}$ is $SO(3)$ and that the space of 
	functions $\mathcal{A}_{\mathcal{B}(n,q)}$ associated to $\mathcal{B}(n,q)$ 
	(see \eqref{eee definition les observables...} for the general definition of $\mathcal{A}_{\mathcal{E}}$) 
	is generated by the constant function 1 and the identity function $k\,:\,\Omega\rightarrow \Omega\,,$ 
	i.e. $X\in \mathcal{A}_{\mathcal{B}(n,q)}$ if and only if $X(k)=\alpha+\beta\cdot k$ for some $\alpha, \beta\in \mathbb{R}\,.$

\begin{proposition}\label{ppp decomposition Kah functions on S2}
	Let $f\,:\,S^{2}\rightarrow \mathbb{R}$ be a smooth function. Then $f$ is a K\"{a}hler function if and only if there exist 
	$X\in \mathcal{A}_{\mathcal{B}(n,q)}$ and $\phi\in SO(3)$ such that $f$ can be written
	\begin{eqnarray}\label{eee decompo de f}
		f(x,y,z)=\int_{\Omega}\,X(k)\big[(\pi_{\overline{\mathcal{B}}(n,q)}\circ \phi)(x,y,z)\big](k)dk\,, \,\,\,\,\,\,\,
		\big((x,y,z)\in S^{2}\big)
	\end{eqnarray}
	where $\pi_{\overline{\mathcal{B}}(n,q)}\,:\,S^{2}\rightarrow \overline{\mathcal{B}}(n,q)$ is the canonical 
	projection coming from the K\"{a}hlerification
	of $\mathcal{B}(n,q)\,.$ 
\end{proposition}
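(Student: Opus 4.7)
The plan is to compare both sides of the claimed equivalence with the explicit description $\mathscr{K}(S^{2})=\textup{Vect}_{\mathbb{R}}\{1,x,y,z\}$ recorded just before the proposition. Recall that $\mathcal{A}_{\mathcal{B}(n,q)}$ consists exactly of the affine functions $X(k)=\alpha+\beta k$, and that the canonical projection $\pi_{\overline{\mathcal{B}}(n,q)}(x,y,z)(k)=\frac{1}{2^{n}}\binom{n}{k}(1+x)^{k}(1-x)^{n-k}$ depends only on the first coordinate $x$. Since $\phi\in SO(3)$ acts linearly, the first coordinate of $\phi(x,y,z)$ is $t:=\vec{u}\cdot(x,y,z)$ for $\vec{u}$ the first row of the matrix of $\phi$ (in particular a unit vector).

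For the easy direction, I would plug in these explicit expressions and carry out the finite sum. Using the binomial theorem together with the fact that the mean of a binomial distribution with parameter $q=(1+t)/2$ is $nq$, one gets
\begin{eqnarray}
\int_{\Omega}X(k)\big[(\pi_{\overline{\mathcal{B}}(n,q)}\circ\phi)(x,y,z)\big](k)\,dk
=\alpha+\frac{\beta n}{2}(1+t)
=\Big(\alpha+\frac{\beta n}{2}\Big)+\frac{\beta n}{2}\,\vec{u}\cdot(x,y,z),
\end{eqnarray}
which is plainly an affine function of $(x,y,z)$ and therefore lies in $\textup{Vect}_{\mathbb{R}}\{1,x,y,z\}=\mathscr{K}(S^{2})$.

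For the converse, given an arbitrary K\"{a}hler function $f(x,y,z)=a_{0}+\vec{a}\cdot(x,y,z)$ with $\vec{a}=(a_{1},a_{2},a_{3})\in\mathbb{R}^{3}$, I would realize it via a suitable choice of $\phi$ and $X$. If $\vec{a}=0$, take $\phi=\textup{Id}$ and $X\equiv a_{0}$. Otherwise, pick any $\phi\in SO(3)$ whose first row equals $\vec{a}/\|\vec{a}\|$ (this is possible since one can always complete a unit vector to a positively-oriented orthonormal basis), set $\beta:=2\|\vec{a}\|/n$ and $\alpha:=a_{0}-\|\vec{a}\|$. The calculation of the first paragraph then reproduces $f$ exactly.

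There is no real obstacle: the argument reduces to a direct binomial identity plus a convention check for the $SO(3)$-action. The only small point of care is to be consistent about whether $\phi$ or $\phi^{-1}$ enters the formula for $\pi\circ\phi$, and to record that $\beta n/2$ can be made to equal any prescribed non-negative coefficient of the linear part (with the overall sign absorbed into the choice of $\vec{u}$, since $-\vec{u}$ is also the first row of some element of $SO(3)$).
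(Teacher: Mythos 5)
Your proof is correct and follows essentially the same route as the paper: the forward direction is the same explicit binomial computation giving $\alpha+\tfrac{\beta n}{2}+\tfrac{\beta n}{2}(ax+by+cz)$, and the converse is the same explicit solve for $\alpha,\beta$ and the first row of $\phi$ (the paper's stated solution $\alpha=u_{0}-\|(u,v,w)\|$, $\beta=\tfrac{2}{n}\|(u,v,w)\|$, $(a,b,c)=(u,v,w)/\|(u,v,w)\|$ coincides with yours). Both arguments rest on the previously recorded identification $\mathscr{K}(S^{2})=\textup{Vect}_{\mathbb{R}}\{1,x,y,z\}$, so nothing is missing.
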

\begin{proof}
	Let $X(k)=\alpha+\beta\cdot k \in \mathcal{A}_{{\mathcal{B}}(n,q)}$ be arbitrary and 
	let $\phi\in SO(3)$ be an isometry such that, using a matrix representation, 
	\begin{eqnarray}\label{eee me sens un peu lourd}
		\phi=\begin{pmatrix}
					a&b&c\\
               	 	        *&*&*\\
					*&*&*
			\end{pmatrix}
	\end{eqnarray}
	(in particular, the real numbers $a,b,c$ satisfy $\|(a,b,c)\|=1\,,$ i.e. $a^{2}+b^{2}+c^{2}=1)\,.$

	A simple calculation shows that for $(x,y,z)\in S^{2}\,,$
	\begin{eqnarray}
		\int_{\Omega}\,X(k)\big[(\pi_{\overline{\mathcal{B}}(n,q)}\circ 
		\phi)(x,y,z)\big](k)dk=\alpha+\frac{\beta n}{2}+\frac{\beta n}{2}(ax+by+cz)
	\end{eqnarray}
	which is a K\"{a}hler function on $S^{2}$ since it is a linear combination of $1,x,y,z\,.$ 

	Reciprocally, if $u_{0},u,v,w\in \mathbb{R}$ with $(u,v,w)\neq 0\,,$ then the equation 
	$\int_{\Omega}\,X(k)\big[(\pi_{\overline{\mathcal{B}}(n,q)}\circ \phi)(x,y,z)\big](k)dk=u_{0}+ux+vy+wz\,,$ with unknowns 
	$\alpha,\beta,a,b,c\,,$ admits as a solution
	\begin{eqnarray}\label{eee bar a la lune}
		\alpha=u_{0}\pm\|(u,v,w)\|\,,\,\,\,\,\beta=\mp \dfrac{2}{n}\|(u,v,w)\|\,,\,\,\,\,
		(a,b,c)=\mp \dfrac{1}{\|(u,v,w)\|}(u,v,w)\,,
	\end{eqnarray}
	where ``$\pm=-$" if $\beta>0$ and ``$\pm=+$" if $\beta<0\,,$ and where $\|\,.\,\|$ is the Euclidean norm. 
	If $(u,v,w)=0\,,$ then a solution is given 
	by $\alpha=u_{0}$ and $\beta=0$ ($\phi$ being arbitrary). The proposition follows.
\end{proof}
	Following our discussion at the end of \S\ref{section quantum mechanics}, we want to define the spectrum $\textup{spec}(f)$ 
	of a K\"{a}hler function $f(x,y,z)=\int_{\Omega}\,X(k)\big[(\pi_{\overline{\mathcal{B}}(n,q)}\circ \phi)(x,y,z)\big](k)dk$ on $S^{2}$ as 
	$\textup{Im}(X)\,,$ and its associated probability on $\textup{spec}(f)$ as 
	$P_{f,(x,y,z)}(A)=\int_{X^{-1}(A)}\,\big[(\pi_{\overline{\mathcal{B}}(n,q)}\circ 
	\phi)(x,y,z)\big](k)dk\,.$
	For this to be consistent, we need to check that these formulas are independent of the decomposition of $f$ given in Proposition 
	\ref{ppp decomposition Kah functions on S2}.

\begin{proposition}\label{ppp je suis avec gerry schwarz!!}
	Let $f(x,y,z)=u_{0}+ux+vy+wz$ be a K\"{a}hler function on $S^{2}\,.$ Then the 
	spectrum $\textup{spec}(f)$ and the probability $P_{f,(x,y,z)}$ are well defined, and we have 
	$\textup{spec}(f)=\{\lambda_{0},...,\lambda_{n}\}\,,$ where 
	\begin{eqnarray}\label{eee encore bar a la lune}
		\lambda_{k}=u_{0}+\dfrac{2}{n}\,\|(u,v,w)\|\cdot \Big(-\dfrac{n}{2}+k\Big)
	\end{eqnarray}
	($k=0,...,n\,;$ $\|\,.\,\|$ Euclidean norm), and if $(u,v,w)\neq 0\,,$ then
	\begin{eqnarray}
		P_{f,(x,y,z)}(\lambda_{k})=
				\dfrac{1}{2^{n}}\displaystyle\binom{n}{k}\Big(1+\dfrac{ux+vy+wz}{\|(u,v,w)\|}\Big)^{k}
				\Big(1-\dfrac{ux+vy+wz}{\|(u,v,w)\|}\Big)^{n-k}\,.
	\end{eqnarray}
\end{proposition}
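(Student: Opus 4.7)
The plan is to invoke Proposition \ref{ppp decomposition Kah functions on S2} to describe every decomposition of $f = u_{0} + ux + vy + wz$ in the form \eqref{eee decompo de f} via the explicit solutions \eqref{eee bar a la lune}, and then to compute $\textup{spec}(f)$ and $P_{f,(x,y,z)}$ directly from their definitions \eqref{eee perceuse} and \eqref{eee les proba et la choucroute}. The proposition then reduces to checking that the resulting expressions are invariant under the residual ambiguity in the decomposition.

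Assuming $(u,v,w)\neq 0\,,$ the solution formulas \eqref{eee bar a la lune} show that any admissible pair $(X,\phi)$ is determined, up to rows $2$ and $3$ of $\phi\in SO(3)\,,$ by a single sign $\epsilon\in\{+,-\}\,:$ either
\begin{eqnarray*}
X(k)=\big(u_{0}-\|(u,v,w)\|\big)+\tfrac{2}{n}\|(u,v,w)\|\,k
\end{eqnarray*}
with the first row of $\phi$ equal to $(u,v,w)/\|(u,v,w)\|\,,$ or the same with both signs reversed. Rows $2$ and $3$ of $\phi$ play no role since $\pi_{\overline{\mathcal{B}}(n,q)}$ in \eqref{eee pi pour bino} depends only on the first coordinate of its argument. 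A direct calculation will show that in the first case $X(k)=\lambda_{k}$ and in the second $X(k)=\lambda_{n-k}\,;$ either way $\textup{Im}(X)=\{\lambda_{0},\ldots,\lambda_{n}\}\,,$ which proves \eqref{eee encore bar a la lune} and the well-definedness of the spectrum.

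For the probability, writing $\tilde{x}:=(ux+vy+wz)/\|(u,v,w)\|$ for the first coordinate of $\phi(x,y,z)$ in the first case, one finds $X^{-1}(\lambda_{k})=\{k\}$ and hence, by \eqref{eee pi pour bino},
\begin{eqnarray*}
P_{f,(x,y,z)}(\lambda_{k}) = \big[(\pi_{\overline{\mathcal{B}}(n,q)}\circ \phi)(x,y,z)\big](k) = \tfrac{1}{2^{n}}\binom{n}{k}(1+\tilde{x})^{k}(1-\tilde{x})^{n-k}\,,
\end{eqnarray*}
which is exactly the claimed formula. The opposite sign choice exchanges $\tilde{x}\leftrightarrow -\tilde{x}$ and $k\leftrightarrow n-k\,,$ so that $\binom{n}{n-k}=\binom{n}{k}$ together with the corresponding swap of the two factors restores the same expression; this confirms independence from the decomposition. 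The degenerate case $(u,v,w)=0$ is trivial, since then $\beta=0$ forces $\textup{spec}(f)=\{u_{0}\}\,,$ consistent with $\lambda_{0}=\cdots=\lambda_{n}=u_{0}\,.$

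No serious obstacle is anticipated: Proposition \ref{ppp decomposition Kah functions on S2} does the conceptual work by pinning down the shape of a decomposition, and the rest is algebraic bookkeeping. The only mild subtlety will be the invariance check under the sign ambiguity, but this follows from the symmetry $k\leftrightarrow n-k$ in the binomial coefficient together with the $\tilde{x}\leftrightarrow -\tilde{x}$ symmetry of \eqref{eee pi pour bino}.
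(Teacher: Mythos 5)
Your proposal is correct and follows essentially the same route as the paper: both reduce the well-definedness of $\textup{spec}(f)$ and $P_{f,(x,y,z)}$ to the observation that a decomposition of $f$ is unique up to the simultaneous sign flip $\beta\mapsto-\beta\,,$ $(a,b,c)\mapsto-(a,b,c)$ (the paper derives this by equating two decompositions and using linear independence of $1,x,y,z\,,$ you by parameterizing all solutions via \eqref{eee bar a la lune}), and then exploit the $k\leftrightarrow n-k$ symmetry of the binomial coefficient. You in fact carry out explicitly the probability computation that the paper dismisses with ``by similar arguments,'' which is a welcome addition.
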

\begin{corollary}
	Let $(u,v,w)\in \mathbb{R}^{3}$ be a vector whose Euclidean norm is $j:=n/2\,,$ and let 
	$f\,:\,S^{2}\rightarrow \mathbb{R}$ be the K\"{a}hler function defined by $f(x,y,z):=ux+vy+wz\,.$ Then,
	\begin{eqnarray}\label{eee j'ai trop tres fain}
		&\bullet& \textup{spec}(f)=
			\big\{-j,-j+1,...,j-1,j\big\}\,,
			\nonumber\\
		&\bullet& P_{f,(x,y,z)}\big(-j+k\big)=\label{eee probab dans stern gerlach}
			\binom{n}{k}\Big(\cos^{2}(\theta/2)\Big)^{k}
			\Big(\sin^{2}(\theta/2)\Big)^{n-k}\,,
	\end{eqnarray}
	where $\theta$ is an angle satisfying $\frac{ux+vy+wz}{\|(u,v,w)\|}=\cos(\theta)\,.$
\end{corollary}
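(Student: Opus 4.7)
The plan is to derive this corollary by direct substitution into Proposition \ref{ppp je suis avec gerry schwarz!!}, which has already done the hard work of establishing the spectrum and probability formulas for an arbitrary K\"ahler function of the form $f(x,y,z) = u_{0} + ux + vy + wz$ on $S^{2}$. Here the hypothesis specializes this to $u_{0} = 0$ and $\|(u,v,w)\| = j = n/2$, so almost everything reduces to plugging these values in and applying a half-angle identity.

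First I would compute the spectrum. Substituting $u_{0} = 0$ and $\|(u,v,w)\| = n/2$ into \eqref{eee encore bar a la lune} gives
\begin{eqnarray}
\lambda_{k} = 0 + \dfrac{2}{n}\cdot\dfrac{n}{2}\cdot\Big(-\dfrac{n}{2}+k\Big) = -j + k
\end{eqnarray}
for $k = 0,1,\ldots,n$, so $\textup{spec}(f) = \{-j,-j+1,\ldots,j-1,j\}$, which is the first assertion.

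Next I would compute the probabilities. Since $(u,v,w)\neq 0$ (its norm is $n/2 > 0$), Proposition \ref{ppp je suis avec gerry schwarz!!} gives
\begin{eqnarray}
P_{f,(x,y,z)}(-j+k) = \dfrac{1}{2^{n}}\binom{n}{k}\Big(1 + \dfrac{ux+vy+wz}{\|(u,v,w)\|}\Big)^{k}\Big(1 - \dfrac{ux+vy+wz}{\|(u,v,w)\|}\Big)^{n-k}.
\end{eqnarray}
Writing $\cos\theta = \frac{ux+vy+wz}{\|(u,v,w)\|}$ and using the half-angle identities $1+\cos\theta = 2\cos^{2}(\theta/2)$ and $1-\cos\theta = 2\sin^{2}(\theta/2)$, the factor $2^{k}\cdot 2^{n-k} = 2^{n}$ cancels the $1/2^{n}$ prefactor, leaving exactly the claimed expression.

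There is no real obstacle here: the corollary is essentially a restatement of Proposition \ref{ppp je suis avec gerry schwarz!!} in the parameterization natural for the Stern--Gerlach interpretation. The only subtle point is to note that an angle $\theta$ with $\cos\theta = \frac{ux+vy+wz}{\|(u,v,w)\|}$ exists because $|\frac{ux+vy+wz}{\|(u,v,w)\|}| \leq 1$ by the Cauchy--Schwarz inequality (since $(x,y,z)\in S^{2}$), and that the final formula depends on $\theta$ only through $\cos^{2}(\theta/2)$ and $\sin^{2}(\theta/2)$, so the ambiguity in choosing $\theta$ (sign, addition of $2\pi$) does not matter.
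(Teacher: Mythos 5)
Your proof is correct and is exactly the route the paper intends: the corollary is stated without proof precisely because it is the specialization $u_{0}=0$, $\|(u,v,w)\|=n/2$ of Proposition \ref{ppp je suis avec gerry schwarz!!} combined with the half-angle identities $1\pm\cos\theta=2\cos^{2}(\theta/2)$ or $2\sin^{2}(\theta/2)$. Your remarks on the existence of $\theta$ via Cauchy--Schwarz and the irrelevance of its ambiguity are sound and slightly more careful than the paper itself.
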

\begin{remark}
	As mentioned in the introduction, \eqref{eee j'ai trop tres fain} gives 
	the probability that a spin-$j$ particle
	entering a second Stern-Gerlach device with maximum spin state (see Footnote \ref{fff footnote})
	is deflected into the $(-j+k)$-th outgoing beam, where $\theta$ is the angle between the two magnetic fields produced by the
	two Stern-Gerlach devices (see for example \cite{Marchildon}). 
	We will see subsequently how to obtain the probabilities corresponding to an incoming particle 
	when the eigenvalue of its spin operator along the magnetic field of the first Stern-Gerlach 
	device is arbitrary. 
	 
\end{remark}
\begin{proof}[Proof of Proposition \ref{ppp je suis avec gerry schwarz!!}]
	Let $f(x,y,z)=\int_{\Omega}\,X(k)\big[(\pi_{\overline{\mathcal{B}}(n,q)}\circ \phi)(x,y,z)\big](k)dk$ be a 
	K\"{a}hler function on $S^{2}$ with $X(k)=\alpha+\beta\cdot k$ ($\alpha,\beta\in \mathbb{R}$) and $\phi$ 
	having a matrix representation as in \eqref{eee me sens un peu lourd}. 
	We have to show that $\textup{spec}(f):=\textup{Im}(\alpha+\beta\cdot k)=\{\alpha,\alpha+\beta,...,
	\alpha+\beta\cdot n\}$ is independent of the decomposition of $f\,.$ For this, we need to check that if $f$ can be written 
	$f(x,y,z)=\sum_{k=0}^{n}\,(\overline{\alpha}+\overline{\beta}\cdot k)\big[({\pi_{\overline{\mathcal{B}}(n,q)}}
	\circ \overline{\phi})(x,y,z)\big](k)$ 
	with different $\overline{\alpha},
	\overline{\beta}\in \mathbb{R}$ and a different $\overline{\phi}\in SO(3)$ (with different 
	$\overline{a},\overline{b},\overline{c}\in \mathbb{R}$), 
	then $\textup{Im}(\alpha+\beta\cdot k)=\textup{Im}(\overline{\alpha}+\overline{\beta}\cdot k)\,.$ 
	To this end, observe that if $\sum_{k=0}^{n}\,(\alpha+\beta\cdot k)\big[(\pi_{\overline{\mathcal{B}}(n,q)}\circ \phi)(x,y,z)\big](k)=
	\sum_{k=0}^{n}\,(\overline{\alpha}+\overline{\beta}\cdot k)\big[(\pi_{\overline{\mathcal{B}}(n,q)}\circ \overline{\phi})(x,y,z)\big](k)$ 
	for all $(x,y,z)\in S^{2}\,,$ then 
	\begin{eqnarray}\label{eee double clef}
		\alpha+\dfrac{\beta n}{2} = \overline{\alpha}+
		\dfrac{\overline{\beta} n}{2}\,\,\,\,\,\textup{and}\,\,\,\,\,\dfrac{\beta n}{2}\cdot (a,b,c)
		=\dfrac{\overline{\beta} n}{2}\cdot (\overline{a},\overline{b},\overline{c})\,.
	\end{eqnarray}
	Taking into account the fact that $\|(a,b,c)\|=\|(\overline{a},\overline{b},\overline{c})\|=1\,,$ one immediately sees that 
	$\vert \beta\vert=\vert \overline{\beta}\vert\,,$ and we are led to the following three possibilities:
	\begin{eqnarray}
		&&\beta=\overline{\beta}=0,\,\alpha=\overline{\alpha}\,\,\,\,\,\textup{or}\,\,\,\,\,
		\beta \neq 0,\,\beta=\overline{\beta},\,\alpha=\overline{\alpha},\,(a,b,c)=
		\overline{a},\overline{b},\overline{c})\\
		&\textup{or}&\beta\neq 0,\,\beta=-\overline{\beta},\,\alpha=\overline{\alpha}-\beta\cdot n,\,(a,b,c)=
			-(\overline{a},\overline{b},\overline{c})\,.
	\end{eqnarray} 
	The only ambiguity is for the last case for which we have :
	\begin{eqnarray}\label{eee les doubles quand?}
		\textup{Im}(\overline{\alpha}+\overline{\beta}\cdot k)&=&\textup{Im}(\alpha+\beta\cdot n-\beta\cdot  k)=
		\{\alpha+\beta\cdot  n-\beta\cdot k\,\vert\,k=0,...,n\}\nonumber\\
		&=& \{\alpha+\beta\cdot  (n-k)\,\vert\,k=0,...,n\}=\{\alpha+\beta\cdot k\,\vert\,k=0,...,n\}\nonumber\\
		&=& \textup{Im}(\alpha+\beta\cdot k)\,.
	\end{eqnarray}
	Hence $\textup{Im}(\alpha+\beta\cdot k)=\textup{Im}(\overline{\alpha}+\overline{\beta}\cdot k)\,.$ It follows that 
	$\textup{spec}(f)$ is well defined. 

	In terms of $u_{0},u,v,w,$ and assuming $\beta>0$ for simplicity (the case 
	$\beta\leq 0$ leads to the same result),
	we have, using \eqref{eee bar a la lune}, 
	\begin{eqnarray}
		\textup{spec}(f)&=&\textup{Im}({\alpha}+{\beta}\cdot k)=\{\alpha+k\cdot \beta\,\vert\,k=0,...,n\}\nonumber\\
		&=&\Big\{u_{0}-\|(u,v,w)\|+k\cdot \dfrac{2}{n}\,\|(u,v,w)\|\,\Big\vert\,k=0,...,n\Big\}\nonumber\\
		&=&\Big\{u_{0}+\dfrac{2}{n}\,\|(u,v,w)\|\cdot (-\dfrac{n}{2}+k)\,\Big\vert\,k=0,...,n\Big\}\,.
	\end{eqnarray}
	This is exactly the first item in \eqref{eee encore bar a la lune}.

	Now, by similar arguments, one shows that $P_{f,(x,y,z)}$ is indeed well defined and yields a probability 
	on $\textup{spec}(f)\,.$ The proposition follows. 
\end{proof}

	As we already mentioned, not all the possibilities in the Stern-Gerlach experiment are exhausted with 
	\eqref{eee probab dans stern gerlach}. But of course, we would like to recover all these probabilities 
	following our ``statistical approach". This may be done as follows. 


	Let $j\,:\,\mathcal{B}(n,q)\hookrightarrow \mathcal{P}_{n+1}^{\times}$ be the canonical inclusion.
	The composition of its derivative $j_{*}\,:\,T\mathcal{B}(n,q)\hookrightarrow T\mathcal{P}_{n+1}^{\times}$ 
	with the map $\tau\,:\,T\mathcal{P}_{n+1}^{\times}\rightarrow \mathbb{P}(\mathbb{C}^{n+1})^{\times}$ 
	considered in \S\ref{section the motivating example} yields 
	a map $T\mathcal{B}(n,q)\rightarrow \mathbb{P}(\mathbb{C}^{n+1})$ that we would like to describe. 
	To this end, recall that the elements of $\mathcal{B}(n,q)$ can be parameterized by the natural parameter 
	$\theta\in\mathbb{R}$ as 
	$p(k;\theta):=\textup{exp}\,\big\{\ln\binom{n}{k}+k\theta-n\ln\big(1+\text{exp}(\theta)\big)\big\}$
	(see \eqref{eee diner Peter bientot!!!}), and that $T\mathcal{B}(n,q)$ is identified with $\mathbb{R}^{2}$ via the 
	map $\dot{\theta}\partial_{\theta}\mapsto (\theta,\dot{\theta})\,.$
\begin{lemma}
	In therm of the natural parameter $\theta\,,$ the map $\tau\circ j_{*}\,:\,
	T\mathcal{B}(n,q)\rightarrow \mathbb{P}(\mathbb{C}^{n+1})$ 
	reads :
	\begin{eqnarray}
		(\tau\circ j_{*})(\theta,\dot{\theta})&=&\Big[\,p(0;\theta)^{1/2},\,
		p(1;\theta)^{1/2}\,e^{{i}\dot{\theta}/2},...,\, p(n;\theta)^{1/2}\,e^{{i}\dot{\theta}\cdot n/2}\,\Big]\,.
	\end{eqnarray}
	\end{lemma}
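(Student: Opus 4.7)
The plan is to chase the definitions using the exponential representation of $T\mathcal{P}_{n+1}^{\times}$ recalled in \eqref{equation derivée plus identification expo}. First I would compute $j_{*}(\partial_{\theta}|_{\theta})$ by differentiating the curve $\theta\mapsto p(\,\cdot\,;\theta)$ of \eqref{eee diner Peter bientot!!!}. Since $\partial_{\theta}\ln p(k;\theta)=k-\psi'(\theta)$ and $\psi'(\theta)=n\,e^{\theta}/(1+e^{\theta})=E_{p_{\theta}}(k)$, one gets
\begin{eqnarray}
\tfrac{d}{d\theta}p(k;\theta)=p(k;\theta)\cdot\bigl(k-E_{p_{\theta}}(k)\bigr),
\end{eqnarray}
so \eqref{equation derivée plus identification expo} yields $j_{*}(\partial_{\theta}|_{\theta})=[u]_{p_{\theta}}$ with components $u_{k}=k-E_{p_{\theta}}(k)$ for $k=0,\ldots,n$. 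By linearity of $j_{*}$ one then obtains $j_{*}(\dot{\theta}\,\partial_{\theta})=[\dot{\theta}\,u]_{p_{\theta}}$.

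Second, I would apply the map $\tau$ of \eqref{eee definition tau} to this vector. By the very definition of $\tau$,
\begin{eqnarray}
(\tau\circ j_{*})(\theta,\dot{\theta})=\Bigl[\,p(0;\theta)^{1/2}\,e^{i\dot{\theta}(0-E_{p_{\theta}}(k))/2},\ldots,p(n;\theta)^{1/2}\,e^{i\dot{\theta}(n-E_{p_{\theta}}(k))/2}\,\Bigr].
\end{eqnarray}
Third, I would observe that the common factor $e^{-i\dot{\theta}E_{p_{\theta}}(k)/2}\in\mathbb{C}-\{0\}$ appears in every coordinate and can therefore be pulled out of the equivalence class in $\mathbb{P}(\mathbb{C}^{n+1})$, leaving exactly the formula claimed in the lemma.

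There is no real obstacle: the argument is a short computation once one has in hand the identity $\partial_{\theta}\ln p(k;\theta)=k-E_{p_{\theta}}(k)$ (which is the standard fact that, for an exponential family, the log-derivative with respect to a natural parameter is the corresponding sufficient statistic minus its expectation) and once one remembers that a global nonzero complex phase is invisible in projective space. The only bookkeeping care required is to align the index convention $\Omega=\{0,\ldots,n\}$ used for $\mathcal{B}(n,q)$ with the index convention $\{x_{1},\ldots,x_{n+1}\}$ used in the definition of $\tau$ in \S\ref{section the motivating example}.
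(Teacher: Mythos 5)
Your proposal is correct and follows essentially the same route as the paper: differentiate $\ln p(k;\theta)$ to identify the exponential-representation components $u_{k}=\dot{\theta}\bigl(k-E_{p_{\theta}}(k)\bigr)$, apply the definition of $\tau$, and absorb the common phase $e^{-i\dot{\theta}E_{p_{\theta}}(k)/2}$ into the homogeneity of the projective coordinates. No issues.
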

\begin{proof}
	Take a smooth curve $\theta(t)$ in $\mathbb{R}$ and set
	$\theta:=\theta(0)$ and $\dot{\theta}:=\frac{d}{dt}\big\vert_{0}\theta(t)\,.$ For $k\in \{0,...,n\}\,,$ we have : 
	\begin{eqnarray}
		\dfrac{d}{dt}\bigg\vert_{0}\,p\big(k;\theta(t)\big)&=&\dfrac{d}{dt}\bigg\vert_{0}\,\textup{exp}\bigg\{
		\ln\binom{n}{k}+k\theta(t)-n\ln\big(1+\textup{exp}(\theta(t)\big)\bigg\}\nonumber\\
		&=&\bigg[\dfrac{d}{dt}\bigg\vert_{0}\,\bigg(\ln\binom{n}{k}+k\theta(t)-n\ln\big(1+\textup{exp}(\theta(t))\big)
		\bigg)\bigg]\cdot p(k,\theta)
		\nonumber\\
		&=& \dot{\theta}\,\bigg(k-\dfrac{\textup{exp}(\theta)}{1+\textup{exp}(\theta)}\bigg)\cdot p(k,\theta)\,,
	\end{eqnarray}
	from which we see that $j_{*}(\theta,\dot{\theta})$ corresponds, 
	in the exponential representation of $T\mathcal{P}_{n+1}^{\times}$ (see \S\ref{section the motivating example}), to the vector 
	$[u(\theta)]_{j\big(p(.,\theta)\big)}\,,$ where $u(\theta)\in \mathbb{R}^{n+1}$ is defined, for $k\in \{0,...,n\}\,,$ by
	\begin{eqnarray} \label{equation trop mange}
		u(\theta)_{k}=
		\dot{\theta}\Big(k-\dfrac{\textup{exp}(\theta)}{1+\textup{exp}(\theta)}\Big)\,.
	\end{eqnarray}
	The lemma is now a simple consequence of \eqref{equation trop mange} together with the definition of $\tau$ 
	(see \eqref{eee definition tau}) and the homogeneity of the homogeneous coordinates of the complex 
	projective space $\mathbb{P}(\mathbb{C}^{n+1})\,.$
\end{proof}

	Recall that the group $\Gamma\big(\mathcal{B}(n,q)\big)$ is isomorphic $\mathbb{Z}$ and that its action on $T\mathcal{B}(n,q)$ is given 
	by $k\cdot (\theta,\dot{\theta})=(\theta,\dot{\theta}+4k\pi)\,.$ Clearly, $\tau\circ j_{*}$ is $\mathbb{Z}$-invariant,
	and since $T\mathcal{B}(n,q)/\Gamma\big(\mathcal{B}(n,q)\big)\cong (S^{2})^{\times}$ (see \eqref{eee defini je suis starbucks}), 
	we get a map $(S^{2})^{\times}\rightarrow \mathbb{P}(\mathbb{C}^{n+1})$ which can be conveniently described by the 
	following parametrization of the sphere,
	\begin{eqnarray}\label{eee mal a la tete}
		x=\cos(\alpha),\,\,\,\,\,y=\sin(\alpha)\cos(\beta),\,\,\,\,\,z=\sin(\alpha)\sin(\beta)\,,
	\end{eqnarray}
	where $\alpha\in [0,\pi]\,,$\,\,$\beta\in [0,2\pi]\,.$ With these parameters, the map 
	$(S^{2})^{\times}\rightarrow \mathbb{P}(\mathbb{C}^{n+1})$ reads
	\begin{eqnarray}
		(\alpha,\beta)\mapsto \big[\Psi(\alpha,\beta)\big]\,,
	\end{eqnarray}
	where $\Psi(\alpha,\beta)$ is the vector in $\mathbb{C}^{n+1}$ whose $k$th component is ($k=0,...,n$) :
	\begin{eqnarray}\label{eee fatigué}
		\Psi(\alpha,\beta)_{k}:=\binom{n}{k}^{1/2}
		\big(\cos(\alpha/2)\big)^{k}
		\big(\sin(\alpha/2)\big)^{n-k}\cdot e^{i\beta k}\,.
	\end{eqnarray}
	Observe that the map $\Psi$ is defined on the whole sphere, i.e. also for $\alpha=0$ and $\alpha=\pi$ (we agree that ``$0^{0}=1$"), 
	and that $\langle \Psi,\,\Psi\rangle=1\,,$ i.e. $\Psi$ is normalized 
	(here $\Psi:=\Psi(\alpha,\beta)$).


	By construction, if $pr\,:\,\mathbb{C}^{n+1}{-}\{0\}\rightarrow \mathbb{P}(\mathbb{C}^{n+1})$ is the canonical 
	projection, then we have the following commutative diagram, 
	\begin{eqnarray}
		\xymatrix{
    			S^{2} \ar[r]^{\Psi} \ar[d]_{\pi_{\overline{B}(n,q)}} & \mathbb{C}^{n+1} \ar[d]^{r} \ar[r]^{pr} 
				&  \mathbb{P}(\mathbb{C}^{n+1})\ar[d]^{{\pi_{\mathcal{P}_{n+1}}}} \\
    			\overline{\mathcal{B}}(n,q) \ar[r]_{\overline{j}} & \mathcal{P}_{n+1}\ar[r]_{Id} &\mathcal{P}_{n+1}
  				}	
	\end{eqnarray}
	where $\overline{j}\,:\,\overline{\mathcal{B}}(n,q) \hookrightarrow  \mathcal{P}_{n+1}$ is the canonical injection, and 
	where $r\,:\,\mathbb{C}^{n+1}-\{0\}\rightarrow \mathcal{P}_{n+1}$ is defined by 
	$r(z_{1},...,z_{n+1})(k):=\frac{\vert z_{k}\vert^{2}}{\|z\|^{2}}\,,$ where $z=(z_{1},...,z_{n+1})\,.$\\
	
	Regarding $S^{2}$ as an embedded submanifold of $\mathbb{P}(\mathbb{C}^{n+1})$ via the map $pr\circ\Psi\,,$ 
	we have the following proposition.
\begin{proposition}\label{ppp dans un café au lait}
	Every K\"{a}hler function $f$ on $S^{2}$ extends uniquely as a K\"{a}hler function 
	$\widehat{f}$ on $\mathbb{P}(\mathbb{C}^{n+1})\,,$ and the resulting linear map 
	$\mathscr{K}(S^{2})
	\rightarrow \mathscr{K}\big(\mathbb{P}(\mathbb{C}^{n+1})\big)\,,\,\,f\mapsto \widehat{f}$ satisfies
	\begin{eqnarray}
		\widehat{\{f,g\}}=\dfrac{1}{4}\{\widehat{f},\widehat{g}\}
	\end{eqnarray}
	for all $f,g\in \mathscr{K}(S^{2})\,.$
\end{proposition}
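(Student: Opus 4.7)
The plan is to exploit the fact that $\iota:=pr\circ\Psi$ is the degree-$n$ Veronese embedding $\mathbb{P}(\mathbb{C}^{2})\hookrightarrow\mathbb{P}(\mathrm{Sym}^{n}\mathbb{C}^{2})\cong\mathbb{P}(\mathbb{C}^{n+1})$, obtained by identifying $\Psi(\alpha,\beta)$ (up to an overall phase) with the $n$-th symmetric tensor power of a unit vector in $\mathbb{C}^{2}$. This realises $\iota(S^{2})$ as the $SU(2)$-orbit of a highest-weight line under the irreducible spin-$n/2$ representation, and makes the extension $f\mapsto\widehat{f}$ correspond---through the isomorphisms $\mathscr{K}(S^{2})\cong\mathfrak{u}(2)$ and $\mathscr{K}(\mathbb{P}(\mathbb{C}^{n+1}))\cong\mathfrak{u}(n+1)$ supplied by Lemma \ref{lemma comomentum map}---to (a scalar multiple of) the differential of this spin-$n/2$ representation.

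For \textbf{existence}, since $\mathscr{K}(S^{2})=\mathrm{span}_{\mathbb{R}}\{1,x,y,z\}$, it suffices to extend the four generators. Set $\widehat{1}:=1$. For the coordinate $x$, use the decomposition obtained in the proof of Proposition \ref{ppp decomposition Kah functions on S2} with $\phi=\mathrm{Id}$ and $X(k)=-1+2k/n$, and set
\begin{eqnarray*}
\widehat{x}([z])\;:=\;\int_{\Omega}\Big(-1+\tfrac{2k}{n}\Big)\pi_{\mathcal{P}_{n+1}}([z])(k)\,dk.
\end{eqnarray*}
By Proposition \ref{ppp decomposition spectrale geometrique statistique}, $\widehat{x}$ is a K\"{a}hler function on $\mathbb{P}(\mathbb{C}^{n+1})$, and the commutative diagram preceding Proposition \ref{ppp dans un café au lait} ($\pi_{\mathcal{P}_{n+1}}\circ\iota=\overline{j}\circ\pi_{\overline{\mathcal{B}}(n,q)}$) gives $\widehat{x}\circ\iota=x$ at once. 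For $y$ and $z$, I would run the same argument after composing with a rotation $\phi\in SO(3)$ sending the relevant axis onto the $x$-axis; the lift $\widehat{\phi}\in U(n+1)$ is furnished by the spin-$n/2$ representation, and the $SU(2)$-equivariance $\iota\circ\phi=\widehat{\phi}\circ\iota$ propagates the extension property. Linearity finishes the construction.

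For \textbf{uniqueness}, I would show that the restriction $\iota^{*}:\mathscr{K}(\mathbb{P}(\mathbb{C}^{n+1}))\to C^{\infty}(S^{2})$ is injective. By Lemma \ref{lemma comomentum map}, an element of $\ker\iota^{*}$ is $\xi^{A}$ for some $A\in\mathfrak{u}(n+1)$ satisfying $\langle\Psi(\alpha,\beta),A\,\Psi(\alpha,\beta)\rangle\equiv 0$. Expanding via \eqref{eee fatigué},
\begin{eqnarray*}
\langle\Psi,A\Psi\rangle=\sum_{k,l=0}^{n}A_{kl}\binom{n}{k}^{\!1/2}\!\binom{n}{l}^{\!1/2}\cos^{k+l}(\alpha/2)\sin^{2n-k-l}(\alpha/2)\,e^{i\beta(l-k)}.
\end{eqnarray*}
Linear independence of the characters $\{e^{im\beta}\}_{m\in\mathbb{Z}}$ separates the contributions of distinct values of $m:=l-k$, and then linear independence of the functions $\{\cos^{j}(\alpha/2)\sin^{2n-j}(\alpha/2)\}_{0\le j\le 2n}$ (visible after the substitution $t=\tan(\alpha/2)$, which turns them into $\{t^{2n-j}/(1+t^{2})^{n}\}$) forces each $A_{kl}=0$.

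The \textbf{Poisson bracket identity} is then a structural consequence: each $X_{\widehat{h}}$ for $h\in\{x,y,z\}$ is the infinitesimal generator of a one-parameter subgroup of $U(n+1)$ acting through the spin-$n/2$ representation, so $X_{\widehat{h}}$ is tangent to the $SU(2)$-invariant submanifold $\iota(S^{2})$; restriction therefore commutes with the Poisson bracket, up to a scalar factor which, by uniqueness, does not depend on $f,g$. The \textbf{main obstacle}---and the most delicate part of the proof---is pinning down that this factor is $\tfrac{1}{4}$. The value arises from reconciling three distinct scalings: the factor $4$ from Proposition \ref{proposition le bon group indeed} relating the K\"{a}hlerification of $\mathcal{P}_{n+1}^{\times}$ to the standard Fubini--Study form $\omega_{FS}$ (which is what enters the Poisson bracket of Lemma \ref{lemma comomentum map}), the factor $n$ weighting the natural K\"{a}hler structure on $S^{2}$ in Proposition \ref{ppp kahlerification binomiale}, and the degree-$n$ Veronese pullback $\iota^{*}(4\omega_{FS})=n\,\omega_{S^{2}}$. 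The cleanest way to clinch the constant is to test the identity on a single non-trivial pair such as $(x,y)$, comparing the explicit commutator in $\mathfrak{u}(n+1)$ of the matrices defining $\widehat{x},\widehat{y}$ with the pullback of the Lie-Poisson bracket on $S^{2}$.
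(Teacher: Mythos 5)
Your strategy is sound and genuinely different from the paper's. The paper proceeds by brute force: it determines the unique Hermitian matrix $\mathbf{Q}(f)$ satisfying \eqref{eee je dois voir tilmann, il a pas l'air content...} explicitly (Lemma \ref{lll existence de QQQQ}), by expanding $\langle \Psi,\mathbf{Q}(f)\Psi\rangle$ in the characters $e^{ik\beta}$ and the monomials $\cos^{j}(\alpha/2)\sin^{2n-j}(\alpha/2)$, and then verifies $\mathbf{Q}(\{f,g\})=-\tfrac{i}{2}[\mathbf{Q}(f),\mathbf{Q}(g)]$ by direct matrix computation (Lemma \ref{lll tiny toons}), from which the factor $\tfrac14$ drops out via Lemma \ref{lemma comomentum map}. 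You instead read the embedding as the Veronese/symmetric-power map, obtain existence of $\widehat{x}$ from the statistical decomposition together with the commutative diagram (this part is completely rigorous, and your computation $\widehat{x}\circ\iota=x$ is correct), obtain uniqueness from injectivity of $\iota^{*}$ on $\mathscr{K}(\mathbb{P}(\mathbb{C}^{n+1}))$ — which is in substance the same linear-independence argument the paper uses inside Lemma \ref{lll existence de QQQQ}, so that step is fine — and deduce the bracket identity from the fact that $\iota^{*}\omega_{FS}$ is a constant multiple of the symplectic form on $S^{2}$. Your route is more conceptual and explains \emph{why} the statement holds; the paper's route is more self-contained and, importantly for \S\ref{sss binomial}, actually produces the matrices \eqref{eee incroyabale pas de label!!!}, which your argument does not.

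Two points are asserted rather than proved and would need to be filled in. First, the $SU(2)$-equivariance $\iota\circ\phi=\widehat{\phi}\circ\iota$, on which both your treatment of $y,z$ and your tangency argument for $X_{\widehat{h}}$ rest: it does follow from the identification of $\Psi(\alpha,\beta)$ with $v^{\otimes n}$ (one checks from \eqref{eee fatigué} that $\Psi_{k}=\binom{n}{k}^{1/2}v_{0}^{k}v_{1}^{n-k}$ with $v_{0}=\cos(\alpha/2)e^{i\beta}$, $v_{1}=\sin(\alpha/2)$), but that identification, and the compatibility of the induced $SU(2)\to SO(3)$ map with the parametrization \eqref{eee mal a la tete}, must be verified; it is not automatic from anything stated in the paper. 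Second, the constant $\tfrac14$: you correctly reduce it to showing $\iota^{*}(4\omega_{FS})=n\,\omega_{S^{2}}$ (equivalently, to one test computation on the pair $(x,y)$), but you do not carry this out, and this is precisely where the paper's normalizations (the factor $4$ of Proposition \ref{proposition le bon group indeed} and the weight $n$ of Proposition \ref{ppp kahlerification binomiale}) must be reconciled. Also, the phrase ``up to a scalar factor which, by uniqueness, does not depend on $f,g$'' is not quite the right justification: the independence of the factor from $f,g$ comes from $\iota^{*}\omega_{FS}$ being a \emph{constant} multiple of $n\,\omega_{S^{2}}$ (e.g.\ because both are $SO(3)$-invariant $2$-forms on $S^{2}$), not from uniqueness of extensions. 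With these two verifications supplied, your proof is complete.
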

	We will show Proposition \ref{ppp dans un café au lait} with a series of Lemmas.\\

	Let $f\,:\,S^{2}\rightarrow \mathbb{R}$ be a K\"{a}hler function on the sphere, that is, a linear combinaison of 
	$1,x,y,z\,:$ $f(x,y,z)=u_{0}+ux+vy+wz\,.$ 
	According to the characterization of K\"{a}hler functions on $\mathbb{P}(\mathbb{C}^{n+1})$ given in Lemma \ref{lemma comomentum map}, 
	the function $f$ possesses a K\"{a}hler extension on $\mathbb{P}(\mathbb{C}^{n+1})$ if and only if their exists 
	a Hermitian matrix $\mathbf{Q}(f)\in \textup{Herm}(\mathbb{C}^{n+1})$ such that, in terms of the parameters 
	$\alpha\in [0,\pi]$ and $\beta\in [0,2\pi]$ introduced in \eqref{eee mal a la tete},
	\begin{eqnarray}\label{eee je dois voir tilmann, il a pas l'air content...}
		u_{0}+u\cos(\alpha)+v\sin(\alpha)\cos(\beta)+w\sin(\alpha)\sin(\beta)
		=\big\langle \Psi(\alpha,\beta),\,\,\mathbf{Q}(f)\cdot \Psi(\alpha,\beta)\big\rangle
	\end{eqnarray}
	for all $\alpha\in [0,\pi]$ and all $\beta\in [0,2\pi]\,.$ 

\begin{lemma}\label{lll existence de QQQQ}
	Their exists a unique Hermitian matrix $\mathbf{Q}(f)$ such that \eqref{eee je dois voir tilmann, il a pas l'air content...} holds
	for all $\alpha\in [0,\pi]$ and all $\beta\in [0,2\pi]\,.$ It is explicitly given by 
	\begin{eqnarray}\label{eee incroyabale pas de label!!!}
		\mathbf{Q}(f)_{kk}=u_{0}-u\cdot\dfrac{2}{n}\Big(\dfrac{n}{2}-k\Big)\,,\,\,\,\mathbf{Q}(f)_{l,l+1}=
		\dfrac{1}{n}\sqrt{(n-l)(1+l)}\cdot (v-iw)\,,
	\,\,\,\mathbf{Q}(f)_{ab}=0\,,
	\end{eqnarray}
	where $k=0,...,n\,,$ $l=0,...,n-1\,,$ $a=0,...,n-2$ and where $b$ is such that $a+2\leq b\leq n\,.$
\end{lemma}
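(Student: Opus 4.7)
The plan is to reduce the matrix equation \eqref{eee je dois voir tilmann, il a pas l'air content...} to a system of polynomial identities by substituting the explicit formula \eqref{eee fatigué} for $\Psi(\alpha,\beta)$ and comparing Fourier modes in $\beta$ together with polynomial coefficients in the half-angle variables. Concretely, I would begin by expanding
\begin{eqnarray*}
\big\langle\Psi(\alpha,\beta),\mathbf{Q}\cdot\Psi(\alpha,\beta)\big\rangle
=\sum_{k,l=0}^{n}\sqrt{\tbinom{n}{k}\tbinom{n}{l}}\,\cos^{k+l}(\alpha/2)\sin^{2n-k-l}(\alpha/2)\,e^{i\beta(l-k)}\mathbf{Q}_{kl},
\end{eqnarray*}
which is a finite Fourier series in $\beta$ supported on modes $m=l-k\in\{-n,\ldots,n\}$. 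Meanwhile, the RHS of \eqref{eee je dois voir tilmann, il a pas l'air content...} is supported only on the modes $m=0,\pm1$, since
\begin{eqnarray*}
v\sin\alpha\cos\beta+w\sin\alpha\sin\beta=\tfrac{1}{2}\sin\alpha\big[(v-iw)e^{i\beta}+(v+iw)e^{-i\beta}\big].
\end{eqnarray*}

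Setting $c:=\cos^2(\alpha/2)$ and $s:=\sin^2(\alpha/2)$ (so $c+s=1$, $\cos\alpha=c-s$, $\sin\alpha=2\sqrt{cs}$), matching each Fourier mode amounts to a polynomial identity in $c,s$. Since the Bernstein basis $\{c^{j}s^{n-j}\}_{j=0}^{n}$ is linearly independent, coefficients can be read off uniquely, which simultaneously yields existence and uniqueness of $\mathbf{Q}(f)$. For $|m|\geq 2$, the RHS mode is zero, so after factoring out $\cos^{|m|}(\alpha/2)$ or $\sin^{|m|}(\alpha/2)$ the Bernstein argument forces $\mathbf{Q}_{k,k+m}=0$ for all admissible $k$, giving the third formula in \eqref{eee incroyabale pas de label!!!}.

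For the diagonal mode $m=0$, I would use $1=(c+s)^{n}$ and $c-s=(c-s)(c+s)^{n-1}$ to rewrite $u_0+u(c-s)$ as a homogeneous polynomial of degree $n$ in $c,s$, then match with $\sum_{k}\binom{n}{k}\mathbf{Q}_{kk}\,c^{k}s^{n-k}$. The combinatorial identity $\binom{n-1}{k-1}-\binom{n-1}{k}=\frac{2k-n}{n}\binom{n}{k}$ then gives the stated value of $\mathbf{Q}_{kk}$. For $m=1$, factoring $\sin\alpha=2\sqrt{cs}$ on both sides of the mode-$1$ equation and then expanding $v-iw=(v-iw)(c+s)^{n-1}$ produces
\begin{eqnarray*}
\sqrt{\tbinom{n}{k}\tbinom{n}{k+1}}\,\mathbf{Q}_{k,k+1}=(v-iw)\tbinom{n-1}{k},
\end{eqnarray*}
which simplifies to the second formula via $\binom{n-1}{k}/\sqrt{\binom{n}{k}\binom{n}{k+1}}=\tfrac{1}{n}\sqrt{(n-k)(k+1)}$. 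The Hermiticity condition $\mathbf{Q}_{k+1,k}=\overline{\mathbf{Q}_{k,k+1}}$ then handles $m=-1$ automatically and is consistent with the coefficient coming from $\tfrac{1}{2}(v+iw)e^{-i\beta}$.

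There is no deep obstacle here; the work is essentially bookkeeping. The main care point will be the off-diagonal computation, where one has to simultaneously extract the $\sqrt{cs}$ factor coming from $\sin\alpha=2\sin(\alpha/2)\cos(\alpha/2)$, perform the Bernstein-basis matching on the remaining degree-$(n-1)$ polynomial, and simplify the ratio of binomial coefficients to the neat radical $\sqrt{(n-k)(k+1)}/n$.
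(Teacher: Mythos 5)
Your proposal is correct and follows essentially the same route as the paper: expand $\langle\Psi,\mathbf{Q}\Psi\rangle$ as a Fourier series in $\beta$, kill the modes $|m|\geq 2$ by linear independence, and then match Bernstein-basis coefficients in $\cos^{2}(\alpha/2),\sin^{2}(\alpha/2)$ for the $m=0$ and $m=\pm 1$ modes using exactly the binomial identities you cite. The only cosmetic slip is that for mode $m$ the common factor to extract is $\big(\cos(\alpha/2)\sin(\alpha/2)\big)^{|m|}$ rather than $\cos^{|m|}(\alpha/2)$ or $\sin^{|m|}(\alpha/2)$ alone, which does not affect the argument.
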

\begin{proof}
	For $a,b,k\in \{0,1,...,n\}$ and $\alpha\in [0,\pi]\,,$ set 
	\begin{description}
		\item[$\bullet$] $C_{ab}(\alpha):= \binom{n}{a}^{1/2}\binom{n}{b}^{1/2}\Big(\cos(\alpha/2)\Big)^{a+b}
			\Big(\sin(\alpha/2)\Big)^{2n-(a+b)}\cdot \mathbf{Q}(f)_{ab}\,,$
		\item[$\bullet$] $A_{k}(\alpha)
			:=\sum_{j=0}^{n-k}\,C_{j,k+j}(\alpha)\,.$
	\end{description}
	Observe that $\overline{C}_{ab}=C_{ba}$ (complex conjugate).
	By definition of $\Psi$ (see \eqref{eee fatigué}), we have
	\begin{eqnarray}
		&&\big\langle \Psi(\alpha,\beta),\,\,\mathbf{Q}(f)\cdot \Psi(\alpha,\beta)\big\rangle\nonumber\\
		&=&	\sum _{a,b=0}^{n}\,\binom{n}{a}^{1/2}\binom{n}{b}^{1/2}\Big(\cos(\alpha/2)\Big)^{a+b}\Big(\sin(\alpha/2)\Big)^{2n-(a+b)}
			e^{i\beta (b-a)}\mathbf{Q}(f)_{ab}\nonumber\\
		&=&\sum_{a,b=0}^{n}\,C_{ab}(\alpha)\,e^{i\beta(b-a)}
			=\sum_{k=0}^{n}\bigg(\sum_{a=0}^{n-k}\,C_{a,k+a}(\alpha)\bigg)\,e^{ik\beta}
			+\sum_{k=0}^{n}\bigg(\sum_{a=0}^{n-k}\,\overline{C}_{a,k+a}(\alpha)\bigg)\,e^{-ik\beta}\nonumber\\
		&=& \sum_{a=0}^{n}\,C_{aa}(\alpha)+\sum_{k=1}^{n}\bigg(A_{k}(\alpha)\,e^{ik\beta}+\overline{A}_{k}(\alpha)\,e^{-ik\beta}\bigg)
			\nonumber\\
		&=& A_{0}(\alpha)+2\,\sum_{k=1}^{n}\,\textup{Rel}\big(A_{k}(\alpha)\big)\cdot \cos(k\beta)-
			2\,\sum_{k=1}^{n}\,\textup{Im}\big(A_{k}(\alpha)\big)\cdot \sin(k\beta)\,,
	\end{eqnarray}
	and thus \eqref{eee je dois voir tilmann, il a pas l'air content...} may be rewritten :
	\begin{eqnarray}\label{eee manger ou pas?}
		&&u_{0}+u\cos(\alpha)+v\sin(\alpha)\cos(\beta)+w\sin(\alpha)\sin(\beta)\nonumber\\
		&=&A_{0}(\alpha)+2\,\sum_{k=1}^{n}\,\textup{Rel}\big(A_{k}(\alpha)\big)\cdot \cos(k\beta)-
		2\,\sum_{k=1}^{n}\,\textup{Im}\big(A_{k}(\alpha)\big)\cdot \sin(k\beta)\,.
	\end{eqnarray}
	Since the functions $\cos(k\beta)$ and $\sin(k'\beta)$ ($k=0,...,n$ and $k'=1,...,n$) are linearly independent, we 
	obtain:
	\begin{eqnarray}\label{eee boire un coup ce soir}
		&& A_{0}(\alpha)=u_{0}+u\cos(\alpha)\,,\,\,\,\,\,2\,\textup{Rel}(A_{1}(\alpha))=v\sin(\alpha)\,,\,\,\,\,\,
			2\,\textup{Rel}(A_{k}(\alpha))=0\,\,\,\textup{for all}\,\,\,k\geq 2\,,\nonumber\\	
		&\textup{and}& -2\,\textup{Im}(A_{1}(\alpha))=w\sin(\alpha)\,,\,\,\,\,\,
			-2\,\textup{Im}(A_{k}(\alpha))=0\,\,\,\textup{for all}\,\,\,k\geq 2\,.
	\end{eqnarray}
	From this set of equations, one sees that $\textup{Rel}(A_{k}(\alpha))=\textup{Im}(A_{k}(\alpha))=0$ for all $k\geq 2\,,$ i.e.
	$A_{k}(\alpha)=0$ for all $k\geq{2}\,.$ In view of the definition of $A_{k}(\alpha)\,,$ we thus have 
	\begin{eqnarray}
		\sum_{a=0}^{n-k}\,\binom{n}{a}^{1/2}\binom{n}{a+k}^{1/2}\Big(\cos(\alpha/2)\Big)^{2a+k}
		\Big(\sin(\alpha/2)\Big)^{2n-(2a+k)}\mathbf{Q}(f)_{a,k+a}=0\,.
	\end{eqnarray}
	It is not difficult to show that the functions $\cos(\alpha/2)^{k}\sin(\alpha/2)^{N-k}$ ($k=0,...,N\,,$ $N\in \mathbb{N}$) are linearly 
	independent, and thus for all $k\geq 2$ and for all $a$ such that $0\leq a\leq n-k\,,$ 
	\begin{eqnarray}
	\mathbf{Q}(f)_{a,k+a}=\mathbf{Q}(f)_{k+a,a}=0\,.
	\end{eqnarray}
	Hence, except for the three ``central diagonals", all entries of 
	$\mathbf{Q}(f)$ vanish ; this corresponds to the third equation in \eqref{eee incroyabale pas de label!!!}.

	Now there are still three equations in \eqref{eee boire un coup ce soir} we haven't used, namely $A_{0}(\alpha)=u_{0}+u\cos(\alpha)\,,$
	$2\,\textup{Rel}(A_{1}(\alpha))=v\sin(\alpha)$ and $-2\,\textup{Im}(A_{1}(\alpha))=w\sin(\alpha)\,.$ Using the definitions of 
	$A_{0}(\alpha)$ and $A_{1}(\alpha)\,,$  these equations reads
	\begin{eqnarray}\label{eee trop lourd ces saucisses!!}
			&\bullet& \displaystyle\sum_{a=0}^{n}\,\binom{n}{a}\Big(\cos(\alpha/2)\Big)^{2a}\Big(\sin(\alpha/2)\Big)^{2n-2a}
				\mathbf{Q}(f)_{aa}=u_{0}+u\cos(\alpha)\,,\\
			&\bullet& 2\,\displaystyle\sum_{a=0}^{n-1}\,\binom{n}{a}^{1/2}\binom{n}{a+1}^{1/2}\Big(\cos(\alpha/2)\Big)^{2a+1}
				\Big(\sin(\alpha/2)\Big)^{2n-(2a+1)}\,\mathbf{Q}(f)_{a,a+1}=(v-iw)\cdot\sin(\alpha)\,.\,\,\,\,\,\,\,\,
				\,\,\,\,\textbf{}\label{eee fait chaud ici, c'est bon}
	\end{eqnarray}
	Using the identity $\sin(\alpha)=2\sin(\alpha/2)\cos(\alpha/2)$ as well as 
	\begin{eqnarray}
		\binom{n}{a}^{1/2}\binom{n}{a{+}1}^{1/2}=\binom{n{-}1}{a}\dfrac{n}{\sqrt{(n{-}a)(a{+}1)}}\,;\,\,\,\,
		1=\sum_{a=0}^{n-1}\,\binom{n{-}1}{a}\Big(\cos^{2}(\alpha/2)\Big)^{a}\Big(\sin^{2}(\alpha/2)\Big)^{(n-1)-a}\,,
	\end{eqnarray}
	one rewrites \eqref{eee fait chaud ici, c'est bon} as
	\begin{eqnarray}\label{eee je me sens plein!!!!}
		&&\sum_{a=0}^{n-1}\,\binom{n-1}{a}\bigg[\dfrac{n}{\sqrt{(n-a)(a+1)}}\mathbf{Q}(f)_{a,a+1}-(v-iw)\bigg]\Big(\cos(\alpha/2)\Big)^{2a}
				\Big(\sin(\alpha/2)\Big)^{2n-2a-2}=0\,\,\,\,\,\textbf{}
	\end{eqnarray}
	from which it follows that $\mathbf{Q}(f)_{a,a+1}=1/n\cdot\sqrt{(n-a)(a+1)}\,(v-iw)$ for all $a$ such that $0\leq a\leq n-1\,.$

	Finally, from \eqref{eee trop lourd ces saucisses!!} together with the identity 
	\begin{eqnarray}
		\sum_{a=0}^{n}\,\Big(\dfrac{n}{2}-a\Big)\binom{n}{a}
		\Big(\cos(\alpha/2)\Big)^{2a}\Big(\sin(\alpha/2)\Big)^{2n-2a}=-\dfrac{n}{2}\cdot \cos(\alpha)\,,
	\end{eqnarray}
	one easily obtains the first equation in \eqref{eee incroyabale pas de label!!!}. The lemma follows. 
\end{proof}
\begin{lemma}\label{lll tiny toons}
	The map $\mathbf{Q}\,:\,\mathscr{K}(S^{2})\rightarrow \textup{Herm}(\mathbb{C}^{n+1})$ satisfies 
	\begin{eqnarray}
		\mathbf{Q}(\{f,g\})=-\dfrac{i}{2}\big[\mathbf{Q}(f),\mathbf{Q}(g)\big]
	\end{eqnarray}
	for all $f,g\in \mathscr{K}(S^{2})\,.$
\end{lemma}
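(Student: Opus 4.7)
The plan is to verify the identity on a basis, exploiting the fact that it is bilinear and antisymmetric in $(f, g)$ on both sides, and that $\mathbf{Q}$ itself is linear by inspection of \eqref{eee incroyabale pas de label!!!}. Since $\mathscr{K}(S^{2}) = \mathrm{Vect}_{\mathbb{R}}\{1, x, y, z\}$, it suffices to check the identity for pairs of basis elements. The cases involving $1$ are immediate: $\{1, g\}_{S^{2}} = 0$ for all $g$, and $\mathbf{Q}(1) = I_{n+1}$ (take $u_{0}=1,\,u=v=w=0$ in \eqref{eee incroyabale pas de label!!!}) commutes with every matrix, so both sides vanish. Only the three pairs $(x, y)$, $(y, z)$, $(z, x)$ require actual computation.

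Next, I would compute the Poisson brackets of the coordinate functions on $S^{2}$. Using the parameters $(\alpha, \beta)$ defined in \eqref{eee mal a la tete} and the K\"ahler symplectic form $n\omega_{S^{2}}$ (which in these coordinates reads, up to an orientation sign, $\pm n\sin\alpha\, d\alpha\wedge d\beta$), a short calculation yields $\{x, y\}_{S^{2}} = \pm z/n$ together with its cyclic analogues $\{y, z\}_{S^{2}} = \pm x/n$ and $\{z, x\}_{S^{2}} = \pm y/n$. Then I would turn to the matrix commutators: by \eqref{eee incroyabale pas de label!!!}, $\mathbf{Q}(x)$ is diagonal with entries $(2k - n)/n$, while $\mathbf{Q}(y)$ and $\mathbf{Q}(z)$ are tridiagonal with vanishing diagonal and explicit superdiagonal entries $\sqrt{(n-l)(l+1)}/n$ and $-i\sqrt{(n-l)(l+1)}/n$ respectively. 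Because $\mathbf{Q}(x)$ is diagonal, $[\mathbf{Q}(x), \mathbf{Q}(y)]$ is tridiagonal with zero diagonal; its $(k, k{+}1)$-entry is $\mathbf{Q}(y)_{k, k+1}\big(\mathbf{Q}(x)_{kk} - \mathbf{Q}(x)_{k+1, k+1}\big) = -2\sqrt{(n-k)(k+1)}/n^{2}$, which coincides (up to sign) with $(2i/n)\mathbf{Q}(z)_{k,k+1}$ and yields $-\tfrac{i}{2}[\mathbf{Q}(x), \mathbf{Q}(y)] = \pm \mathbf{Q}(z)/n = \mathbf{Q}(\{x, y\}_{S^{2}})$. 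The remaining two identities follow either by the analogous direct computation or, more economically, from the $SO(3)$-equivariance of the construction $f \mapsto \mathbf{Q}(f)$, which relates the three pairs by conjugation under a suitable unitary representation of $SO(3)$ on $\mathbb{C}^{n+1}$.

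The main obstacle is purely sign bookkeeping: the three cyclic identities must come out with matching signs, which forces a coherent choice of orientation of $S^{2}$ and of Poisson-bracket convention. Conceptually, however, once normalizations are aligned, $\tfrac{n}{2}\mathbf{Q}(x),\,\tfrac{n}{2}\mathbf{Q}(y),\,\tfrac{n}{2}\mathbf{Q}(z)$ are (up to a relabeling of axes and an overall sign) the three components of the spin-$n/2$ angular-momentum operator on $\mathbb{C}^{n+1}$, so the lemma reduces to the standard $\mathfrak{su}(2)$ commutation relations in the spin-$n/2$ irreducible representation---which is also a clean a posteriori explanation for why this particular map $\mathbf{Q}$ exists in the first place.
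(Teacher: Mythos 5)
Your proposal is correct and follows essentially the same route as the paper, whose proof of this lemma is simply ``by direct computations using the explicit formula for $\mathbf{Q}(f)$'': you reduce to the basis $\{1,x,y,z\}$, compute $\{x,y\}=\pm z/n$ for the form $n\,\omega_{S^{2}}$, and match the tridiagonal commutators, which checks out (with the paper's orientation one gets $\{x,y\}=-z/n$, consistent with your sign analysis). The closing remark identifying $\tfrac{n}{2}\mathbf{Q}(x),\tfrac{n}{2}\mathbf{Q}(y),\tfrac{n}{2}\mathbf{Q}(z)$ with the spin-$n/2$ generators is a nice conceptual addition the paper only makes after the lemma.
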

\begin{proof}
	By direct computations using \eqref{eee incroyabale pas de label!!!}.
\end{proof}
\begin{lemma}
	For $f,g\in \mathscr{K}(S^{2})\,,$ $\widehat{\{f,g\}}=\dfrac{1}{4}\{\widehat{f},\widehat{g}\}\,.$
\end{lemma}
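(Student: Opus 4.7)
The plan is to reduce the identity to a purely algebraic computation in $\mathfrak{u}(n+1)$ by means of the Lie algebra isomorphism of Lemma \ref{lemma comomentum map}, using the matrix $\mathbf{Q}(f)$ built in Lemma \ref{lll existence de QQQQ} to encode the K\"{a}hler extension $\widehat{f}$.

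First, I would identify $\widehat{f}$ explicitly as a function of the form $\xi^A$. By construction, equation \eqref{eee je dois voir tilmann, il a pas l'air content...} together with the homogeneity of the expression on both sides yields the formula $\widehat{f}([z]) = \langle z,\mathbf{Q}(f)\cdot z\rangle/\langle z,z\rangle$ for every $z\in\mathbb{C}^{n+1}-\{0\}$. Comparing this with the formula $\xi^A([z])=\frac{i}{2}\langle z,A\cdot z\rangle/\langle z,z\rangle$ of Lemma \ref{lemma comomentum map}, and noting that $-2i\mathbf{Q}(f)$ is skew-Hermitian whenever $\mathbf{Q}(f)$ is Hermitian, I read off the clean identification
\[
\widehat{f}=\xi^{-2i\mathbf{Q}(f)}.
\]

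Second, I would use that $A\mapsto \xi^A$ is both linear and a Lie algebra isomorphism between $\mathfrak{u}(n+1)$ and $\mathscr{K}(\mathbb{P}(\mathbb{C}^{n+1}))$. Bilinearity of the commutator then gives
\[
\{\widehat{f},\widehat{g}\}=\xi^{[-2i\mathbf{Q}(f),\,-2i\mathbf{Q}(g)]}=\xi^{-4[\mathbf{Q}(f),\mathbf{Q}(g)]}.
\]
On the other hand, substituting Lemma \ref{lll tiny toons} into the identification above,
\[
\widehat{\{f,g\}}=\xi^{-2i\mathbf{Q}(\{f,g\})}=\xi^{(-2i)(-i/2)[\mathbf{Q}(f),\mathbf{Q}(g)]}=\xi^{-[\mathbf{Q}(f),\mathbf{Q}(g)]}.
\]
Applying linearity of $A\mapsto\xi^A$ one last time to compare the two displays yields $\{\widehat{f},\widehat{g}\}=4\,\widehat{\{f,g\}}$, which is the desired identity.

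The main obstacle is very mild and reduces to the scalar bookkeeping produced by the two scaling factors $-2i$ (from the identification $\widehat{f}=\xi^{-2i\mathbf{Q}(f)}$) and $-i/2$ (from Lemma \ref{lll tiny toons}); their interplay is precisely what forces the factor $\tfrac{1}{4}$, since the commutator picks up $(-2i)^2=-4$ which is only partially undone by the $-i/2$ appearing in the expression of $\mathbf{Q}(\{f,g\})$. There is no genuine analytic content left at this stage: once the identification $\widehat{f}=\xi^{-2i\mathbf{Q}(f)}$ is recorded, everything reduces to algebra inside $\mathfrak{u}(n+1)$.
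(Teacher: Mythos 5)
Your proposal is correct and follows essentially the same route as the paper: both rest on the identification $\widehat{f}=\xi^{-2i\mathbf{Q}(f)}$, Lemma \ref{lll tiny toons}, and the Lie algebra isomorphism $A\mapsto\xi^{A}$ of Lemma \ref{lemma comomentum map}. The only cosmetic difference is that you extract the scalar $(-2i)^{2}=-4$ by linearity of $A\mapsto\xi^{A}$, whereas the paper rewrites $\xi^{i\mathbf{Q}(f)}$ as $-\widehat{f/2}$; the bookkeeping is identical.
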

\begin{proof}
	The lemma is a consequence of Lemma \ref{lll tiny toons} together with Lemma \ref{lemma comomentum map}. Indeed, 
	using the Lie algebra isomorphism $\mathfrak{u}(n{+}1)\rightarrow \mathscr{K}(\mathbb{P}(\mathbb{C}^{n+1}))\,,A\mapsto \xi^{A}$ 
	given in Lemma \ref{lemma comomentum map} and the fact that $\widehat{f}=\xi^{-2i\mathbf{Q}(f)}\,,$ we see that
	\begin{eqnarray}
		&&\widehat{\{f,g\}}=\xi^{-2i\mathbf{Q}(\{f,g\})}=\xi^{i^{2}[\mathbf{Q}(f),\mathbf{Q}(g)]}=
		\xi^{[i\mathbf{Q}(f),i\mathbf{Q}(g)]}=\big\{\xi^{i\mathbf{Q}(f)},\xi^{i\mathbf{Q}(g)}\big\}\nonumber\\
		&=& \big\{\xi^{-2i\mathbf{Q}(-f/2)},\xi^{-2i\mathbf{Q}(-g/2)}\big\}=
		\{-\widehat{f/2},-\widehat{g/2}\}=
		\dfrac{1}{4}\{\widehat{f},\widehat{g}\}\,.
	\end{eqnarray}
	The lemma follows. 
\end{proof}
	Proposition \ref{ppp dans un café au lait} follows from the last three lemmas. \\

	Since $\textup{Vect}_{\mathbb{R}}\{x,y,z\}\cong \mathfrak{su}(2)\,,$ the restriction of the 
	map $-1/2i\,\mathbf{Q}$ to $\textup{Vect}_{\mathbb{R}}\{x,y,z\}$ yields an unitary representation 
	$\mathfrak{su}(2)\rightarrow \mathfrak{u}(n+1)$ which is actually irreducible. 
	Hence, by considering the problem of extending K\"{a}hler functions on $S^{2}$ to $\mathbb{P}(\mathbb{C}^{n+1})\,,$ 
	we have been let to compute the irreducible unitary representations of the Lie algebra $\mathfrak{su}(2)\,,$ which is exactly 
	what physicists use to describe the spin of a particle. This means that we can recover all the probabilities 
	in the Stern-Gerlach experiment and that, \textit{in fine}, all information on the spin is encoded in 
	the binomial distribution $\mathcal{B}(n,q)\,.$

\section{Gaussians and the quantum harmonic oscillator}\label{section other examples} 
%
	Let $\mathcal{N}(\mu,1)$ be the set 
	of all probability density functions defined over $\Omega:=\mathbb{R}$ by 
	\begin{eqnarray}
		p(\xi;\mu):=\dfrac{1}{\sqrt{2\pi}}\,\textup{exp}\bigg\{-\dfrac{(\mu-\xi)^{2}}{2}\bigg\}\,,
	\end{eqnarray}
	where $\xi\in \Omega$ and $\mu\in \mathbb{R}\,.$ 

	Since $p(\xi;\mu)=\textup{exp}\big\{C(\xi)+\theta\cdot F(\xi)-\psi(\theta)\big\}$ with $C(\xi)=-1/2\cdot \xi^{2}\,,$ $\theta=\mu\,,$ 
	$F(\xi)=\xi$ 
	and $\psi(\theta):=1/2\cdot \theta^{2}+\ln(\sqrt{2\pi})\,,$ $\mathcal{N}(\mu,1)$ is an exponential family 
	whose natural parameter is $\theta=\mu\,.$ 

	The Fisher metric is easily seen to be constant $h_{F}(\theta)\equiv 1\,,$ which implies that 
	$T\mathcal{N}(\mu,1)\cong \mathbb{C}$ (isomorphism of K\"{a}hler manifolds). Moreover, it is not difficult 
	to see that the space $\mathscr{K}(\mathbb{C})$ of K\"{a}hler functions on $\mathbb{C}$ is 
	\begin{eqnarray}
	\mathscr{K}(\mathbb{C})=\textup{Vect}_{\mathbb{R}}\Big\{1,\,\,x,\,\,y,\,\,\dfrac{x^{2}+y^{2}}{2}\Big\}
	\end{eqnarray}
	 (here $x$ and $y$ are respectively the real and imaginary parts of $z\in \mathbb{C}$), with the following 
	commutators 
	\begin{eqnarray} 
		\{1,\,.\,\}=0\,,\,\,\,\{x,y\}=1\,,\,\,\,\Big\{x,\dfrac{x^{2}+y^{2}}{2}\Big\}=y\,,\,\,\,
		\Big\{y,\dfrac{x^{2}+y^{2}}{2}\Big\}=-x\,.
	\end{eqnarray}
	Clearly, $\Gamma(\mathcal{N}(\mu,1))$ is trivial. Hence, 
	$\mathcal{N}(\mu,1)^{\mathbb{C}}=T\mathcal{N}(\mu,1)/\{e\}\cong \mathbb{C}\,,$ i.e., 
	\begin{eqnarray}
		\mathcal{N}(\mu,1)^{\mathbb{C}}\cong\mathbb{C}\,.
	\end{eqnarray}
 	The canonical projection 
	$\pi_{\mathcal{N}(\mu,1)}\,:\,\mathbb{C}\rightarrow \mathcal{N}(\mu,1)$ is easily seen to be 
	\begin{eqnarray}
		\pi_{\mathcal{N}(\mu,1)}\,:\,\mathbb{C}\rightarrow \mathcal{N}(\mu,1)\,,\,\,\,\,
		\pi_{\mathcal{N}(\mu,1)}(z)(\xi)=\dfrac{1}{\sqrt{2\pi}}\,\exp\bigg\{-\dfrac{(x-\xi)^{2}}{2}\bigg\}\,,
	\end{eqnarray}
	where $z=x+iy\in \mathbb{C}\,.$ 

	For the spectral theory, observe that $\mathcal{A}_{\mathcal{N}(\mu,1)}=\textup{Vect}_{\mathbb{R}}\{1,\xi\}\,,$ where 
	$\xi\,:\,\mathbb{R}\rightarrow \mathbb{R}$ is the identify, and that the group of 
	holomorphic isometries of $\mathbb{C}$ is the group $E(2)=\mathbb{R}^{2}\rtimes O(2)$ of Euclidean isometries. 
	As a simple calculation shows, the only K\"{a}hler functions $f$ on $\mathbb{C}$ that can be written 
	as $f(z)=\int_{\mathbb{R}}\,X(\xi)\big[(\pi\circ \phi)(z)\big]\,d\xi\,,$ where $X\in \mathcal{A}_{\mathcal{N}(\mu,1)}$ 
	and $\phi\in E(2)\,,$
	are functions of the form $f(z)=u_{0}+ux+vy\,,$ where $u_{0},u,v\in \mathbb{R}\,.$ For such function, 
	the subset $\textup{spec}(f):=\textup{Im}(X)$ is well defined, i.e. independent of the decomposition of $f\,,$ and so is 
	its associated probability $P_{f,z}\,.$

	For $f(z)=u_{0}+ux+vy\,,$ calculations yield $\textup{spec}(f)=\mathbb{R}$ if $\textup{grad}(f) \not= 0\,,$ 
	$\textup{spec}(f)=\{u_{0}\}$ if $\textup{grad}(f) = 0$ and 
	\begin{eqnarray}
		P_{f,z}=
		\left\lbrace
		\begin{array}{ccc}
			\dfrac{1}{\sqrt{2\pi}}\dfrac{1}{\|\textup{grad}(f)\|}\cdot 
			\textup{exp}\bigg\{-\dfrac{(\xi-f(z))^{2}}{2\|\textup{grad}(f)\|^{2}}\bigg\}
			&\textup{if}& \textup{grad}(f) \not=0\\
			\delta_{u_{0}} &\textup{if}& \textup{grad}(f)=0
		\end{array}
		\right.\,,
	\end{eqnarray}
	where $\textup{grad}(f)=(u,v)$ denotes the Riemannian gradient of $f$ and $\|\textup{grad}(f)\|=\sqrt{u^{2}+v^{2}}$ its Euclidean norm. \\

	Let us now relate the above formulas with the quantum harmonic oscillator. 
	Let $\hbar$ be a nonnegative real constant and let 
	$\Psi\,:\, \mathbb{C}\rightarrow C^{\infty}(\mathbb{R},\mathbb{C})$ be the function defined for $\xi\in \mathbb{R}$ 
	and $z=x+iy\in \mathbb{C}$ by
	\begin{eqnarray}
		\Psi(z)(\xi):=\dfrac{1}{(2\pi)^{1/4}}\,\textup{exp}\bigg\{-\dfrac{(\xi-x)^{2}}{4}\bigg\}\,
		\exp\bigg\{-\dfrac{i}{\hbar}\,y\,\xi\bigg\}\,.
	\end{eqnarray}
	Let us also define a linear map $\mathbf{Q}$ from the space $\mathscr{K}(\mathbb{C})$ to the space of unbounded operators 
	acting on $L^{2}(\mathbb{R},\mathbb{C})$ by 
	\begin{eqnarray}
	1\mapsto Id,\,\,\,\,\,\,\,\,\,x\mapsto x\,,\,\,\,\,\,\,\,\,\,y\mapsto i\hbar\frac{\partial}{\partial x}\,,
	\,\,\,\,\,\,\,\,\frac{x^{2}+y^{2}}{2}\mapsto 
	-\frac{\hbar^{2}}{2}\frac{\partial^{2}}{\partial x^{2}}
	+\frac{1}{2}x^{2}-\Big(\frac{\hbar^{2}}{8}+\frac{1}{2}\Big)\,.
	\end{eqnarray}
	Observe that $\mathbf{Q}$ is ``essentially" the operator which quantizes the classical harmonic oscillator.
\begin{proposition}
	For all $f\in \mathscr{K}(\mathbb{C})$ and for all $z\in \mathbb{C}\,,$ we have :
	\begin{eqnarray}\label{eee bientot la fin j'espere}
		f(z)=\big\langle \Psi(z),\,\mathbf{Q}(f)\cdot \Psi(z)\big\rangle\,,
	\end{eqnarray}
	where $\langle\,,\,\rangle$ is the usual $L^{2}$-scalar product on $L^{2}(\mathbb{R},\mathbb{C})\,.$
\end{proposition}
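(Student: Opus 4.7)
The approach is to exploit the linearity of both sides in $f$ and verify the identity on a basis of $\mathscr{K}(\mathbb{C})$. Both maps $f \mapsto f(z)$ and $f \mapsto \langle \Psi(z), \mathbf{Q}(f)\cdot \Psi(z)\rangle$ are $\mathbb{R}$-linear in $f$, and $\mathbf{Q}$ has been defined explicitly on the basis $\{1,\,x,\,y,\,(x^2+y^2)/2\}$ of $\mathscr{K}(\mathbb{C})$, so it suffices to verify \eqref{eee bientot la fin j'espere} for each of these four functions.

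The key observation tying the computation to the statistical framework of the paper is that the modulus squared of $\Psi$ equals (up to the $(2\pi)^{-1/2}$ normalization absorbed in the Gaussian) the density $p(\xi;x)$ of the $\mathcal{N}(\mu,1)$ family evaluated at $\mu=x$; concretely,
\begin{eqnarray}
|\Psi(z)(\xi)|^{2}=\dfrac{1}{\sqrt{2\pi}}\,\textup{exp}\bigg\{-\dfrac{(\xi-x)^{2}}{2}\bigg\}=p(\xi;x)\,.
\end{eqnarray}
Thus $\langle\Psi(z),\mathbf{Q}(f)\cdot \Psi(z)\rangle$ will repeatedly reduce to moments of a Gaussian of mean $x$ and variance $1$, and I plan to use the three elementary facts $\int p(\xi;x)d\xi=1$, $\int (\xi-x)p(\xi;x)d\xi=0$, $\int (\xi-x)^{2}p(\xi;x)d\xi=1$.

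With this in hand the verifications proceed as follows. For $f=1$, the identity is $\langle \Psi,\Psi\rangle=1$, which is just the normalization of $\Psi$. For $f=x$, $\mathbf{Q}(x)$ is multiplication by the position variable $\xi$, so $\langle\Psi,\xi\cdot\Psi\rangle=\int \xi\, p(\xi;x)\,d\xi=x$. For $f=y$, I would compute $\partial_{\xi}\Psi=\Psi\cdot\bigl(-(\xi-x)/2-iy/\hbar\bigr)$, so $i\hbar\,\partial_{\xi}\Psi=\Psi\cdot\bigl(-i\hbar(\xi-x)/2+y\bigr)$, and the $(\xi-x)$ term integrates to zero against $|\Psi|^{2}$, leaving $y$. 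For $f=(x^{2}+y^{2})/2$, the computation is the main obstacle: one must compute $\Psi''$, which introduces cross terms of the form $(\xi-x)^{2}$, $(\xi-x)$, $y^{2}$, and a constant $-1/2$ coming from differentiating the Gaussian twice. Combined with the multiplication operator $\xi^{2}/2$ (for which one writes $\xi^{2}=(\xi-x)^{2}+2x(\xi-x)+x^{2}$), the $(\xi-x)$ moments vanish, the $(\xi-x)^{2}$ moments combine to produce a $\hbar^{2}/8$ plus a $1/2$, and the quadratic part gives $(x^{2}+y^{2})/2$. The additive constant $-(\hbar^{2}/8+1/2)$ in the definition of $\mathbf{Q}((x^{2}+y^{2})/2)$ is then precisely what is needed to cancel the spurious constants and leave $(x^{2}+y^{2})/2$.

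The hardest step is clearly the fourth one, but it is only a routine Gaussian integral once one notices that the additive constant in $\mathbf{Q}$ has been rigged to absorb the kinetic-energy ground-state shift of the harmonic oscillator; no conceptual difficulty remains beyond bookkeeping of the three elementary moments listed above.
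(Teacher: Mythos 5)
Your proposal is correct and is exactly the "direct calculation" the paper leaves to the reader: linearity reduces the claim to the four basis functions, $|\Psi(z)(\xi)|^{2}=p(\xi;x)$ turns each pairing into Gaussian moments, and the constants (in particular the $\hbar^{2}/8+1/2$ shift) cancel precisely as you describe. Nothing further is needed.
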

\begin{proof}
	By direct calculations.
\end{proof}
	Equation \eqref{eee bientot la fin j'espere} is the exact analogue of \eqref{eee je dois voir tilmann, il a pas l'air content...}, 
	but in an infinite dimensional context. Indeed, in \cite{Molitor-hydro} (work in progress), we regard the 
	space $\mathcal{D}:=\{\rho\,:\,M\rightarrow \mathbb{R}\,\vert\,\rho\,\,\textup{smooth}\,,\,\,
	\rho>0\,\,\,\textup{and}\,\,\,\int_{M}\,\rho(x)\cdot d\textup{vol}_{g}=1\}$ of smooth density probability 
	functions on a (compact) oriented Riemannian manifold $(M,g)$ as an infinite dimensional (Fr\'echet) manifold, 
	and we exhibit the analogues of the Fisher metric and 
	exponential connection on $\mathcal{D}\,,$ obtaining via Dombrowski's construction, an almost Hermitian structure on 
	$T\mathcal{D}$ which allows for an embedding $\mathbb{C}\hookrightarrow T\mathcal{D}\subseteq C^{\infty}(M,\mathbb{C})$ 
	similar to that of $S^{2}\hookrightarrow \mathbb{P}(\mathbb{C}^{n+1})$ in \S\ref{sss binomial}. 
	In this approach, the map $\Psi$ is obtained 
	by solving a simple partial differential equation related to a particular description of the tangent bundle of $\mathcal{D}$ 
	(see \cite{Molitor-hydro} for details).

\textbf{}\\\\

	\textbf{Acknowledgements.} It is a pleasure to thank Peter Heinzner, Shoshichi Kobayashi, Daniel Sternheimer, Hsiung Tze 
	and Tilmann Wurzbacher for many helpful discussions. I would like in particular to express my gratitude
	to Yoshiaki Maeda for having been such a great ``host professor" during my postdoctoral stay in Keio University ; I owe him a lot.

	This work was done with the financial support of the Japan Society for the Promotion of Science. 

\end{document}